\title{Hyperbolic Relaxation of  $k$-Locally Positive Semidefinite Matrices}
\author{Grigoriy Blekherman$^*$}
\address[*]{School of Mathematics, Georgia Institute of Technology}
\email[*]{greg@math.gatech.edu}
\author{Santanu S. Dey$^{**}$}
\address[**]{School of Industrial and Systems Engineering, Georgia Institute of Technology}
\email[**]{santanu.dey@isye.gatech.edu}
\author{Kevin Shu$^*$}
\email[***]{kshu8@gatech.edu}
\author{Shengding Sun$^*$}
\email[****]{ssun313@gatech.edu}
\thanks{Grigoriy Blekherman and Kevin Shu were partially supported by NSF grant DMS-1901950. Santanu S. Dey was supported by ONR under grant No. N4458-NV-ONR. Shengding Sun was supported by ARC-TRIAD Fellowship.}
\newcommand{\R}{\mathbb{R}}
\DeclareMathOperator{\Sym}{Sym}
\newcommand{\change}[1]{#1}
\DeclareMathOperator{\diag}{diag}
\newcommand{\tr}{\textup{Trace}}
\newtheorem{theorem}{Theorem}
\newtheorem{corollary}{Corollary}
\newtheorem{lemma}{Lemma}
\newtheorem{definition}{Definition}
\newtheorem{observation}{Observation}
\newtheorem{remark}{Remark}
\newtheorem{conjecture}{Conjecture}
\numberwithin{theorem}{section}
\begin{document}

% REQUIRED
\begin{abstract}
\change{A successful computational approach for solving large-scale positive semidefinite (PSD) programs is to enforce PSD-ness on only a collection of submatrices}. \change{For our study, we let $\mathcal{S}^{n,k}$ be the convex cone of $n\times n$ symmetric matrices where all $k\times k$ principal submatrices are PSD}. We call a matrix in this $k$-\emph{locally PSD}.  In order to compare \change{$S^{n,k}$} to the of PSD matrices, we study %the set of possible 
eigenvalues of $k$-{locally PSD} matrices.
The key insight in this paper is that \change{there is a convex cone $H(e_k^n)$ so that if $X \in \mathcal{S}^{n,k}$, then the vector of eigenvalues of $X$ is contained in $H(e_k^n)$. } The cone \change{$H(e_k^n)$ is} the hyperbolicity cone of the elementary symmetric polynomial $e^k_n$ (where $e_k^n(x) = \sum_{S \subseteq [n] : |S| = k} \prod_{i \in S} x_i$) with respect to the all ones vector. Using this insight, we are able to improve previously known upper bounds on the Frobenius distance between matrices in $\mathcal{S}^{n,k}$ and PSD matrices. We also study the quality of the convex relaxation $H(e^n_k)$. We first show that this relaxation is tight for the case of $k = n -1$, that is, for every vector in $H(e^n_{n -1})$ there exists a matrix in $\mathcal{S}^{n, n -1}$ whose eigenvalues are equal to the components of the vector. We then prove a structure theorem on nonsingular matrices in $\mathcal{S}^{n,k}$ all of whose $k\times k$ principal minors are zero, which we believe is of independent interest.
%We then prove a structure theorem that precisely characterizes the non-singular matrices in $\mathcal{S}^{n,k}$ whose vector of eigenvalues belongs to the boundary of $H(e^n_k)$. 
This result shows shows that for $1< k < n -1$ ``large parts" of the boundary of $H(e_k^n)$ do not intersect with the eigenvalues of matrices in $\mathcal{S}^{n,k}$. 
\end{abstract}

\maketitle

\smallskip
\noindent \textbf{Keywords.} Hyperbolicity cone, Positive semidefinite matrix, Eigenvalue bounds
\smallskip
\noindent \textbf{AMS Subject Classification.} 68Q25, 68R10, 68U05

\section{Introduction}
\subsection{$k$-Locally positive semidefinite matrices}

Positive semidefinite (PSD) matrices are of fundamental interest in a wide variety of fields, ranging from optimization~\cite{wolkowicz2012handbook} to physics~\cite{cavalcanti2016quantum}.
Formally, a symmetric matrix $X\in \Sym_n$ is PSD if and only if \[u^{\top}Xu \geq 0 \,\,\text{ for all }\,\, u \in \mathbb{R}^n.\]
The property of being positive semidefinite is very strong, and implies a large amount of structure in a matrix. For example, all the eigenvalues of a  PSD matrix are non-negative. Another important property of a PSD matrix is that all its principal submatrices are also PSD. There are various conceivable converses to this fact which fail to hold; for instance, even if all of the proper submatrices of a matrix are PSD, it is still possible for the matrix to have a negative eigenvalue. Nevertheless, one might be interested in a partial converse: if enough submatrices of a matrix are PSD, we should expect that the matrix is `close' to being PSD, in some sense. Such a result would help explain a phenomenon observed in various recent computational experiments, where the constraint of a matrix being PSD is relaxed to that of some principal submatrices being PSD. It has been empirically observed that the resulting relaxation has an optimal objective function value close to the optimal objective function of the original problem. See~\cite{kocuk2016strong,kocuk2017matrix,sojoudi2014exactness} for examples involving SDP relaxation of optimal electrical power flow problem and~\cite{baltean2018selecting,DeyWorking,qualizza2012linear} for examples involving SDP relaxation of box quadratic programs.

To formally understand the relaxation of enforcing positive semidefiniteness on submatrices, we investigate a class of matrices, where we  impose the conditions that all $k\times k$ principal submatrices of an $n\times n$ matrix are PSD. We will call such a matrix $k$-\emph{locally PSD}. This terminology is meant to suggest that we only check the PSD conditions locally on some small parts of the matrix rather than globally. Let 
\begin{align}
\mathcal{S}^{n,k}:= \{X \in \Sym_n\,|\, \textup{every } k \times k \textup{ principal submatrix of }X \textup{ is PSD} \}\label{def:s_n_k}
\end{align}
be the set of $k$-locally PSD matrices.  The set $\mathcal{S}^{n,k}$ is a closed  convex cone and its dual cone is the set of symmetric matrices with factor width $k$, defined and studied in~\cite{boman2005factor,Permenter2018,gouveia2019sums}. The set of symmetric matrices with factor width 2 is the set of scaled diagonally dominant matrices~\cite{boman2005factor,wang2019polyhedral}, \change{i.e.,} symmetric matrices $A$ such that $DAD$ is diagonally dominant for some positive diagonal matrix $D$. The paper~\cite{ahmadi2019dsos} uses scaled diagonally dominant matrices for constructing inner approximation of the \change{PSD} 
%\gb{PSD?} 
\change{cone} for use in solving polynomial optimization problems. See~\cite{hurwitz1891ueber, reznick1987quantitative,reznick1989forms} for related papers.

\change{For $X \in \Sym_n$, we let $\lambda(X) = (\lambda_1(X), \dots, \lambda_n(X))$, where $\lambda_1(X) \le \lambda_2 (X)\le \dots \lambda_n(X)$ are the eigenvalues of $X$ counting multiplicity. We say that a vector $\lambda \in \R^n$ is a vector of eigenvalues of $X$ if it can be obtained from $\lambda(X)$ by permuting its coordinates.}

\emph{Our main goal is to understand properties of eigenvalues of $k$-locally PSD matrices. } In particular, we would like to understand how as $k$  gets closer to $n$, the matrices in $\mathcal{S}^{n,k}$ become closer to PSD matrices in terms of eigenvalues.  This work extends (and improves) results in a recent paper~\cite{blekherman2020sparse}, by identifying a relaxation of the set of eigenvalues of $k$-locally PSD matrices.  This relaxation is based on the machinery of hyperbolic polynomials and hyperbolicity cones, which we discuss next.

\subsection{The \change{Hyperbolic  Relaxation}}
In order to motivate the the machinery of hyperbolic polynomials and corresponding hyperbolicity cones (which we formally define later), let us first construct a natural relaxation of the set of eigenvalues of matrices in $\mathcal{S}^{n,k}$. 

\change{
Given an $n\times n$ matrix $X$, recall the definition of the characteristic polynomial of $X$:
\[
p_{X}(t) = \det(X - tI) = \sum_{{\ell}=0}^n (-1)^{n-\ell}c_{\ell}^{n}(X)t^{n-\ell},
\]
where 
\[
c_{\ell}^{n}(X) = 
\sum_{\genfrac{}{}{0pt}{2}{S \subseteq [n]}{|S| = {\ell}}} \det(X|_S).
\]

Here, $X|_S$ is the principal submatrix of $X$ obtained by restricting $X$ to the rows and columns contained in $S$.
Notice that if $X \in S^{n,k}$, then for all $S \subseteq [n]$ with $|S| \le k$, $X|_S$ is PSD and in particular, then $\det(X|_S) \ge 0$. This implies that $c_{\ell}^n(X) \ge 0$ for $\ell \le k$.

Let us introduce the set
\[
H(c_k^n) = \{X \in \Sym_n : \forall \ell \le k,\;c_{\ell}^n(X) \ge 0\}.
\]
Our first observation is then that:
\begin{eqnarray}\label{eq:strawman}
\mathcal{S}^{n,k} \subseteq H(c_k^n).
\end{eqnarray}

The roots of $p_X$ are precisely the negatives of the eigenvalues of $X$, so we have that 
\[
p_X(t) = \prod_{i=1}^n (\lambda_i - t) = \sum_{\ell=0}^n (-1)^{n-\ell}e_k^n(\lambda_1, \dots, \lambda_n)t^{n-\ell},
\]
where $\lambda_1, \lambda_2, \dots \lambda_n$ are the eigenvalues of $X$ in any order, counting multiplicity, and $e^n_ k \in \mathbb{R}[x_1, \dots, x_n]$ is the elementary symmetric polynomial $$e_k^n(x) = \sum_{\genfrac{}{}{0pt}{2}{S \subseteq [n]}{|S| = {\ell}}}\prod_{i \in S} x_i.$$

Comparing coefficients of the $t^k$ terms, we see that for $0 \le k \le n$,
\[
c_k^n(X)= e_k^n(\lambda_1, \dots, \lambda_n).
\]

Combining our previous observations, we see that $X \in H(c_k^n)$ if and only if, $e_{\ell}^n(\lambda_1, \dots, \lambda_n) \ge 0$ for $\ell \le k$.

We will define the set
\[
H(e_k^n) = \{\lambda \in \R^n : \forall \ell \le k,\;e_{\ell}^n(\lambda) \ge 0\}.
\]

Combining these observations, we obtain the following:
\begin{observation}
$\mathcal{S}^{n,k} \subseteq H(c_k^n)$. Also, $\lambda = (\lambda_1, \dots, \lambda_n)$ is a vector of eigenvalues of some $X \in H(c_k^n)$ if only if $\lambda \in H(e_k^n)$. 
%$\mathcal{S}^{n,k} \subseteq H(c_k^n)$, and if $X \in H(c_k^n)$, and $\lambda = (\lambda_1, \dots, \lambda_n)$ is a vector of eigenvalues of $X$, then $\lambda \in H(e_k^n)$ 
%\gb{Shouldn't the second part be an if and only if?}.
\end{observation}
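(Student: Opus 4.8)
The plan is to assemble the three ingredients already isolated in the discussion above: the monotonicity of the PSD property under passing to principal submatrices, the identity relating the coefficients $c_\ell^n(X)$ of the characteristic polynomial to the elementary symmetric polynomials evaluated at the eigenvalues, and the fact that a diagonal matrix realizes any prescribed spectrum.

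First I would prove $\mathcal{S}^{n,k} \subseteq H(c_k^n)$. Fix $X \in \mathcal{S}^{n,k}$ and $\ell \le k$. For any $S \subseteq [n]$ with $|S| = \ell$, extend $S$ to a set $T \supseteq S$ with $|T| = k$ (possible since $\ell \le k \le n$); then $X|_S = (X|_T)|_S$ is a principal submatrix of the PSD matrix $X|_T$, hence PSD, hence $\det(X|_S) \ge 0$. Summing over all such $S$ gives $c_\ell^n(X) = \sum_{|S| = \ell}\det(X|_S) \ge 0$, so $X \in H(c_k^n)$.

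Next, for the spectral characterization, I would compare the two expansions of $p_X(t) = \det(X - tI)$ recorded above: expanding $p_X$ in powers of $t$ produces the coefficients $c_\ell^n(X)$, while factoring $p_X = \prod_i(\lambda_i - t)$ produces the coefficients $e_\ell^n(\lambda_1,\dots,\lambda_n)$ with the same signs. Matching the coefficient of $t^{n-\ell}$ for each $\ell$ yields $c_\ell^n(X) = e_\ell^n(\lambda(X))$. Hence $X \in H(c_k^n)$ if and only if $e_\ell^n(\lambda(X)) \ge 0$ for all $\ell \le k$, i.e.\ $\lambda(X) \in H(e_k^n)$; and since each $e_\ell^n$ is a symmetric polynomial, this membership does not depend on the ordering of the eigenvalues, so it holds for $\lambda(X)$ precisely when it holds for every vector of eigenvalues of $X$. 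This gives the ``only if'' direction. For the converse, given $\lambda \in H(e_k^n)$, set $X = \diag(\lambda) \in \Sym_n$; its vector of eigenvalues is $\lambda$, and $c_\ell^n(X) = e_\ell^n(\lambda) \ge 0$ for all $\ell \le k$, so $X \in H(c_k^n)$.

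There is no genuine obstacle here: the statement is a synthesis of the elementary facts established in the preceding paragraphs. The only points deserving a word of care are the extension-of-$S$ step, which is what upgrades ``$\det(X|_S)\ge 0$ for $|S|=k$'' to ``$c_\ell^n(X)\ge 0$ for all $\ell \le k$,'' and the remark that the symmetry of each $e_\ell^n$ makes the eigenvalue condition invariant under permuting coordinates, so that speaking of ``a vector of eigenvalues'' rather than the sorted vector $\lambda(X)$ causes no ambiguity.
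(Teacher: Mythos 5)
Your proof is correct and follows the same route the paper sketches in the discussion preceding the observation: compare the two expansions of $\det(X - tI)$ to identify $c_\ell^n(X)$ with $e_\ell^n(\lambda(X))$, and use the fact that principal submatrices of PSD matrices are PSD. You merely make explicit two points the paper leaves implicit---extending a set of size $\ell \le k$ to one of size $k$, and taking $X = \diag(\lambda)$ for the converse direction---so the approach is essentially identical.
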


The set $H(e_k^n)$ will turn out to be the hyperbolicity cone of the polynomial $e_k^n$ with respect to the all ones vector, and in particular will turn out to be invariant to permutations of the coordinates and convex~\cite{gaarding1951linear,guler1997hyperbolic}. We will refer to $H(e_k^n)$ as the \emph{hyperbolic relaxation} for the eigenvalues of $\mathcal{S}^{n,k}$ Similarly, $H(c_k^n)$ will be the hyperbolicity cone of $c_k^n$ with respect to the identity matrix. The cone $H(c_k^n)$ is well-known in the literature and is sometimes referred to as the $(n-k)^{th}$ Renegar derivative of the PSD cone~\cite{RenegarDer06}.%TODO: Cite something

By exploiting properties of these convex cones, we obtain bounds that indicate that if $X\in \mathcal{S}^{n,k}$, then in fact, $X$ is close to being PSD in a number of different norms.

As an example, notice that $e_n^n = \prod_{i=1}^n x_i$, and $H(e_n^n) = \R^n_+$, the nonnegative orthant. Similarly, $H(c_n^n)$ is the PSD cone, and we have that $H(e_n^n)$ is exactly the set of possible eigenvalue vectors for matrices in $H(c_n^n)$.

}

This observation motivates us to ask the following related questions that tie in with our goal of understanding properties of $\lambda(X)$ for $X \in \mathcal{S}^{n,k}$:
\begin{itemize}
\item \emph{Is it possible to obtain understanding of $\{\lambda(X)\,|\, X \in \mathcal{S}^{n,k}\}$ in comparison to eigenvalues of PSD matrices,  by studying properties of \change{$H(e_k^n)$}?}
\item \emph{How good is the approximation of the set $\{\lambda(X)\,|\, X \in \mathcal{S}^{n,k}\}$ by $H(e_k^n)$?}
\end{itemize}

In this paper, we answer `yes' to the first question by improving on results in~\cite{blekherman2020sparse} via the \change{hyperbolic} relaxation of the set of eigenvalues of matrices in $\mathcal{S}^{n,k}$. This motivates us to delve further into the second question and we verify various structural results that give a better understanding of relationship between the sets $\{\lambda(X)\,|\, X \in \mathcal{S}^{n,k}\}$ and $H(e_k^n)$. One particularly interesting result that we would like to highlight is a structure theorem for matrices in $\mathcal{S}^{n,k}$ all of whose principal $k\times k$ minors vanish (Theorem \ref{thm:struc}). This theorem has relations to previous results in linear algebra: Theorem 13 in \cite{thompson1968principal} and results of \cite{Oeding_2011}.

\subsection{Notation}
For a positive integer $n$, let $[n] := \{1, \dots, n\}$.  Let $\Sym_n$ denote the vector space of $n \times n$ symmetric matrices. Let $\mathcal{S}^n$ denote the cone of PSD matrices inside of $\Sym_n$. Note that $\mathcal{S}^{n,n} = \mathcal{S}^n$. If $M \in \mathcal{S}^n$, we write $M \succeq 0$. \change{We use $S^{n,k}$ to refer to the $k$-locally PSD matrices, which are defined above in equation \eqref{def:s_n_k}}. We will refer to $k$-locally PSD matrices for convenience as locally PSD matrices if $k$ is clear from context.
%For $S \subseteq [n]$, let $M|_S$ denote the submatrix of $M$ obtained by removing the rows and columns that are not in $S$.
%Let $\mathcal{S}^{n,k}$ denote the set of symmetric PSD matrices with the property that for all $S \subseteq [n]$ with $|S| = k$, we have $M|_S \succeq 0$. In this definition, $\mathcal{S}^{n} = \mathcal{S}^{n,n}$. We will refer to these as $k$-PSD matrices, or simply locally PSD matrices if $n$ and $k$ are clear from context.
An important example of a non-PSD matrix lying in $\mathcal{S}^{n,k}$ is given by
\begin{eqnarray}\label{eqn:g_n_k}
    G(n, k) = \frac{k}{k-1}I - \frac{1}{k-1} \vec{1} \vec{1}^{\top}.
\end{eqnarray}
\change{Here,} $\vec{1}$ denotes the all ones vector of dimension $n$. All diagonal entries of $G(n,k)$ are identically 1, and all off-diagonal entries are identically $-\frac{1}{k-1}$. Notice that all $k\times k$ \change{principal} minors of $G(n,k)$ vanish, but the matrix is nonsingular.

Given a matrix $M \in \mathcal{S}^{n,k}$ and a diagonal matrix $D$ with non-zero diagonal entries, observe that
\[
    DMD \in \mathcal{S}^{n,k}.
\]
We say that the matrix $ DMD$ is \emph{diagonally congruent} to $M$. By applying Sylvester's law of inertia to any submarix of $M$, the number of positive and negative eigenvalues are conserved for the same submatrix of a diagonally  congruent matrix. In particular, a principal submatrix of a diagonally congruent matrix is singular iff the same submatrix is singular in the original matrix.

%\change{For $X \in \Sym_n$, we let $\lambda(X) = (\lambda_1(X), \dots, \lambda_n(X))$, where $\lambda_1(X) \le \lambda_2 (X)\le \dots \lambda_n(X)$ are the eigenvalues of $X$ counting multiplicity. We say that a vector $\lambda \in \R^n$ is a vector of eigenvalues of $X$ if it can be obtained from $\lambda(X)$ by permuting its coordinates.} 
%\gb{Shiould we move this earlier and use it in our preceding discussion?}

The rest of the paper is organized as follows: Section~\ref{sec:main} lists all our main results and Section~\ref{sec:conclusion} concludes with some open questions. Then Section~\ref{sec:pre} presents background results needed for proving the main results. The remaining sections present our proofs of the main results.

\section{Main results}\label{sec:main}
\subsection{Bounds on minimum eigenvalues of matrices in $\mathcal{S}^{n,k}$}
The primary way we can measure the distance between a matrix in $\mathcal{S}^{n,k}$ and the cone of PSD matrices is by considering the smallest eigenvalue of such a matrix. Certainly, if the minimum eigenvalue of a matrix is nonnegative, then the matrix is positive semidefinite, and we will say that a (suitably normalized) matrix is close to being PSD if its minimum eigenvalue is close to being nonnegative. We show that if $k$ is sufficiently close to $n$, then the $k$-locally PSD matrices are close to the PSD matrices. 
%Precisely, every $k$-locally PSD matrix with trace 1 has minimum eigenvalue at least $\frac{k-n}{(k-1)n}$. The proof of this fact is itself very short, and uses the theory of real stable polynomials in an interesting way. Specifically, we will show that the set of eigenvalues of the class of $k$-locally PSD matrices lie in the \textbf{hyperbolicity cone} of an elementary symmetric polynomial, which is to be defined. The hyperbolicity cone will be defined below, but critically, it is basis invariant, making it easier to control the eigenvalues of matrices in this cone.
%
%Once we have this relaxation, we will also investigate how close this is to the true set of eigenvalues of the $k$-locally PSD matrices. We will show that the relaxation is exact when $k = n-1$. For all other cases, we then show some structure theorems that show that the eigenvalues of $k$-locally singular matrices are strictly contained in this hyperbolicity cone.
%
%As a final positive note, we note that we can precisely characterize the exact set of eigenvalues in the case of $4\times 4$ 2-locally PSD matrices whose eigenvalues lie on the boundary of the hyperbolicity relaxation. It is exactly the set of possible eigenvalues on the boundary of hyperbolicity relaxation that have at most one negative eigenvalue. We conjecture that a similar result holds for all $n$ and $k$, but so far, the only proof of this fact for $n=4$ and $k=2$ relies heavily on computer calculations that do not generalize well.
Let $\lambda_1(M)$ be the minimum eigenvalue of a matrix $M \in \mathcal{S}^{n,k}$. 
%This minimum eigenvalue measures the distance of $M$ to $\mathcal{S}^n$. 
Because $\lambda_1$ is 1-homogeneous, i.e., if $a \ge 0$, then $\lambda_1(aM) = a \lambda_1(M)$, we should try to compare $\lambda_1(M)$ for $M \in \mathcal{S}^{n,k}$ to other 1-homogeneous (also called positively homogeneous) quantities on $M$. 

%Formally, we will consider a class $\mathcal{F}$ of functions $F:\Sym_n \rightarrow \mathbb{R}$, which are $1$-homogeneous functions  such that $F$ is positive on $\mathcal{S}^{n,k}\setminus \{0\}$, and is basis invariants, i.e., the functions depends entirely of the eigenvalues of $M$.  Let $f:\mathbb{R}^n \rightarrow \mathbb{R}$ such that $f(\lambda(M)) = F(M)$. Then we will also assume $f$ is a symmetric convex function satisfying $f(u) \geq f(u^{-})$ where $u^{-}_i = \textup{min}\{0, u_i\}$.
%Some examples of functions $F \in \mathcal{F}$ are norms like Frobenium norm of $M$ and more generally the Schatten $p$-norms, $|M|_p = \sum_{i=1}^n |\lambda_i(M)|^p$ for $p \geq 0$. Other functions like the trace of $M$ is a non-norm function belonging to $\mathcal{F}$. Our first main result shows that for any $F\in \mathcal{F}$ the smallest minimal eigenvalue out of all matrices in $M\in \mathcal{S}^{n,k}$ normalized to have $F(M)=1$ is acheived by an appropriately rescaled version of $G(n,k)$.

Formally, let $\mathcal{F}$ be the class of functions $F:\Sym_n \rightarrow \mathbb{R}$ so that $F$ is a unitarily invariant matrix norm (thus, a norm depending entirely on
the eigenvalues) or the trace function. Examples of unitarily invariant matrix norms are the Schatten $p$-norms, \change{$\|M\|_p = \sqrt[p]{\sum_{i=1}^n |\lambda_i(M)|^p}$} for $p \geq 1$. \change{Note that the} Frobenius norm is a special case of the Schatten $p$-norm when $p =2$. \change{Also, recall that $G(n,k)$ is defined in equation \eqref{eqn:g_n_k}}

\begin{theorem}\label{thm:min_eigen}
Let $k \in \{2, \dots, n\}$. Let $F \in \mathcal{F}$ and let $\tilde{G}(n,k)=\frac{G(n,k)}{F(G(n,k))}$. For any $M\in \mathcal{S}^{n,k}$ with $F(M)=1$, the \change{minimum} eigenvalue of $M$ is at least as large as the \change{minimum} eigenvalue of $\tilde{G}(n,k)$, that is,
   $$\lambda_1(M) \geq \lambda_1(\tilde{G}(n,k))\quad \text{for all} \quad M \in \mathcal{S}^{n,k}\quad \text{such that} \quad F(M) = 1.$$
The bound on $\lambda_1(M)$ is tight since $\tilde{G}(n,k) \in \mathcal{S}^{n,k}$ achieves this bound.
\end{theorem}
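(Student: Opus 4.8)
The plan is to reduce the statement to a fact about the hyperbolicity cone $H(e_k^n)$, using the Observation that the eigenvalue vector of any $M \in \mathcal{S}^{n,k}$ lies in $H(e_k^n)$. Since every $F \in \mathcal{F}$ depends only on the eigenvalues of its argument and is $1$-homogeneous, the constraint $F(M) = 1$ together with $\lambda(M) \in H(e_k^n)$ lets us rephrase the problem as a minimization over a slice of the hyperbolicity cone: minimize $\lambda_1$ (the smallest coordinate) over $\{\lambda \in H(e_k^n) : f(\lambda) = 1\}$, where $f$ is the induced symmetric function on $\R^n$ corresponding to $F$. I would first record that this is a well-posed (compact, by the norm case; and the trace case handled separately) optimization problem, so a minimizer $\lambda^*$ exists.

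The core of the argument is to show that the minimizer can be taken to be (a permutation/scaling of) the eigenvalue vector of $G(n,k)$, whose eigenvalues are $\tfrac{k}{k-1}$ with multiplicity $n-1$ and $\tfrac{k}{k-1} - \tfrac{n}{k-1} = \tfrac{k-n}{k-1}$ with multiplicity $1$; note this vector lies on the boundary of $H(e_k^n)$ since $e_k^n$ vanishes there (all $k\times k$ principal minors of $G(n,k)$ vanish). The key steps, in order: (i) argue that at an optimum the smallest coordinate $\lambda_1^*$ is negative (otherwise $M$ is PSD and the bound is trivial, since $G(n,k)$ has a negative eigenvalue), so the active hyperbolic constraint binds — I expect to show $e_k^n(\lambda^*) = 0$, i.e. $\lambda^*$ lies on the boundary of $H(e_k^n)$; (ii) use the symmetry of $H(e_k^n)$ and of $f$ under coordinate permutations, together with a Schur-convexity / majorization or averaging argument, to show that among boundary points with a fixed value of the smallest coordinate and fixed $f$-value, the one where all the other $n-1$ coordinates are equal is extremal — this forces $\lambda^*$ to have the shape $(\mu, a, a, \dots, a)$ with $\mu \le a$; (iii) impose $e_k^n(\mu, a, \dots, a) = 0$ and solve: one computes $e_k^n(\mu,a,\dots,a) = a^{k-1}\big(\mu \binom{n-1}{k-1} + a\binom{n-1}{k}\big)$ (after normalizing $a$), which vanishes exactly when $\mu/a = -\tfrac{1}{k-1}(n-k)\cdot\tfrac{k}{\,?\,}$ — in any case the ratio matches that of the eigenvalues of $G(n,k)$; (iv) finally rescale so that $f(\lambda^*) = 1$, i.e. divide by $F(G(n,k))$, to recover $\lambda_1(\tilde G(n,k))$. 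Tightness is immediate because $G(n,k) \in \mathcal{S}^{n,k}$ (all its $k\times k$ principal submatrices are PSD, being scalar multiples of $G(k,k) \succeq 0$) and achieves equality.

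I expect the main obstacle to be step (ii): justifying rigorously that the extremal boundary point of $H(e_k^n)$, subject to fixed smallest coordinate and fixed (arbitrary unitarily invariant) norm value, has all remaining coordinates equal. The difficulty is that $\mathcal{F}$ ranges over \emph{all} unitarily invariant norms plus the trace, so I cannot exploit the specific form of $f$; I need a property of $H(e_k^n)$ itself. The cleanest route is probably to show that the boundary hypersurface $\{e_k^n = 0\} \cap \{x_1 \le x_2 \le \dots \le x_n\}$, near the relevant region, is ``swept out'' so that for any point on it one can move toward the diagonal direction $(0,1,\dots,1)$ in a way that weakly decreases every symmetric norm while keeping $\lambda_1$ fixed — an averaging/majorization argument on the last $n-1$ coordinates using that $H(e_k^n)$ is convex and permutation-invariant, hence closed under the Schur/doubly-stochastic averaging of any subset of coordinates, combined with the fact that every $F \in \mathcal{F}$ is Schur-convex in the eigenvalues. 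Alternatively, one can invoke directly the results on eigenvalue bounds from \cite{blekherman2020sparse} that the present theorem is stated to improve, and re-derive the sharp constant via the hyperbolic relaxation; but the self-contained majorization argument is preferable and is, I believe, where the real work lies.
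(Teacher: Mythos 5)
Your overall strategy matches the paper's: replace the eigenvalue set of $\mathcal{S}^{n,k}$ by the hyperbolic relaxation $H(e_k^n)$, minimize $\lambda_1$ over the normalized slice, reduce to the diagonal form $(\mu,a,\dots,a)$, and solve $e_k^n(\mu,a,\dots,a)=0$ to get $\mu/a = -(n-k)/k$. Your computation of $e_k^n(\mu,a,\dots,a) = a^{k-1}\bigl(\mu\binom{n-1}{k-1} + a\binom{n-1}{k}\bigr)$ in step (iii) is correct and recovers exactly the eigenvalue ratio of $G(n,k)$.

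The genuine gap is step (ii), which you flag yourself as the main obstacle, and your proposed route makes it harder than it is. You want to fix the smallest coordinate and the value of $f$, restrict to the boundary hypersurface $\{e_k^n = 0\}$, and then argue by majorization/Schur-convexity. But the boundary is not convex, and averaging or doubly-stochastic mixing on the last $n-1$ coordinates need not stay on $\{e_k^n = 0\}$, so the argument does not close as stated. The paper does the reduction the other way around, and this is what makes it work: first relax $f(\lambda)=1$ to $f(\lambda)\le 1$ and add the linear constraints $\lambda_1\le\lambda_i$, giving a convex, permutation-symmetric (in $\lambda_2,\dots,\lambda_n$) feasible region. For any optimal $\lambda^*$, averaging over permutations of the last $n-1$ coordinates produces a feasible point with the same objective $\lambda_1^*$ (using convexity of $f$ and of $H(e_k^n)$ together with permutation-invariance of both), so some optimum has $\lambda_2=\dots=\lambda_n$. \emph{Only then} does one argue the $H(e_k^n)$ constraint is active, and that argument is also cleaner than your step (i): dropping the $H(e_k^n)$ constraint from the two-variable problem makes it either attain its minimum at $(a,0,\dots,0)$ with $a<0$ (for a norm), which is infeasible for $H(e_k^n)$, or be unbounded below (for the trace); in either case the $H(e_k^n)$ constraint must bind, forcing $e_k^n(\lambda_1,\lambda_2,\dots,\lambda_2)=0$. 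Your assertion that $\lambda_1^*<0$ implies the hyperbolic constraint binds is not a valid inference on its own. So: same overall route, but the convex symmetrization should be done \emph{before} restricting to the boundary, and the activity of the boundary constraint deserves the explicit norm/trace case split rather than the appeal to ``otherwise $M$ is PSD.''
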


\change{
An immediate corollary of Theorem \ref{thm:min_eigen} in the case when $F$ is the trace function is the following:
\begin{corollary}
Let $k \in \{2, \dots, n\}$. For any $M \in \mathcal{S}^{n,k}$ such that $\tr(M) = 1$, we have
\[
    \lambda_{1}(M) \ge \frac{k-n}{n(k-1)}.
\]
\end{corollary}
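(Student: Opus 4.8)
The plan is to apply Theorem~\ref{thm:min_eigen} directly with $F = \tr$ and then to compute the two scalars $\tr(G(n,k))$ and $\lambda_1(G(n,k))$ explicitly, so that the stated bound falls out. First I would note that the trace is one of the admissible normalizing functions in the class $\mathcal{F}$, so Theorem~\ref{thm:min_eigen} does apply in this case, and that every diagonal entry of $G(n,k) = \frac{k}{k-1}I - \frac{1}{k-1}\vec{1}\vec{1}^{\top}$ equals $\frac{k}{k-1} - \frac{1}{k-1} = 1$; hence $\tr(G(n,k)) = n$ and $\tilde{G}(n,k) = \frac{1}{n}G(n,k)$.

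Next I would compute the spectrum of $G(n,k)$ by viewing it as a rank-one perturbation of a multiple of the identity: $\vec{1}\vec{1}^{\top}$ has eigenvalue $n$ on the line spanned by $\vec{1}$ and eigenvalue $0$ on its orthogonal complement, so $G(n,k)$ has the simple eigenvalue $\frac{k}{k-1} - \frac{n}{k-1} = \frac{k-n}{k-1}$ with eigenvector $\vec{1}$, together with the eigenvalue $\frac{k}{k-1}$ of multiplicity $n-1$. Since $k \le n$ we have $\frac{k-n}{k-1} \le 0 < \frac{k}{k-1}$, so $\lambda_1(G(n,k)) = \frac{k-n}{k-1}$, and consequently $\lambda_1(\tilde{G}(n,k)) = \frac{1}{n}\lambda_1(G(n,k)) = \frac{k-n}{n(k-1)}$.

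Finally, Theorem~\ref{thm:min_eigen} with $F = \tr$ yields $\lambda_1(M) \ge \lambda_1(\tilde{G}(n,k)) = \frac{k-n}{n(k-1)}$ for every $M \in \mathcal{S}^{n,k}$ with $\tr(M) = 1$, which is exactly the assertion. I do not expect any genuine obstacle here: all the substance is contained in Theorem~\ref{thm:min_eigen}, and the corollary is just a matter of (i) checking that $\tr \in \mathcal{F}$ and (ii) carrying out the elementary eigenvalue computation for the rank-one update $G(n,k)$ correctly.
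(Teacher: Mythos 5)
Your proposal is correct and follows exactly the route the paper intends: the corollary is stated as an immediate consequence of Theorem~\ref{thm:min_eigen} with $F=\tr$, and your explicit computation of $\tr(G(n,k))=n$ and of the spectrum of $G(n,k)$ (the simple eigenvalue $\tfrac{k-n}{k-1}$ plus $\tfrac{k}{k-1}$ with multiplicity $n-1$) is precisely the elementary verification the paper leaves to the reader. Nothing is missing, and the edge case $k=n$ is handled since the bound correctly degenerates to $0$.
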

}
%{\color{blue}We should list a few consequences of this using some specific functions? Also maybe mention improving theorem from previous paper.}

The proof of Theorem~\ref{thm:min_eigen}  is a direct application of the fact that $H\left(e_k^n \right)$ is a convex relaxation of the set of eigenvalues of matrices in $\mathcal{S}^{n,k}$. This allows us to write a convex relaxation of the optimization problem minimizing $\lambda_1(M)$ over $M \in \mathcal{S}^{n,k}$. The optimal solution of this convex relaxation is the bound obtained in the above \change{theorem}. 
\change{
\begin{remark}
The bound on $\lambda_1(M)$ presented in Theorem~\ref{thm:min_eigen} holds for $M \in H(e^n_k)$. Therefore, this bound can be used to provide upper bounds on the distance between the PSD cone and the Renegar derivative $H(c^n_k)$ of the PSD cone.
\end{remark}
}

%To see an application  of the above result, 
We can use Theorem~\ref{thm:min_eigen} to bound Frobenius distances of matrices in $\mathcal{S}^{n,k}$ from those in $\mathcal{S}^n$ (when we normalize the matrices using the Frobenius norm) as in the following result. 

\begin{corollary}\label{cor:improve} Let $\overline{\textup{dist}}(\mathcal{S}^{n,k},\mathcal{S}^n) =  \textup{max}_{A \in \mathcal{S}^{n,k}: \|A\|_F = 1} \left( \textup{min}_{G \in \mathcal{S}^n} \|A - G\|_F \right)$. Then $\overline{\textup{dist}}(\mathcal{S}^{n,k},\mathcal{S}^n)\leq \frac{(n-k)^{3/2}}{\sqrt{(n-k)^2+(n-1)k^2}}$. 
\end{corollary}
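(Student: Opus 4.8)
The plan is to derive Corollary~\ref{cor:improve} from Theorem~\ref{thm:min_eigen} applied with $F = \|\cdot\|_F$, the Frobenius norm. The first observation is that for a fixed matrix $A \in \Sym_n$, the closest PSD matrix in Frobenius norm is obtained by the standard ``PSD projection'': if $A$ has eigenvalues $\lambda_1 \le \dots \le \lambda_n$, then $\min_{G \in \mathcal{S}^n}\|A - G\|_F^2 = \sum_{i : \lambda_i < 0} \lambda_i^2$, since in the eigenbasis of $A$ the problem decouples and the optimal $G$ zeroes out the negative eigenvalues (and keeps the nonnegative ones). So $\overline{\textup{dist}}(\mathcal{S}^{n,k},\mathcal{S}^n)^2 = \max_{A \in \mathcal{S}^{n,k} : \|A\|_F = 1} \sum_{i : \lambda_i(A) < 0} \lambda_i(A)^2$.

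Next I would control the negative eigenvalues of any $A \in \mathcal{S}^{n,k}$ with $\|A\|_F = 1$. By Theorem~\ref{thm:min_eigen}, each individual eigenvalue satisfies $\lambda_i(A) \ge \lambda_1(\tilde G(n,k))$, where $\tilde G(n,k) = G(n,k)/\|G(n,k)\|_F$. One computes the eigenvalues of $G(n,k) = \tfrac{k}{k-1}I - \tfrac{1}{k-1}\vec 1\vec 1^\top$ explicitly: it has eigenvalue $\tfrac{k}{k-1} - \tfrac{n}{k-1} = \tfrac{k-n}{k-1}$ with multiplicity $1$ (eigenvector $\vec 1$) and eigenvalue $\tfrac{k}{k-1}$ with multiplicity $n-1$. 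Hence $\|G(n,k)\|_F^2 = \tfrac{(n-k)^2}{(k-1)^2} + (n-1)\tfrac{k^2}{(k-1)^2} = \tfrac{(n-k)^2 + (n-1)k^2}{(k-1)^2}$, and the minimum eigenvalue of $\tilde G(n,k)$ is $\tfrac{k-n}{k-1} \big/ \tfrac{\sqrt{(n-k)^2+(n-1)k^2}}{k-1} = \tfrac{-(n-k)}{\sqrt{(n-k)^2+(n-1)k^2}} =: -\mu$. So every eigenvalue of $A$ is at least $-\mu$, i.e. every negative eigenvalue lies in $[-\mu, 0)$.

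The final step combines these two facts with the constraint $\sum_i \lambda_i(A)^2 = \|A\|_F^2 = 1$. Writing $P$ for the set of indices with $\lambda_i(A) < 0$ and $p = |P|$, we have $\sum_{i \in P}\lambda_i(A)^2 \le 1$; but I want the sharper bound coming from the fact that a negative eigenvalue contributes at most $\mu^2$ while also eating into the total budget of $1$. Actually the cleanest route: since $|\lambda_i(A)| \le \mu$ for $i \in P$ and $\sum_{i \in P}\lambda_i(A)^2 \le 1$, and also (as in \cite{blekherman2020sparse}) one argues that at most $n-k$ eigenvalues can be negative — or more carefully, that the negative part is bounded because $e_\ell^n(\lambda) \ge 0$ for $\ell \le k$ forces at least $k$ eigenvalues (counted appropriately) to be nonnegative — one gets $\sum_{i \in P}\lambda_i(A)^2 \le (n-k)\mu^2 = \tfrac{(n-k)^3}{(n-k)^2 + (n-1)k^2}$. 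Taking square roots gives $\overline{\textup{dist}}(\mathcal{S}^{n,k},\mathcal{S}^n) \le \tfrac{(n-k)^{3/2}}{\sqrt{(n-k)^2 + (n-1)k^2}}$.

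The main obstacle is the bound on the \emph{number} of negative eigenvalues: showing that any $A \in \mathcal{S}^{n,k}$ has at most $n-k$ negative eigenvalues. This should follow from membership in $H(e_k^n)$ — the sign pattern of elementary symmetric functions $e_0, \dots, e_k$ being all nonnegative restricts how many negatives are possible (by Descartes-type reasoning on $p_A(t) = \prod(\lambda_i - t)$, or via interlacing/Renegar-derivative properties of $H(c_k^n)$). I would cite or reprove this; with it in hand the rest is the elementary optimization $\max \sum_{i\in P}\lambda_i^2$ subject to $|P| \le n-k$, $|\lambda_i| \le \mu$, $\sum \lambda_i^2 \le 1$, whose maximum is $\min\{1, (n-k)\mu^2\} = (n-k)\mu^2$ (one checks $(n-k)\mu^2 \le 1$ always holds). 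If instead the intended argument avoids the eigenvalue-count and uses only $\sum_{i\in P}\lambda_i^2 \le 1$ together with $|\lambda_i|\le \mu$, one still needs $(n-k)\mu^2 \le 1$ to be the binding constraint, so the count (or an equivalent) seems essential to get exactly the stated bound.
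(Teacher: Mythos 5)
Your proof follows the paper's argument essentially verbatim: PSD projection gives $\operatorname{dist}(A,\mathcal{S}^n)^2 = \sum_{\lambda_i<0}\lambda_i(A)^2$, then each negative eigenvalue is bounded in magnitude by $|\lambda_1(\tilde G(n,k))| = \frac{n-k}{\sqrt{(n-k)^2+(n-1)k^2}}$ via Theorem~\ref{thm:min_eigen}, and the number of negative eigenvalues is at most $n-k$. For that count, the paper uses the Cauchy interlacing route you tentatively mention (any $k\times k$ principal submatrix of $A$ is PSD, so interlacing forces $\lambda_{n-k+1}(A)\ge 0$), so you correctly identified the missing ingredient. One small correction: your parenthetical claim that $(n-k)\mu^2\le 1$ always holds is false (e.g.\ $n=10$, $k=2$ gives $(n-k)\mu^2 = 512/100 > 1$), but it is also unnecessary --- the corollary asserts only an upper bound, and $\sqrt{(n-k)\mu^2}$ is that upper bound whether or not it improves on the trivial bound of $1$.
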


This corollary improves upon Theorem 2 in a previous paper~\cite{blekherman2020sparse}. Our new result has a better constant factor, and applies to all regimes of $k$ and $n$.  
%\ss{The published version of previous paper has denominator $n^{3/2}$, but that was simplified from this denominator. Maybe we can append $\leq (\frac{n-k}{n})^{3/2}$ to this corollary. }

\subsection{Tightness of the relaxation $H\left(e_k^n \right)$ for the set of eigenvalues of matrices in $\mathcal{S}^{n,k}$}
%We analyze the behavior of the hyperbolic relaxation and $\mathcal{S}^{n,k}$ under the eigenvalue map.

%For every $n > k$, we have that the containment $\mathcal{S}^{n,k} \subseteq H(c^n_k)$ is strict. Interestingly though, for $k = n-1$, we this relaxation is tight on the level of eigenvalues in the following sense:

%

%We have seen that the converse of \eqref{eq:keyinsight} \gb{did we remove this reference?} holds when $k = n$. 
\change{We have seen that the vector of eigenvalues of matrices in $S^n_k$ is precisely $H(e^n_k)$ for $n = k$.} \change{We next show that this observation holds for two additional cases of $k$. }

\begin{theorem}\label{thm:k=n-1}
%    If if $M$ is any matrix in $H(c^n_{n-1})$, then there is some matrix $M' \in \mathcal{S}^{n,n-1}$ so that $\lambda(M) = \lambda(M')$.
\change{If $k$ is one of $1,n-1$ or $n$, and $x \in H(e_{k}^n)$,} then $x$ is the vector of eigenvalues of some matrix in $\mathcal{S}^{n,k}$.
%Let $u \in H(e_{n-1}^n)$. Then ther
\end{theorem}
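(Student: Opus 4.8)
The plan is to handle the three allowed values of $k$ separately. For $k=n$ the statement is immediate: $H(e_n^n)=\R^n_{\ge 0}$, so given $x\in H(e_n^n)$ the matrix $\diag(x)$ is PSD, hence lies in $\mathcal{S}^{n,n}$, and its eigenvalue vector is $x$. For $k=1$, a symmetric matrix lies in $\mathcal{S}^{n,1}$ precisely when all of its diagonal entries are nonnegative. Given $x\in H(e_1^n)$ we have $\sum_i x_i=e_1^n(x)\ge 0$, so, writing $\bar x=\tfrac1n\sum_i x_i\ge 0$, the constant vector $\bar x\,\vec 1$ is majorized by $x$; by the Schur--Horn theorem there is a symmetric matrix with eigenvalue vector $x$ and all diagonal entries equal to $\bar x\ge 0$, which therefore lies in $\mathcal{S}^{n,1}$.

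The substance of the argument is the case $k=n-1$. I would first establish two structural facts about any $x\in H(e_{n-1}^n)$, sorted as $x_1\le\cdots\le x_n$. Since $H(e_{n-1}^n)$ is the hyperbolicity cone of $e_{n-1}^n$ at $\vec 1$, membership means that $t\mapsto e_{n-1}^n(x+t\vec 1)=\tfrac{d}{dt}\prod_i(x_i+t)$ has all real roots, all nonpositive; by Rolle's theorem the largest root of this derivative lies in $[-x_2,-x_1]$, so $-x_2\le 0$, i.e.\ $x$ has at most one negative coordinate. Secondly, if $x_1<0$ and $x$ has some zero coordinate, let $z\ge 1$ be the number of zero coordinates and take $\ell=n-z\le n-1$; the only $\ell$-subset of $[n]$ avoiding all the zeros is the complement of the zero set, so $e_\ell^n(x)$ equals $x_1\cdot\prod_{x_i>0}x_i<0$, contradicting $e_\ell^n(x)\ge 0$. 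Hence either $x\in\R^n_{\ge 0}$ (handled by $\diag(x)$ as above) or, after permuting, $x_1<0<x_2\le\cdots\le x_n$.

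In the remaining case I would construct $X$ through its inverse. The key observation is that if $X$ is invertible with inertia $(1,0,n-1)$ then, by Cauchy interlacing, each principal submatrix $X|_{[n]\setminus i}$ has at most one negative eigenvalue and all other eigenvalues strictly positive, so $X|_{[n]\setminus i}\succ 0$ if and only if $\det(X|_{[n]\setminus i})>0$, i.e.\ if and only if $(X^{-1})_{ii}=\det(X|_{[n]\setminus i})/\det X<0$ (using $\det X<0$). Thus it suffices to build a symmetric matrix $Y$ with eigenvalue vector $(1/x_1,\dots,1/x_n)$ and all diagonal entries negative, and put $X=Y^{-1}$. When $e_{n-1}^n(x)>0$: from $e_{n-1}^n(x)=(\prod_i x_i)(\sum_i 1/x_i)$ and $\prod_i x_i<0$ we get $\bar\mu:=\tfrac1n\sum_i 1/x_i<0$; the constant vector $\bar\mu\,\vec 1$ is majorized by $(1/x_1,\dots,1/x_n)$, so Schur--Horn provides such a $Y$, necessarily invertible. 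When $e_{n-1}^n(x)=0$ I would pass to a limit: for small $t>0$ set $x^{(t)}=(x_1+t,x_2,\dots,x_n)$; since $(x_2,\dots,x_n)$ is strictly positive, $e_\ell^n(x^{(t)})=e_\ell^n(x)+t\,e_{\ell-1}^{n-1}(x_2,\dots,x_n)>0$ for all $1\le\ell\le n-1$, so $x^{(t)}\in H(e_{n-1}^n)$ with $e_{n-1}^n(x^{(t)})>0$; applying the previous construction yields $X^{(t)}\in\mathcal{S}^{n,n-1}$ with eigenvalue vector $x^{(t)}$, and since $\|X^{(t)}\|_F=\|x^{(t)}\|_2$ stays bounded, any subsequential limit lies in the closed cone $\mathcal{S}^{n,n-1}$ and has eigenvalue vector $x$.

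The main obstacle is locating the right reformulation of membership in $\mathcal{S}^{n,n-1}$: the passage to $X^{-1}$ converts the awkward requirement that every $(n-1)\times(n-1)$ principal submatrix be PSD into the transparent condition that $X^{-1}$ have nonpositive diagonal, which in turn reduces the problem to a Schur--Horn realization question for the reciprocals $1/x_i$. Once that is in hand, the structural reductions (at most one negative coordinate, the rest strictly positive) and the closure argument at the boundary $e_{n-1}^n(x)=0$ are routine.
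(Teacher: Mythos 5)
Your proof is correct, and the heart of it — passing to $X^{-1}$, applying Schur–Horn to the reciprocals $1/x_i$, and using Cauchy interlacing to convert the sign of the diagonal of the inverse into PSD-ness of the $(n-1)\times(n-1)$ blocks — is exactly the engine driving the paper's argument for $k=n-1$ as well, and the cases $k=1,n$ are handled identically. The noteworthy differences are tactical rather than strategic. First, the paper reduces at the outset to the boundary $e_{n-1}^n(x)=0$ by sliding $x$ down to $x-t\vec 1$; there $\sum_i 1/x_i=0$ exactly, so Schur–Horn produces an $L$ with \emph{zero} diagonal, and the $(n-1)\times(n-1)$ minors of $L^{-1}$ are singular but PSD by interlacing. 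You instead realize the strictly-feasible case $e_{n-1}^n(x)>0$ directly, building $Y$ with constant \emph{negative} diagonal $\bar\mu\vec 1$, and then reach the boundary by a compactness/limit argument; this costs you an extra continuity step that the paper's boundary reduction avoids, but in exchange you dispense with the explicit use of Lemma~\ref{lem:hyperbolic_boundary} to characterize the boundary. Second, for the fact that a point of $H(e_{n-1}^n)$ has at most one negative coordinate (with the remaining ones strictly positive when one is negative), the paper invokes the spectrahedral description of $H(e_{n-1}^n)$ from Sanyal's theorem, whereas your argument via Rolle/interlacing of $p$ and $p'$ together with the short combinatorial computation of $e_{n-z}^n(x)$ is self-contained and elementary — arguably a cleaner route to that intermediate lemma. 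Both versions are valid; yours trades a citation for a limit.
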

Since \change{$H(e_{k}^n)$} is a convex set \change{for all $k$}, in particular, Theorem~\ref{thm:k=n-1} implies that the set of possible vectors of eigenvalues for matrices in $\mathcal{S}^{n,n-1}$ is convex, although there does not seem to be an easy way to see this directly. 
%\delete{Formally, if $\bar{\lambda}(X)$ is the vector of eigenvalues of $X$ listed in non-decreasing order, then we define the set of all ``eigenvalues vectors"  of $X$ as the set of all the ways to permute the entries of the vector $\bar{\lambda}(X)$.} 
Therefore we have:
\begin{corollary}
    \change{The set of all vectors of eigenvalues of matrices} in $\mathcal{S}^{n,n-1}$ is convex.
\end{corollary}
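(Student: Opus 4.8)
The plan is to show that the set of eigenvalue vectors of matrices in $\mathcal{S}^{n,n-1}$ is \emph{exactly} the cone $H(e_{n-1}^n)$, and then to invoke the convexity of that cone. For the inclusion $\subseteq$, I would use the Observation from the introduction: since $\mathcal{S}^{n,n-1} \subseteq H(c_{n-1}^n)$, and a vector $\lambda$ is a vector of eigenvalues of some matrix in $H(c_{n-1}^n)$ if and only if $\lambda \in H(e_{n-1}^n)$, it follows immediately that every vector of eigenvalues of a matrix in $\mathcal{S}^{n,n-1}$ lies in $H(e_{n-1}^n)$. For the reverse inclusion $\supseteq$, I would apply Theorem~\ref{thm:k=n-1} in the case $k = n-1$, which states precisely that every $x \in H(e_{n-1}^n)$ is realized as the vector of eigenvalues of some matrix in $\mathcal{S}^{n,n-1}$. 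Combining the two inclusions identifies the set of eigenvalue vectors of $\mathcal{S}^{n,n-1}$ with $H(e_{n-1}^n)$.

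It then remains only to note that $H(e_{n-1}^n)$ is convex. This is already recorded in the introduction: $H(e_{n-1}^n)$ is the hyperbolicity cone of the elementary symmetric polynomial $e_{n-1}^n$ with respect to the all ones vector $\vec{1}$, and hyperbolicity cones are convex by Gårding's theorem (see the cited work of Gårding and of Güler). Hence the set of all vectors of eigenvalues of matrices in $\mathcal{S}^{n,n-1}$ is convex, proving the corollary.

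There is essentially no remaining obstacle at this stage: the entire content is packaged inside Theorem~\ref{thm:k=n-1}, and once that theorem is in hand the corollary is an immediate consequence. The point worth emphasizing is that this convexity does not appear to admit a direct, elementary argument: given $A, B \in \mathcal{S}^{n,n-1}$ and a target spectrum interpolating $\lambda(A)$ and $\lambda(B)$, there is no obvious way to construct a matrix in $\mathcal{S}^{n,n-1}$ with that spectrum, because $\mathcal{S}^{n,n-1}$ is not invariant under orthogonal conjugation and the midpoint $\tfrac{1}{2}(A+B)$ generally does not have the interpolated eigenvalues. Thus the natural route really is to pass through the hyperbolic relaxation, which is exactly what the proof above does.
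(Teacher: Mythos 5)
Your proof is correct and matches the paper's own reasoning exactly: the paper also identifies the set of eigenvalue vectors of $\mathcal{S}^{n,n-1}$ with $H(e_{n-1}^n)$ (one inclusion from the introductory Observation, the other from Theorem~\ref{thm:k=n-1}) and then invokes convexity of the hyperbolicity cone. Nothing further is needed.
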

%{\color{blue} GB:There should be a precise theorem here.}

On the other hand, we show that $\{\lambda(X)\,|\, X \in \mathcal{S}^{n,k}\}$ is strictly contained in $H(e_k^n)$ for $2 < k < n - 1$, \change{and thus completely characterize the tightness of this hyperbolic relaxation.}

In order to do so, we examine the boundary of the cone $H(e_k^n)$, and obtain a result which is of independent interest. \change{Recall that the boundary of $H(e_k^n)$ is precisely the set of points in the hyperbolicity cone on which the polynomial $e_k^n$ vanishes.}%  $\{x \in H(e_k^n) : e_k^n(x) = 0\}$, by Lemma \ref{lem:hyperbolic_boundary}.} \gb{That isn't right. There are more points where where the polynomial vanishes. We just need containment...}

Recall that if $M$ is a matrix, and $\lambda(M)$ is a vector of eigenvalues of $M$, we have that
\[
    e_k^n(\lambda(M)) = \sum_{S \subseteq [n] : |S| = k} \det(M|_S).
\]
Because of this, we see that if $e_k^n(\lambda(M)) = 0$, and $M\in \mathcal{S}^{n,k}$, then we have that $\det(M|_S) = 0$ for all subsets $S$ of size $k$. We formalize this notion:
\begin{definition}
We say that a matrix $M$ is $(n,k)$-{locally singular} if it lies in $\mathcal{S}^{n,k}$ and all of the $k\times k$ minors of $M$ are singular. 
\end{definition}
We see that for $M \in\mathcal{S}^{n,k}$,  $\lambda(M)$ is on the boundary of $H(e_k^n)$ if and only if $M$ is $(n,k)$-locally singular.
Sometimes $(n,k)$ is omitted and we say a matrix is locally singular if $n$ and $k$ are clear from context.

The simplest class of examples of locally singular matrices are matrices of rank less than $k$. In particular, rank 1 PSD matrices will be locally singular for any $n$ and $k$. A more interesting example of locally singular matrices are the $G(n,k)$ matrices \change{defined in equation (\ref{eqn:g_n_k})}. Not only are these matrices locally singular, but they are also non-singular. From this example we can construct an $n$-dimensional space of locally singular matrices by taking an arbitrary invertible diagonal matrix $D$, and considering
\[
    DG(n,k)D \in \mathcal{S}^{n,k},
\]
that is the set of matrices that are \emph{diagonally congruent} to $G(n,k)$. 
% {\color{blue}GB: diagonally congruent?} to $G(n,k)$. 
It follows from Sylvester's law of inertia, applied to the various submatrices of $G(n,k)$, that any matrix diagonally congruent to $G(n,k)$ is in fact locally singular and nonsingular. 

On the other hand, it is worth performing a quick dimension counting heuristic to estimate how many possible matrices satisfy these conditions: Each submatrix that is constrained to be singular imposes a single polynomial equation on the possible solution set. If $n-1 > k > 2$, then we see that the number of equations is $\binom{n}{k}$, which is in fact greater than the dimension of the space of $n\times n$ symmetric matrices, which is $\binom{n}{2}$. This indicates that solutions to this type of system should be somewhat ``uncommon". Therefore, it is perhaps not surprising that in the cases when $n-1 > k > 2$, we are able to show that  all of the locally singular matrices in $\mathcal{S}^{n,k}$, which are not singular, are diagonally congruent to $G(n,k)$.

\change{The following theorem formalizes this idea. Both the statement and the proof of this theorem seem closely related to Theorem 13 in \cite{thompson1968principal}.
More generally, the theorem can be viewed as giving semialgebraic relations between the various principal minors of a symmetric matrix. A complete characterization of the algebraic relations between the principal minors of a symmetric matrix was given in \cite{Oeding_2011}.}

\begin{theorem}\label{thm:struc}
%    If $n-1 > k > 2$ or $n = 4$ and $k = 2$, then any locally singular matrix in $\mathcal{S}^{n,k}$ that is nonsingular is diagonally congruent to $G(n,k)$. Equivalently, 
Let $n-1 > k > 2$ or $(n,k) = (4,2)$. Suppose that $M \in \mathcal{S}^{n,k}$, \change{$M$ is $(n,k)$-locally singular}, and $M$ is invertible. Then $M$ must be diagonally congruent to  $G(n,k)$.
\end{theorem}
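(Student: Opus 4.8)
The plan is to work with the inverse matrix $N = M^{-1}$ and exploit the duality between $k\times k$ minors of $M$ and $(n-k)\times (n-k)$ minors of $N$. Recall the Jacobi identity: for a subset $S \subseteq [n]$ with complement $S^c$, one has $\det(M|_S) = \det(M)\cdot \det(N|_{S^c})$, up to the standard sign which is positive for principal minors. Since $M$ is invertible and $(n,k)$-locally singular, this immediately shows $\det(N|_T) = 0$ for every $T \subseteq [n]$ with $|T| = n-k$. Thus $N$ is an invertible symmetric matrix, all of whose $(n-k)\times(n-k)$ principal minors vanish. The point of passing to $N$ is that since $n - 1 > k$, we have $n - k \geq 2$, and in the ``generic'' range $2 < k < n-1$ the dual size $n-k$ is strictly between $1$ and $n-1$ as well, so the problem is essentially self-dual and amenable to an inductive or local-rank analysis; the special case $(n,k)=(4,2)$ has dual parameters $(4,2)$ and is genuinely self-dual.

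The core of the argument is a local structure lemma about invertible symmetric matrices with all $m \times m$ principal minors vanishing (here $m = n-k \geq 2$). First I would establish that under these hypotheses the rank of every $m\times m$ principal submatrix is \emph{exactly} $m-1$: it cannot be full (the minor vanishes) and I would argue it cannot drop by two or more, for otherwise one could find an $(m-1)\times(m-1)$ or smaller principal submatrix that is singular, and then propagate this singularity — using that all the $m\times m$ minors containing it also vanish — to force a low-rank principal submatrix whose presence contradicts invertibility of $M$ (or of $N$), via an interlacing / rank-additivity argument on overlapping principal submatrices. Once every $m\times m$ principal block has corank exactly one, each such block has a one-dimensional kernel, and I would track how these kernels fit together on overlapping blocks. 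The key combinatorial step is to show that the kernel vectors are ``consistent'': on a block indexed by $S$, the kernel vector $v_S \in \R^S$ has all coordinates nonzero (a zero coordinate would again produce a singular smaller submatrix, contradicting the corank-one conclusion), and the ratios $v_S(i)/v_S(j)$ for $i,j \in S$ are independent of which block $S \ni i,j$ we chose — this is forced by comparing two overlapping blocks on their common indices. This yields a single global vector $d \in \R^n$ with nonzero entries such that, writing $D = \diag(d)$, the matrix $D N D$ (equivalently, a diagonal congruence of $M$) has every $m\times m$ principal submatrix annihilated by the all-ones vector. A symmetric matrix in which every $m\times m$ principal submatrix has $\vec 1$ in its kernel must have all diagonal entries equal to some $a$ and all off-diagonal entries equal to some $b$ with $a + (m-1)b = 0$; combined with invertibility this pins it down (up to positive scaling) to exactly $G(n,m)$ on the $N$-side, hence $M$ is diagonally congruent to $G(n,k)$ by dualizing back (using that $G(n,k)^{-1}$ is itself a scalar multiple of a diagonal-congruence translate of $G(n,n-k)$, which one checks directly from \eqref{eqn:g_n_k} via the Sherman–Morrison formula).

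The main obstacle I anticipate is the corank-exactly-one step and the propagation of singularity: controlling ranks of overlapping principal submatrices of an \emph{indefinite} symmetric matrix is delicate because eigenvalue interlacing does not by itself bound how the rank of a principal submatrix relates to that of a slightly larger one, and one must genuinely use the hypothesis that \emph{all} minors of the relevant size vanish together with global invertibility. I expect this is where the hypothesis $n - 1 > k > 2$ (as opposed to the boundary cases $k = n-1$ or $k = 2$ in higher $n$, and the sporadic exception $(n,k)=(4,2)$) is essential, because the counting heuristic in the text — $\binom{n}{k} > \binom n 2$ — is exactly what guarantees enough overlapping constraints to force the rigidity, whereas at the boundary there is slack and genuinely different examples (e.g. rank-deficient ones, or the tight relaxation phenomenon of Theorem \ref{thm:k=n-1}) appear. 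A secondary technical point is handling the case where some principal submatrix has a zero diagonal entry, which can obstruct the normalization by a diagonal congruence; I would dispose of this first by showing a zero diagonal entry in an invertible $(n,k)$-locally singular matrix is impossible in the stated range, again by reducing to a forbidden low-rank submatrix.
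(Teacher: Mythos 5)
Your high-level plan --- pass to $N = M^{-1}$ via Jacobi's identity, show each $(n-k)\times(n-k)$ principal block of $N$ has corank exactly one, patch the block-kernel vectors into a single global diagonal congruence, and conclude $N$ is congruent to $G(n,n-k)$ --- is a plausible outline and uses some of the same ingredients as the paper (the inverse-matrix step and Jacobi's formula do appear, but only in the paper's base case $n-k=2$, where the dual minors have sizes $2$ and $3$ and can be analyzed by hand). However, the proposal leaves unproven precisely the two steps that carry all the difficulty, and as stated I don't think either one goes through without substantial new ideas.

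First, the ``corank exactly one'' claim. You argue that if an $m\times m$ block $N|_S$ had corank $\ge 2$, interlacing would produce a singular $(m-1)\times(m-1)$ block, and then vanishing of the surrounding $m\times m$ minors would ``propagate'' to a low-rank submatrix contradicting invertibility. But a singular $(m-1)\times(m-1)$ principal minor of $N$ corresponds by Jacobi to a singular $(k+1)\times(k+1)$ principal minor of $M$, and membership in $\mathcal{S}^{n,k}$ says nothing about minors of size $k+1$, so there is no immediate contradiction. Nor does a naive rank count work: adding one index to a rank-$(m-2)$ block can raise the rank by two, so iterating the argument over all of $[n]$ only bounds the rank of $N$ by roughly $2n - m - 2$, which exceeds $n$ precisely in the regime $k>2$ you care about. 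Controlling ranks of overlapping principal submatrices of an \emph{indefinite} symmetric matrix is genuinely delicate, and the paper avoids this issue entirely: for $n-k=2$ it instead proves a sign lemma (Lemma 6.3, by induction via Schur complements) giving $\det(M)<0$, all $(n-1)\times(n-1)$ minors strictly negative, and all $(n-3)\times(n-3)$ minors strictly positive, which after inversion makes every $2\times 2$ minor of $N$ vanish and every $3\times 3$ minor strictly negative --- from there the $\pm 1$ normalization is forced.

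Second, the kernel-consistency claim. You assert that for overlapping blocks $S$ and $T$ the kernel vectors $v_S$ and $v_T$ must agree (up to scale) on $S\cap T$, ``forced by comparing the two blocks on their common indices.'' But $v_S$ is determined by $N|_S$ alone, $v_T$ by $N|_T$ alone, and two symmetric matrices sharing a common principal block can have one-dimensional kernels whose restrictions to the overlap are not proportional. The compatibility must come from the global vanishing of \emph{all} $\binom{n}{m}$ minors at once, and that argument is exactly what's missing; this is where the $\binom{n}{k}>\binom{n}{2}$ overdetermination heuristic needs to be turned into an actual proof. The paper instead finishes by a double induction on $(n,k)$ using Schur complements: it proves $k=3$ separately (by an explicit $5\times 5$ case analysis plus, for $n=6$, a clever Ramsey-number argument ruling out the subcase where all diagonal entries of $M^{-1}$ vanish), and then in the general inductive step passes to Schur complements $M\setminus\{i\}\in\mathcal{S}^{n-1,k-1}$, which are NLS and hence by induction congruent to $G(n-1,k-1)$; from this it deduces that all $(n-1)\times(n-1)$ principal submatrices of $M$ are NLS in $\mathcal{S}^{n-1,k}$ and pieces the diagonal congruences together one index at a time. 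In short: your proposal identifies the right target (a global diagonal congruence making every block have $\vec{1}$ in its kernel) but the two load-bearing lemmas are only sketched and, in the form stated, have counterexamples or circularities; the paper's proof is organized along entirely different lines (sign analysis of minors, Schur complement induction, Ramsey) precisely to sidestep these obstacles. You also do not address the sporadic case $(n,k)=(4,2)$ or the possibility of zero diagonal entries at the start; the paper handles the former by direct enumeration of $64$ sign patterns and the latter via the $2\times 2$ minor condition, and both need to be dealt with explicitly.
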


Note that this immediately implies that there are points on the boundary of $H(e_k^n)$ which are not the eigenvalues of any matrix in $\mathcal{S}^{n, k}$: since $G(n,k)$ has only one negative eigenvalue, any matrix diagonally congruent to $G(n,k)$ has at most one negative eigenvalue, and there are points on the boundary of $H(e_k^n)$ with as many as $n - k$ negative entries and no zero entries.

\change{When $k=2$ and $n=4$, locally singular matrices in $\mathcal{S}^{4,2}$ are diagonally congruent to a symmetric matrix with diagonal entries identically one and $\pm 1$ off-diagonal entries. We can numerically check that a such matrix has at most one negative eigenvalue. Thus by Cauchy's Interlacing Theorem, for any $n>4$ and $k=2$, any locally singular matrix in $\mathcal{S}^{n,2}$ can have at most $n-3$ negative eigenvalues, whereas the boundary of $H(e_2^n)$ contains points with as many as $n-2$ negative eigenvalues. Thus we get the following corollary.}

%\begin{corollary}
%If $n-1 > k > 2$ or $n = 4$ and $k = 2$, then the set of possible eigenvalue vectors for matrices in $\mathcal{S}^{n,k}$ is strictly contained in $H(e_k^n)$.
%\end{corollary}

%{\color{red}Should we need to modify the above corollary to the following --- based on the discussion in blue above?}
\begin{corollary}
If $n-1 > k \geq 2$, then the set of possible eigenvalue vectors for matrices in $\mathcal{S}^{n,k}$ is strictly contained in $H(e_k^n)$.
\end{corollary}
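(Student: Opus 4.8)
The plan is to assemble pieces that are already essentially in place. First I would pin down the spectrum of $G(n,k)$: since $G(n,k) = \frac{k}{k-1}I - \frac{1}{k-1}\vec{1}\vec{1}^{\top}$, it has eigenvalue $\frac{k}{k-1} > 0$ with multiplicity $n-1$ on $\vec{1}^{\perp}$ and eigenvalue $\frac{k-n}{k-1} < 0$ with eigenvector $\vec{1}$, so $G(n,k)$ is invertible with exactly one negative eigenvalue. Since $DG(n,k)D = D^{\top}G(n,k)D$ for a diagonal $D$, Sylvester's law of inertia shows every matrix diagonally congruent to $G(n,k)$ (with $D$ invertible) also has exactly one negative eigenvalue. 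Combining this with Theorem~\ref{thm:struc} yields: whenever $n-1 > k > 2$ or $(n,k) = (4,2)$, every invertible $(n,k)$-locally singular matrix in $\mathcal{S}^{n,k}$ has exactly one negative eigenvalue.

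Next I would produce an explicit point $v$ on the boundary of $H(e_k^n)$ having $n-k$ negative coordinates and no zero coordinate. Take $v = v(\epsilon) = (\underbrace{1,\dots,1}_{k-1},\, a(\epsilon),\, \underbrace{-\epsilon,\dots,-\epsilon}_{n-k})$. Because $e_k^n$ is multilinear and its partial derivative in the $a$-coordinate tends to $e_{k-1}^{n-1}(1,\dots,1,0,\dots,0) = 1 \ne 0$ as $\epsilon \to 0$, one can solve $e_k^n(v(\epsilon)) = 0$ for $a(\epsilon)$, and a short computation gives $a(\epsilon) = (n-k)\epsilon + O(\epsilon^2) > 0$ for small $\epsilon > 0$. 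For small $\epsilon$ the remaining constraints satisfy $e_{\ell}^n(v(\epsilon)) \to \binom{k-1}{\ell} > 0$ for $\ell \le k-1$, so $v(\epsilon) \in H(e_k^n)$, and it lies on the boundary since $e_k^n(v(\epsilon)) = 0$. (Alternatively one may simply invoke the remark following Theorem~\ref{thm:struc}.) With this in hand, the cases $n-1 > k > 2$ and $(n,k) = (4,2)$ are immediate: if $v(\epsilon)$ were the eigenvalue vector of some $M \in \mathcal{S}^{n,k}$, then $M$ would be invertible (no zero eigenvalue), and from $e_k^n(\lambda(M)) = \sum_{|S|=k}\det(M|_S) = 0$ with every summand nonnegative, $M$ would also be $(n,k)$-locally singular; hence $M$ would have exactly one negative eigenvalue, contradicting that $v(\epsilon)$ has $n-k \ge 2$ negative coordinates.

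The remaining case, $k = 2$ with $n \ge 5$, is where Theorem~\ref{thm:struc} no longer applies and which I expect to be the main obstacle; here I would argue directly that any $(n,2)$-locally singular $M$ has at most $n-3$ negative eigenvalues. If some $M_{ii} = 0$, then $M_{ij}^2 \le M_{ii}M_{jj} = 0$ forces the whole $i$-th row and column to vanish, and deleting them reduces to the $(n-1,2)$ case with the same nonzero spectrum, so by induction it suffices to treat $M$ with all diagonal entries positive. Scaling by $D = \diag(M_{ii}^{-1/2})$ produces a diagonally congruent $N$ with unit diagonal and $\pm 1$ off-diagonal entries; each $4\times 4$ principal submatrix of $N$ is again of this form, and a finite numerical check over the $2^{6}$ sign patterns shows every such matrix has at most one negative eigenvalue. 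By Cauchy's interlacing theorem, a symmetric $n\times n$ matrix with $p$ negative eigenvalues has every $m\times m$ principal submatrix with at least $p - (n-m)$ negative eigenvalues; taking $m = 4$ gives $p - (n-4) \le 1$, i.e. $p \le n-3$. Finally, the boundary point $v(\epsilon)$ above with $k = 2$ has $n-2 > n-3$ negative coordinates and no zeros, hence cannot be the eigenvalue vector of any $M \in \mathcal{S}^{n,2}$. This completes the proof; everything except this last case follows formally from Theorem~\ref{thm:struc} and the structure of $G(n,k)$, so the care is concentrated in the reduction to $4\times 4$ submatrices and the interlacing count of negative eigenvalues.
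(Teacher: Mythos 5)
Your proof is correct and follows essentially the same route as the paper: use Theorem~\ref{thm:struc} together with the inertia of $G(n,k)$ for $2<k<n-1$ and $(n,k)=(4,2)$, and for $k=2$, $n\ge 5$ reduce to the $4\times 4$ numerical check and conclude via Cauchy interlacing. You supply some details the paper leaves implicit---notably the explicit boundary point $v(\epsilon)$ with $n-k$ negative and no zero coordinates, the reduction when a diagonal entry vanishes, and the precise interlacing count $p-(n-m)$---and all of these are sound.
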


%On the other hand, we show a structure theorem for $2 < k < n-1$, which implies that all nonsingular matrices in $\mathcal{S}^{n,k}$, whose eigenvalues lie on the boundary of the hyperbolicity relaxation, will be essentially be equivalent to $G(n,k)$ up to a rescaling. In particular, there will be points on the boundary of the hyperbolicity relaxation that cannot be realized as the eigenvalues of a matrix in $\mathcal{S}^{n,n-1}$. 
%It is still open whether or not the corresponding set of possible eigenvalues is convex.
%{\color{blue} GB:There should be a precise theorem here.}
%
%
%Finally, we will describe a computational technique that can prove that the eigenvalues of matrices in $\mathcal{S}^{4,2}$ that lie on the boundary of the hyperbolicity relaxation are those that have at most 1 negative eigenvalue.
%{\color{blue} GB:There should be a precise theorem here.}
%\gb{Why was this a separate section and not subsection?}
\subsection{Eigenvalues of matrices in $\mathcal{S}^{4,2}$ whose eigenvalues lie on the boundary of $H(e^{4}_2)$}
Theorem \ref{thm:struc} implies that if $X$ is a nonsingular matrix in $\mathcal{S}^{n,k}$ whose eigenvalues lie on the boundary of $H(e^{n}_k)$, then $X$ is in fact diagonally congruent to the matrix $G(n,k)$. Because $G(n,k)$ has only one nonnegative eigenvalue, Sylvester's law of inertia \cite{Horn1985matrix} implies that a matrix of this form has at most one negative eigenvalue.

The next lemma is a converse to the previous observation when $n = 4$ and $k = 2$.
\begin{lemma}
If $\lambda \in H(e^4_2)$, so that $e^4_2(\lambda) = 0$ and $\lambda$ has at most one negative eigenvalue, then $\lambda$ is a vector of eigenvalues for matrix which is diagonally congruent to $G(4,2)$.
\end{lemma}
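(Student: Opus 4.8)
The plan is to produce, from a vector $\lambda \in H(e^4_2)$ with $e^4_2(\lambda) = 0$ and at most one negative entry, an explicit matrix $M$ diagonally congruent to $G(4,2)$ whose eigenvalue vector is $\lambda$. Since $G(4,2) = 2I - \vec{1}\vec{1}^\top$, any matrix diagonally congruent to it has the form $D(2I - \vec{1}\vec{1}^\top)D = 2D^2 - (D\vec{1})(D\vec{1})^\top$ for an invertible diagonal $D$; equivalently, writing $v = D\vec{1}$ and letting $d_i = v_i$, we get $M = 2\,\diag(d_1^2,\dots,d_4^2) - vv^\top$. So the whole family is parametrized by the vector $v \in \R^4$ of nonzero entries (with $d_i^2 = v_i^2$), and I must show that the spectrum of $M(v) = 2\,\diag(v_i^2) - vv^\top$ ranges over exactly the set of $\lambda \in H(e^4_2)$ with $e^4_2(\lambda) = 0$ and at most one negative coordinate, as $v$ ranges over $(\R\setminus\{0\})^4$ (up to sign flips of coordinates, which do not change $M$).

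First I would compute the characteristic polynomial of $M(v)$. Because $M(v)$ is a diagonal matrix minus a rank-one update, $\det(M(v) - tI)$ can be written via the matrix determinant lemma: $\det\big(2\,\diag(v_i^2) - tI - vv^\top\big) = \big(\prod_i (2v_i^2 - t)\big)\big(1 - \sum_i \frac{v_i^2}{2v_i^2 - t}\big)$. From this I can read off the elementary symmetric functions of the eigenvalues; in particular $e^4_2(\lambda)$ should come out to be a fixed multiple of $\big(\sum_i v_i^2\big)^2 - \text{(something)}$ — I expect it to vanish identically, reflecting the fact that every $G(4,2)$-congruent matrix is $(4,2)$-locally singular (this is already asserted in the text via Sylvester's law). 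That gives the "forward" containment for free and tells us the relevant parameter count: $v$ has $4$ degrees of freedom (modulo scaling the eigenvalue vector is $3$-dimensional, and the boundary hypersurface $\{e^4_2 = 0\}$ inside $H(e^4_2)$ is also codimension $1$ in $\R^4$, hence $3$-dimensional), so a dimension count is consistent with surjectivity onto the claimed piece of the boundary.

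For the "backward" direction — the actual content of the lemma — I would argue as follows. Given $\lambda$ on the boundary with at most one negative entry, I want to solve $\operatorname{spec}(M(v)) = \lambda$ for $v$. One clean route: the eigenvalues of $M(v) = 2\,\diag(v_i^2) - vv^\top$ and the values $2v_i^2$ interlace (this is the standard secular-equation / Cauchy interlacing picture for a rank-one perturbation), and the eigenvalues are precisely the roots of the secular equation $f(t) := 1 - \sum_i \frac{v_i^2}{2v_i^2 - t} = 0$. So I would try to recover, from $\lambda$, candidate values $a_i = 2v_i^2$ (positive reals) and check the secular equation matches. Alternatively — and I suspect this is cleaner for a $4\times 4$ case — I would use Theorem~\ref{thm:struc} itself in reverse together with a connectedness/degree argument: show the continuous map $v \mapsto \lambda(M(v))$ from the parameter space to the relevant boundary stratum of $H(e^4_2)$ is proper and locally a homeomorphism (compute the Jacobian is nonsingular generically, using that the $v_i$ are distinct forces distinct $2v_i^2$ hence simple eigenvalues), and that its image is open, closed, and nonempty in the connected manifold of boundary points with $\le 1$ negative coordinate, hence onto. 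Handling the degenerate boundary cases (coordinates of $\lambda$ equal, or a zero coordinate, i.e.\ $M(v)$ singular) would be done by taking limits, though one must check the limiting $M$ is still genuinely $G(4,2)$-congruent (the diagonal $D$ stays invertible) — or simply note a zero eigenvalue would make $M$ singular, outside the lemma's relevant use, and treat it separately by a small perturbation.

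The main obstacle I anticipate is the surjectivity/solvability step: showing that \emph{every} admissible $\lambda$ actually arises, rather than just that the dimensions match. Concretely, one must rule out that the image of $v \mapsto \operatorname{spec}(M(v))$ is a proper relatively-closed subvariety of the $\le 1$-negative-entry part of $\{e^4_2 = 0\}$. The secular-equation approach reduces this to an explicit (but small) algebraic solvability question in $4$ variables, which for the $4\times 4$ case should be tractable by direct manipulation — e.g.\ by using the interlacing to pin down which $a_i$-interval each eigenvalue lives in and then solving a linear system for the residues $v_i^2/\prod_{j\ne i}(a_i - a_j)$-type quantities — but getting the sign and nonvanishing conditions (all $v_i^2 > 0$, all $a_i$ distinct or the right coincidences) to line up exactly with "$e^4_2(\lambda)=0$ and $\le 1$ negative entry" is where the real care is needed.
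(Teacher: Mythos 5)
Your setup is sound: the parametrization $M(v) = 2\,\diag(v_i^2) - vv^{\top}$, the secular equation $1 = \sum_i v_i^2/(2v_i^2 - t)$, and the observation that $e_2^4(\lambda(M(v))) \equiv 0$ because all $2\times 2$ principal minors of $DG(4,2)D$ vanish, are all correct and all appear (in equivalent form) in the paper's own development. The divergence is everything after that: you propose to close the argument either by directly solving the secular equation, or by a proper-map/degree argument, whereas the paper translates the whole problem into a statement about real-rooted quartics (via the characteristic polynomial) and then proves the resulting $\forall\exists$ statement about polynomial inequalities by machine quantifier elimination. The authors even remark that they do not know a proof avoiding computation, so the step you flag as "where the real care is needed" is precisely the step neither you nor the paper resolves by hand.

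As written, the proposal has a genuine gap at that step, and the degree-theory variant has an additional technical flaw. The map $v \mapsto \lambda(M(v))$ from $(\R\setminus\{0\})^4$ to $\R^4$ has image inside the $3$-dimensional hypersurface $\{e_2^4 = 0\}$, so its Jacobian as a map into $\R^4$ is singular everywhere; "compute the Jacobian is nonsingular generically" is not available in that form. One must first pass to a codimension-one slice (fix $v_1 = 1$, say, or projectivize by the common scaling $v \mapsto cv$, $\lambda \mapsto c^2\lambda$) to get a map between $3$-manifolds, and then verify local invertibility, properness, and connectedness of the target stratum (points of $\{e_2^4=0\}\cap H(e_2^4)$ with exactly one strictly negative and three strictly positive coordinates). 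None of this is carried out, and the boundary degenerations you defer (a zero eigenvalue forces $M$ singular, hence not diagonally congruent to the nonsingular $G(4,2)$; repeated eigenvalues collapse the interlacing structure) are exactly where such arguments typically break. In short, the reduction and dimension count are right, but the surjectivity — which is the entire content of the lemma — remains unproved; you would need to either actually solve the residue system with the sign constraints, or complete the degree argument on the correct slice, and the fact that the paper resorted to Mathematica-based quantifier elimination is a warning that this may be genuinely painful to do by hand.

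One smaller point: the lemma as stated in the paper says "at most one negative entry," but if $\lambda$ has no negative entry and $e_2^4(\lambda) = 0$ then at least three coordinates vanish, the matrix is singular, and it cannot be diagonally congruent to the invertible $G(4,2)$; the paper's own appendix lemma on "good roots" correctly strengthens this to "exactly one negative root and no zero roots." Your proof sketch implicitly assumes the nondegenerate case, which is the right reading, but worth making explicit.
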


The idea of the proof is to use a characterization of the coefficients of real rooted polynomials to reduce the problem to proving that there exist real rooted polynomials having certain properties. These properties can be described entirely in terms of polynomial inequalities, so we can use algorithms for quantifier elimination over real closed fields to solve this problem. We do not know of a proof of this result that does not rely on computational methods.

\section{Conclusions and open questions}\label{sec:conclusion}
The key insight in this paper is the observation that $H(e^n_k)$, i.e., the hyperbolicity cone of the elementary symmetric polynomial \change{$e^n_k$,} is a convex relaxation of the set of the eigenvalue of matrices in $\mathcal{S}^{n,k}$. Using this insight, we are able to improve upper bounds on the distance of the matrices in $\mathcal{S}^{n,k}$ from PSD matrices $\mathcal{S}^n$ given in~\cite{blekherman2020sparse}. The next question that was considered is how good is the relaxation $H(e^n_k)$: We first show that this relaxation (apart from the trivial case of $n =k$) is tight for the case of $k = n -1$. Indeed, in this case, we are able to show that $H(e^n_{n -1})$ is exactly the set of eigenvalues of matrices in $\mathcal{S}^{n, n -1}$. However, in general we prove that if $\lambda(M)$ belongs to the boundary of $H(e^n_k)$ and $M \in \mathcal{S}^{n,k}$, then either $M$ is non-singular or $M$ is diagonally congruent to $G(n,k)$. Since there are points on the boundary of $H(e_k^n)$ with as many as $n - k$ negative entries and no zero entries, this shows that ``large parts" of the boundary of $H(e_k^n)$ \change{do} not intersect with the eigenvalues of matrices in $\mathcal{S}^{n,k}$.

There are many interesting open questions. As discussed above, we have shown that the set of eigenvalues of matrices in $\mathcal{S}^{n, n-1}$ is convex. It was recently shown in \cite{Khaz} that the set of eigenvalue vectors of matrices in $\mathcal{S}^{4,2}$ is not convex, but it is still an open question for all other values of $k < n -1$. Another question vis-\'a-vis the structure theorem is to classify singular matrices in $\mathcal{S}^{n,k}$ that lie on the boundary of $H(e^n_k)$.  Finally, instead of enforcing PSD-ness on all submatrices, we can enforce PSD-ness of a smaller set of submatrices. Are there similar relaxations like $H(e^n_k)$ for specially structured \change{collections} of submatrices?
    
\section{Preliminaries}\label{sec:pre}
\subsection{Introduction to \change{hyperbolic polynomials and hyperbolicity cones}}
Hyperbolic multivariate polynomials are a rich collection of polynomials with connections to convex optimization, combinatorics and theoretical computer science.  We give a brief description of the important properties of hyperbolic polynomials here. We say that a polynomial $p \in \R[x_1 ,\dots, x_n]$ is \emph{hyperbolic} with respect to a fixed vector $v$ if $p(v)>0$, and for any fixed $x \in \R^{n}$, the univariate polynomial $p(x - tv) \in \R[t]$ has only real roots. 
An important example of this comes from the determinant of symmetric matrices, 
$\det (X
%\delete{_{ij}}
) \in \R[x_{ij} : i \le j]$, which is hyperbolic with respect to the identity matrix, by the spectral theorem. Another example which is critical for our purposes is the elementary symmetric polynomial $e_k^n(x)$, defined as 
\[
    e_k^n(x) = \sum_{S \subseteq [n] : |S| = k} \prod_{i \in S} x_i.
\]
This polynomial is hyperbolic with respect to $\vec{1}$, the all ones vector.  %\gb{What is missing from the discussion below is that hyperbolicity cone consists of (the closure of) vectors with respect to which the polynomial is hyperbolic. Do we need to single out all ones vector?}.

Let $V(p)\subset \mathbb{R}^n$ be the set of zeros of the polynomial $p$. Let $p$ be hyperbolic with respect to $v\in \mathbb{R}^n$. The \change{closed} hyperbolicity cone of polynomial $p$ with respect to $v$, is the closure
%\footnote{\delete{some references define the hyperbolicity cone as just the connected component of $\mathbb{R}^n \setminus V(p)$ containing $v$}} 
of the connected component of $\R^n \setminus V(p)$ containing $v$~\cite{gaarding1951linear,guler1997hyperbolic,renegar2004hyperbolic}. We will denote it by $H_v(p)$. When $p$ is the determinant or sum of certain principal subdeterminants of a symmetric matrix, we will abbreviate $H(p)=H_I (p)$, where $I$ is the identity matrix. 

\change{The hyperbolicity cone of elementary symmetric polynomial $e_k^n(x)$ with respect to $\vec{1}$ will be denoted by $H(e_k^n)$. A simple algebraic characterization of $H(e_k^n)$ is given by (see for example~\cite{zinchenko2008hyperbolicity})
\[
    H(e_k^n)=\{x\in\R^n: e_l^n(x)\ge 0\textup{ for all }1\le l\le k\}. 
\]
It is also known that $H(e_k^n)$ is spectrahedral~\cite{Branden2014spec} for all $1\le k\le n$, i.e., an affine slice of a higher dimensional PSD cone. 
}

%where we suppress the dependence on $v$ for our purposes, as it will be clear from context. In the case of the determinant, the $H(\det)$ is the positive semidefinite cone.

The key fact for our purposes is Proposition 1 in \cite{renegar2004hyperbolic} originally proved by G\r{a}rding in \cite{gaarding1951linear}. 
\begin{lemma}\label{lem:hyperbolic_boundary}
    \change{If $p$ is hyperbolic with respect to $v$, then} $H_v(p)$ is convex, and its boundary is precisely $H_v(p) \cap V(p)$.
\end{lemma}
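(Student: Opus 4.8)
The plan is to cite the fundamental structural results of Gårding on hyperbolic polynomials, reorganizing them into the two claims we need: convexity of $H_v(p)$, and the identification of its boundary with $H_v(p)\cap V(p)$. I would first set up the univariate restriction: for a fixed $x\in\R^n$, write $p(x-tv)=c\prod_{i=1}^d(t-\lambda_i(x))$, where $d=\deg p$ and the roots $\lambda_i(x)$ are real by hyperbolicity; order them $\lambda_{\min}(x)=\lambda_1(x)\le\cdots\le\lambda_d(x)$. The key observation is that $x\in H_v(p)$ if and only if all $\lambda_i(x)\ge 0$: the connected component of $\R^n\setminus V(p)$ containing $v$ is exactly the set where no restriction root vanishes and (by a connectedness/continuity argument starting from $v$, where all roots equal $p(v)>0$'s normalization so are positive) all roots are strictly positive; taking closures gives $\{x:\lambda_i(x)\ge 0\ \forall i\}$. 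I would record this as the working description of $H_v(p)$.

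Next, for convexity I would invoke Gårding's theorem that $\lambda_{\min}(\cdot)$ is a concave function on $\R^n$ (equivalently, that hyperbolicity of $p$ with respect to $v$ is preserved under the relevant operations, and the cone is the super-level set $\{\lambda_{\min}\ge 0\}$ of a concave function). Concavity of $\lambda_{\min}$ makes $\{x:\lambda_{\min}(x)\ge 0\}=H_v(p)$ convex immediately; it is also clearly a cone since $p$ is homogeneous (if $p$ is not assumed homogeneous, one restricts attention to the homogeneous case relevant here, namely $p=e_k^n$ or $p=c_k^n$). For the boundary statement: a point $x\in H_v(p)$ lies in the interior iff $\lambda_{\min}(x)>0$, i.e.\ iff all roots of $p(x-tv)$ are strictly positive, i.e.\ iff $p(x)\ne 0$ (since $\lambda=0$ is a root exactly when $p(x)=0$); hence the boundary is $\{x\in H_v(p):p(x)=0\}=H_v(p)\cap V(p)$. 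One direction (boundary points lie in $V(p)$) also follows softly: $V(p)$ is closed and its complement near $v$ is open, so $\partial H_v(p)\subseteq V(p)$; the substantive content is the reverse inclusion, that every zero of $p$ inside the cone is genuinely on the boundary, which is where concavity/strict positivity of $\lambda_{\min}$ on the interior is used.

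The main obstacle is really just bookkeeping: making the identification $H_v(p)=\{x:\lambda_i(x)\ge 0\ \forall i\}$ precise, since $H_v(p)$ is \emph{defined} as the closure of a connected component rather than via the root functions. The cleanest route is: (i) show the set $U=\{x:\lambda_i(x)>0\ \forall i\}$ is open, connected (it is convex by Gårding, or one argues directly that it is star-shaped about $v$), and contains $v$, and is contained in $\R^n\setminus V(p)$; (ii) show $U$ is a connected component of $\R^n\setminus V(p)$, i.e.\ it is also closed in $\R^n\setminus V(p)$ — any boundary point of $U$ that avoids $V(p)$ would have some $\lambda_i=0$, forcing $p=0$, a contradiction; (iii) conclude $H_v(p)=\overline{U}=\{x:\lambda_i(x)\ge0\}$ by continuity of the roots in $x$. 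Since all of this is classical and the paper explicitly attributes it to Gårding and Renegar, I would present it concisely, citing \cite{gaarding1951linear,renegar2004hyperbolic}, and spend the bulk of the argument only on step (ii), the component-identification, which is the one genuinely load-bearing point.
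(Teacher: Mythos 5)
Your proposal is correct and takes the same route as the paper, which gives no proof of this lemma but cites it directly as Proposition~1 of Renegar~\cite{renegar2004hyperbolic}, originally due to G\r{a}rding~\cite{gaarding1951linear}. Your sketch via the root functions $\lambda_i(x)$, concavity of $\lambda_{\min}$, and the component-identification argument is a faithful reconstruction of the classical proof; the paper simply treats the whole statement as a black box, so no further detail is expected.
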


\subsection{Linear algebra}
We use the following standard results in the paper. \\Proofs can be found in \cite{Horn1985matrix}. 

\begin{theorem}[Schur-Horn theorem]\label{thm:Schur-Horn} Let  $d, \lambda \in \mathbb{R}^n$ such that $d_i \geq d_{i +1}$ and $\lambda_i \geq \lambda_{i +1}$, $i \in [n -1]$. There is a symmetric matrix with diagonal values $d$ and eigenvalues $\lambda$ if and only if:
\begin{itemize}
\item $\sum_{i = 1}^j d_i \leq \sum_{i = 1}^j \lambda_i$ for all $j \in [n -1]$,
\item $\sum_{i = 1}^nd_i = \sum_{i = 1}^n \lambda_i$.
\end{itemize}
\end{theorem}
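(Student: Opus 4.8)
The plan is to prove the two directions separately, treating necessity as a quick convexity argument and sufficiency as an inductive construction. For necessity, suppose $A$ is symmetric with diagonal $d$ (sorted decreasingly) and eigenvalues $\lambda$ (sorted decreasingly). The trace identity $\sum_{i=1}^n d_i = \tr(A) = \sum_{i=1}^n \lambda_i$ is immediate. For the partial-sum inequalities, fix $j \in [n-1]$ and let $P$ be the orthogonal projection onto the coordinate subspace spanned by $e_1, \dots, e_j$; then $\sum_{i=1}^j d_i = \tr(PAP) = \tr(PA)$. Writing $A = \sum_{\ell} \lambda_\ell u_\ell u_\ell^\top$ in an orthonormal eigenbasis, we get $\tr(PA) = \sum_\ell \lambda_\ell \|P u_\ell\|^2$, where the weights $w_\ell := \|P u_\ell\|^2$ satisfy $0 \le w_\ell \le 1$ and $\sum_\ell w_\ell = \tr(P) = j$. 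Maximizing the linear functional $\sum_\ell \lambda_\ell w_\ell$ over this polytope of weights puts all the weight on the $j$ largest eigenvalues, giving $\sum_{i=1}^j d_i \le \sum_{i=1}^j \lambda_i$. (This is just the Ky Fan / Rayleigh--Ritz bound; alternatively one can cite it directly.)

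For sufficiency, I would induct on $n$. The base case $n = 1$ is trivial. For the inductive step, assume $d$ is majorized by $\lambda$ (both sorted decreasingly, with equal sums). The key is a \emph{pinching} or \emph{intermediate value} step: I want to find a real number $\mu$ and a sorted vector $\lambda' \in \R^{n-1}$ such that (i) $d_2 \ge \dots \ge d_n$ is majorized by $\lambda'$, and (ii) there is a $2\times 2$ symmetric matrix with diagonal $(d_1, \mu)$ and eigenvalues $\{\lambda_1, \mu'\}$ for suitable bookkeeping — more precisely, the standard trick is: since $d_1 \le \lambda_1$ and $d_1 \ge \frac{1}{n}\sum \lambda_i \ge \lambda_n$, there is an index $m$ with $\lambda_{m+1} \le d_1 \le \lambda_m$ (indices into the sorted $\lambda$). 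Then one can build an orthogonal rotation in the $(m, m+1)$ coordinate plane that, applied to $\diag(\lambda)$, produces a matrix whose $(1,1)$ entry is $d_1$; removing the first row and column leaves a symmetric matrix whose eigenvalues $\lambda'$ interlace appropriately, and a direct check shows $(d_2, \dots, d_n)$ is majorized by $\lambda'$. Apply the inductive hypothesis to get a symmetric $(n-1)\times(n-1)$ matrix $B$ with diagonal $(d_2,\dots,d_n)$ and eigenvalues $\lambda'$; then conjugating the block matrix $\diag(\lambda_1', B)$ (where $\lambda_1'$ is the leftover eigenvalue from the rotation step) back by the rotation recovers a symmetric matrix with diagonal $d$ and eigenvalues $\lambda$. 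I would organize this as: (1) reduce to constructing one rotation that fixes the first diagonal entry to $d_1$ while keeping the spectrum; (2) verify the majorization is inherited by the deflated problem; (3) invoke induction.

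The main obstacle is step (2): verifying that after the rotation-and-deflation the residual eigenvalue vector $\lambda'$ still majorizes the residual diagonal $(d_2, \dots, d_n)$. This requires carefully tracking how the two eigenvalues $\lambda_m, \lambda_{m+1}$ get replaced by $d_1$ and some $\mu$ with $\lambda_m + \lambda_{m+1} = d_1 + \mu$ and $\lambda_{m+1} \le \mu, d_1 \le \lambda_m$, reinserting $\mu$ into the sorted list, and then checking all $n-1$ partial-sum inequalities for the deflated pair — splitting into cases according to where $\mu$ lands relative to the other $\lambda_i$ and whether a given partial sum includes $\lambda_1$, $\mu$, both, or neither. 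This is elementary but the bookkeeping is the substance of the proof; everything else is routine. An alternative that sidesteps some of this is to induct instead by ``moving'' $d$ toward $\lambda$ one Robin-Hood transfer at a time using $2\times 2$ rotations and the characterization of majorization via doubly stochastic matrices, but the deflation argument is cleaner to write down. Since the paper only \emph{uses} this theorem (it is cited to \cite{Horn1985matrix}), it would also be entirely legitimate here simply to state it and refer to the reference, but the sketch above is the proof one would reconstruct.
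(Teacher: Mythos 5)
The paper does not prove the Schur--Horn theorem; it records the statement and refers the reader to \cite{Horn1985matrix}, so there is no in-paper argument to compare against. Your sketch is, however, a correct reconstruction of the standard proof. The necessity direction via Ky Fan is fine as written: $\sum_{i\le j} d_i = \tr(PAP) = \sum_\ell \lambda_\ell\,\|Pu_\ell\|^2$ with weights in $[0,1]$ summing to $j$, so the linear program is maximized at the $j$ largest eigenvalues. The sufficiency direction by Givens-rotation deflation is also the right strategy.

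A few small imprecisions in the sufficiency sketch: the rotation in the $(m,m{+}1)$ plane applied to $\diag(\lambda)$ sets the $(m,m)$ entry (not the $(1,1)$ entry) equal to $d_1$; one then permutes that coordinate to position $1$ before deleting the first row and column. After deletion the remaining block is exactly $\diag(\lambda_1,\dots,\lambda_{m-1},\mu,\lambda_{m+2},\dots,\lambda_n)$ with $\mu=\lambda_m+\lambda_{m+1}-d_1$, so its eigenvalues are known outright --- invoking interlacing here is a misstatement (and weaker than what you actually have). Finally, the bookkeeping you flag as ``the substance'' is in fact quite tame: with $\lambda'=(\lambda_1,\dots,\lambda_{m-1},\mu,\lambda_{m+2},\dots,\lambda_n)$ already sorted, the deflated partial-sum inequalities for $j<m$ follow from $\sum_{i=2}^{j+1}d_i\le\sum_{i=1}^{j}d_i\le\sum_{i=1}^{j}\lambda_i$ (since $d_{j+1}\le d_1$), and for $j\ge m$ one computes $\sum_{i=1}^{j}\lambda'_i=\sum_{i=1}^{j+1}\lambda_i-d_1$ directly, so the required bound is exactly the original $(j{+}1)$-st partial-sum inequality; no case split on where $\mu$ lands is needed. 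The recovery step should be phrased as conjugating the lower-right block of $\begin{pmatrix} d_1 & v^\top \\ v & \diag(\lambda')\end{pmatrix}$ by the orthogonal matrix supplied inductively, rather than conjugating ``$\diag(\lambda_1',B)$''. With those fixes the proof closes, but as you note yourself, citing the reference as the paper does is entirely adequate.
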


\change{Note that the original theorem in~\cite{Horn1985matrix} proves the existence of a real symmetric matrix (not just Hermitian) that achieves the desired eigenvalues and diagonal entries. }

We next present the famous Cauchy's Interlacing Theorem. 
	
\begin{theorem}[Cauchy's Interlacing theorem]\label{thm:Cauchy} Consider an $n \times n$ symmetric matrix $A$ and let $A|_J$ be any of its $k\times k$ principal submatrices. Then for all $1\leq i\leq k$, 
$$ \lambda_{n-k+i}(A)\leq \lambda_i (A|_J)\leq \lambda_i (A). 	$$
\end{theorem}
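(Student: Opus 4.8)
The plan is the classical min--max argument; throughout, eigenvalues are listed in nonincreasing order $\lambda_1(\cdot)\ge\lambda_2(\cdot)\ge\cdots$.

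\emph{Reduction to a leading block.} Conjugating $A$ by a permutation matrix permutes its rows and columns simultaneously, leaving the multiset of eigenvalues unchanged and carrying $A|_J$ to a leading principal submatrix; so we may assume $J=\{1,\dots,k\}$ and write $B:=A|_J$ for the leading $k\times k$ block. Embed $\R^k\hookrightarrow\R^n$ by $x\mapsto\hat x:=(x_1,\dots,x_k,0,\dots,0)$ and put $W:=\{\hat x:x\in\R^k\}$. Then $\hat x^\top A\hat x=x^\top Bx$ and $\|\hat x\|=\|x\|$, so the Rayleigh quotient $R_A(y):=y^\top Ay/\|y\|^2$ of $A$ restricted to $W\setminus\{0\}$ is exactly the Rayleigh quotient of $B$ under this identification.

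\emph{Two subspace competitors.} Recall the Courant--Fischer characterization: for symmetric $C$ of size $m$,
\[
\lambda_i(C)=\max_{\substack{S\subseteq\R^m\\\dim S=i}}\min_{0\ne y\in S}R_C(y)=\min_{\substack{S\subseteq\R^m\\\dim S=m-i+1}}\max_{0\ne y\in S}R_C(y).
\]
Let $S_0\subseteq W$ be the image of the span of the $i$ top eigenvectors of $B$; then $\dim S_0=i$ and $\min_{0\ne y\in S_0}R_A(y)=\lambda_i(B)$, so the first formula for $A$ gives $\lambda_i(A)\ge\lambda_i(B)$. Let $S_1\subseteq W$ be the image of the span of the $k-i+1$ bottom eigenvectors of $B$; then $\dim S_1=k-i+1$ and $\max_{0\ne y\in S_1}R_A(y)=\lambda_i(B)$. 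Since $n-(n-k+i)+1=k-i+1$, the second formula for $A$ with index $j=n-k+i$ shows every $(k-i+1)$-dimensional subspace of $\R^n$ has maximal Rayleigh quotient $\ge\lambda_{n-k+i}(A)$; applying this to $S_1$ yields $\lambda_i(B)\ge\lambda_{n-k+i}(A)$. Together these give $\lambda_{n-k+i}(A)\le\lambda_i(A|_J)\le\lambda_i(A)$.

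\emph{Obstacles.} There is no genuine obstacle: the entire content is Courant--Fischer together with the remark that a Rayleigh quotient of a principal submatrix is a Rayleigh quotient of the full matrix on a coordinate subspace. The only care needed is index bookkeeping --- in particular verifying $n-(n-k+i)+1=k-i+1$, so that the ``bottom'' subspace of $B$ lands in precisely the right slot of the min--max formula for $A$ and the shift on the left-hand index comes out to exactly $n-k$. (Alternatively one may prove only the single-deletion case $k=n-1$, where $S_0,S_1$ have dimensions $i$ and $n-i$, and then iterate over a chain $A=A_0\supseteq A_1\supseteq\cdots\supseteq A_{n-k}=A|_J$ of one-index deletions, adding the unit shifts.)
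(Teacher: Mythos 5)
The paper does not actually prove this theorem; it records it in Section~\ref{sec:pre} as a standard fact and defers to \cite{Horn1985matrix} for the proof, so there is no paper-internal argument for you to be compared against. Your proof is correct, and it is the standard two-sided Courant--Fischer argument: identify the Rayleigh quotient of $A|_J$ with the Rayleigh quotient of $A$ restricted to the coordinate subspace $W$, then feed the top-$i$ and bottom-$(k-i+1)$ eigenspaces of $B$ into the max--min and min--max formulas for $A$ respectively; your index bookkeeping $n-(n-k+i)+1=k-i+1$ is right. One remark worth flagging: you declare eigenvalues in nonincreasing order, which is what makes the stated double inequality nontrivial (with nonincreasing order $\lambda_{n-k+i}(A)\le\lambda_i(A)$ automatically since $n-k+i\ge i$). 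This is the opposite of the paper's globally declared convention $\lambda_1\le\cdots\le\lambda_n$, under which the theorem statement would collapse to forced equalities; so the theorem statement itself is implicitly using the nonincreasing ordering, and your choice of convention is the right one to make the statement both true and meaningful. If you wanted to match the paper's global convention, you would instead prove $\lambda_i(A)\le\lambda_i(A|_J)\le\lambda_{n-k+i}(A)$, by the same argument with the roles of the two Courant--Fischer formulas exchanged.
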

% and {\displaystyle \mathbf {\lambda } =\{\lambda _{i}\}_{i=1}^{N}}\mathbf{\lambda}=\{\lambda_i\}_{i=1}^N be vectors in {\displaystyle \mathbb {R} ^{N}}\mathbb {R} ^{N} such that their entries are in non-increasing order. There is a Hermitian matrix with diagonal values {\displaystyle \{d_{i}\}_{i=1}^{N}}\{d_i\}_{i=1}^N and eigenvalues {\displaystyle \{\lambda _{i}\}_{i=1}^{N}}\{\lambda_i\}_{i=1}^N if and only if
%
%{\displaystyle \sum _{i=1}^{n}d_{i}\leq \sum _{i=1}^{n}\lambda _{i}\qquad n=1,2,\ldots ,N}\sum_{i=1}^n d_i \leq \sum_{i=1}^n \lambda_i \qquad n=1,2,\ldots,N
%and
%
%{\displaystyle \sum _{i=1}^{N}d_{i}=\sum _{i=1}^{N}\lambda _{i}.}\sum_{i=1}^N d_i= \sum_{i=1}^N \lambda_i.

%\ss{I mentioned that ours is only symmetric case. Not sure if this is want we want to do. }
We also present the symmetric case of Jacobi Complementary Minors formula. The proof for general case can be found in~\cite{MR1411115}:
\begin{theorem}[Jacobi's Complementary Minors Formula, symmetric case]\label{thm:compMinors} \label{thm:jacobi}
  Let $M$ be an invertible $n \times n$ matrix and $\emptyset \subsetneq S \subsetneq [n]$.
\begin{equation}\label{eqn:minordet}
    \det(M|_S) = \det(M)\det(M^{-1}|_{\change{S^c}}).
\end{equation}
\end{theorem}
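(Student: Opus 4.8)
The plan is to normalize the index set and then argue separately according to whether the relevant principal block is invertible; symmetry of $M$ will play no role, so I would prove the statement for an arbitrary invertible matrix. First I would choose a permutation $\sigma$ of $[n]$ carrying $\{1,\dots,k\}$ onto $S$ (hence $\{k+1,\dots,n\}$ onto $S^c$), where $k=|S|$, let $P$ be the associated permutation matrix, and set $N=P^{\top}MP$. Then $\det N=\det M$ and $N^{-1}=P^{\top}M^{-1}P$, and the principal submatrices $N|_{\{1,\dots,k\}}$ and $N^{-1}|_{\{k+1,\dots,n\}}$ agree, after relabeling, with $M|_S$ and $M^{-1}|_{S^c}$ respectively. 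Hence it suffices to prove \eqref{eqn:minordet} for $N$ with $S=\{1,\dots,k\}$. Write $N$ in block form
\[
N=\begin{pmatrix} A & B\\ C & D\end{pmatrix},\qquad A=N|_{\{1,\dots,k\}}\in\R^{k\times k}.
\]

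\emph{Invertible block.} When $A$ is invertible, I would use the block factorization
\[
N=\begin{pmatrix} I & 0\\ CA^{-1} & I\end{pmatrix}\begin{pmatrix} A & 0\\ 0 & W\end{pmatrix}\begin{pmatrix} I & A^{-1}B\\ 0 & I\end{pmatrix},\qquad W:=D-CA^{-1}B.
\]
Taking determinants gives $\det N=\det A\cdot\det W$, and since $\det N\neq 0$ this forces $W$ invertible. Inverting the factorization and reading off the lower-right $(n-k)\times(n-k)$ block of $N^{-1}$, that block is exactly $W^{-1}$, so $\det\big(N^{-1}|_{\{k+1,\dots,n\}}\big)=1/\det W$. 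Multiplying, $\det(N)\,\det\big(N^{-1}|_{\{k+1,\dots,n\}}\big)=\det A\cdot\det W/\det W=\det A=\det\big(N|_{\{1,\dots,k\}}\big)$, which is the claim.

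\emph{Singular block.} If $A$ is singular, then $\det(M|_S)=0$ and it remains to show $\det(M^{-1}|_{S^c})=0$. Rather than perturb $M$ and pass to a limit, I would bootstrap from the case just proved: suppose for contradiction that $(M^{-1})|_{S^c}$ is invertible, and apply the invertible-block case to the invertible matrix $M^{-1}$ with index set $S^c$ (noting $(S^c)^c=S$ and $(M^{-1})^{-1}=M$). This yields $\det\big((M^{-1})|_{S^c}\big)=\det(M^{-1})\,\det(M|_S)=\det(M^{-1})\cdot 0=0$, contradicting invertibility. Hence $\det(M^{-1}|_{S^c})=0$, so both sides of \eqref{eqn:minordet} vanish, completing the proof.

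\emph{Expected obstacle.} The computation in the invertible-block case is routine, and the permutation reduction requires only careful bookkeeping to track principal submatrices and the inverse consistently. The one genuinely delicate point is the singular case, where the Schur-complement manipulation is unavailable; the clean way around it is the self-referential argument above (applying the already-established case to $M^{-1}$ and $S^c$), which also explains why the identity has no sign ambiguity: the complementary set $S^c$ enters only through a principal submatrix, never through an off-diagonal one.
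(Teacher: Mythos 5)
The paper does not actually prove this statement; it records Theorem~\ref{thm:jacobi} and defers the proof to the cited reference \cite{MR1411115}, noting only that the $|S|=n-1$ case reduces to Cramer's rule. So there is no in-paper argument to compare against. Your proof is correct and self-contained. The permutation reduction via $N=P^{\top}MP$ is handled properly: $\det N=\det M$ because $(\det P)^2=1$, and $N^{-1}=P^{\top}M^{-1}P$ since $P$ is orthogonal, so both $N|_{\{1,\dots,k\}}$ and $N^{-1}|_{\{k+1,\dots,n\}}$ are simultaneous row-column relabelings of $M|_S$ and $M^{-1}|_{S^c}$, preserving determinants. The invertible-block case is the standard Schur complement factorization with the lower-right block of $N^{-1}$ equal to $W^{-1}$, giving $\det N\cdot\det(W^{-1})=\det A$. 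The handling of the singular block is the one place a careless argument would try a perturbation-and-limit step; your bootstrap of applying the already-proved case to $M^{-1}$ with index set $S^c$ is cleaner and airtight, since the hypothesis you need (invertibility of $(M^{-1})|_{S^c}$) is exactly what you assume for contradiction. You also correctly observe that symmetry of $M$ is irrelevant, so your argument actually establishes the general (non-symmetric) version that the paper attributes to \cite{MR1411115}.
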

This theorem states that the minor of $M$ corresponding to a subset $S$, can be written in terms of the minor of $M^{-1}$ with respect to the complement \change{$S^{c}$}.
In the simplest case when $|S| = n-1$, this is simply Cramer's rule for the diagonal entries of the inverse matrix. 

\section{Proof of Theorem~\ref{thm:min_eigen} and Corollary~\ref{cor:improve}}
%The theorem we wish to prove in this section is
%\begin{theorem}
%Let $F \in \mathcal{F}$ and let $\tilde{G}(n,k)=\frac{G(n,k)}{F(G(n,k))}$. For any $M\in \mathcal{S}^{n,k}$ with $F(M)=1$, the minimal eigenvalue of $M$ is at least as large as the minimal eigenvalue of $\tilde{G}(n,k)$:
%   $$\lambda_1(M)\leq \lambda_1(\tilde{G}(n,k)).$$
%\end{theorem}
\subsection{Proof of Theorem~\ref{thm:min_eigen}}
For the remainder of this section, fix $n > k \geq 2$ and an $F \in \mathcal{F}$. Let $f$ be the function so that $f(\lambda(M)) = F(M)$ for each $M \in \Sym_n$.

\change{In order to prove the theorem, we would like to verify an appropriate lower bound on $z^*$ defined as:
\begin{align*}
 z^{*}:=   \text{minimize }&\lambda_1(M)\\
    \text{s.t. }&f(\lambda(M)) = 1\\
                & M \in \mathcal{S}^{n,k},
\end{align*}
where $\lambda_1(M)$ is the smallest eigenvalue of $M$. 
%Note that if $z^* \geq 0$, then there is nothing to prove since $\lambda_1(\tilde{G}(n,k)) < 0$. Therefore we may assume $z^* < 0$. 

In order to provide a lower bound on $z^*$, we apply (i) the hyperbolic relaxation for the eigenvalues of $\mathcal{S}^{n,k}$ to replace  $\{\lambda(M)\,|\, M \in \mathcal{S}^{n,k}\}$ with $H(e^n_k)$, (ii) replace $f(\lambda(M))  = 1$ by $f(\lambda(M))  \leq 1$ to obtain the following convex optimization problem:

\begin{align}\label{eq:convrel}
\begin{array}{rl}
z^{l}:=    \text{minimize }&\lambda_1\\
    \text{s.t. }& \lambda_1 \leq \lambda_i \ \forall i \in \{2, \dots, n\}\\
		&f(\lambda) \leq 1\\
                &(\lambda_1, \lambda_2 ,\dots, \lambda_n) \in H(e_k^n).
\end{array}
\end{align}

It is straightforward to verify that the set $\{\lambda\,|\, f(\lambda) \leq 1, \lambda \in H(e^n_k)\}$ is compact. Thus $z^l$ is finite and at least one optimal solution exists. Also note that since (\ref{eq:convrel}) is a convex program which is symmetric with regards to variables $\lambda_2, \dots, \lambda_n$, it is straightforward to verify that there exists an optimal solution where $\lambda_2 = \dots = \lambda_n$. Therefore, we arrive at the following two variable optimization problem:
\begin{align}\label{eq:convrel1}
\begin{array}{rl}
z^{l}:=    \text{minimize }&\lambda_1\\
    \text{s.t. }& \lambda_1 \leq \lambda_2 \\
		&f(\lambda_1, \lambda_2, \dots, \lambda_2) \leq 1\\
                &(\lambda_1, \lambda_2 ,\dots, \lambda_2) \in H(e_k^n).
\end{array}
\end{align}
Next observe that if we remove the constraint $(\lambda_1, \lambda_2 ,\dots, \lambda_2) \in H(e_k^n)$ from (\ref{eq:convrel1}), then 
\begin{itemize}
    \item If $f$ corresponds to a norm, then the optimal solution of the resulting problem is of the form $(a, 0, \dots, 0)$ where $a <0$, which is infeasible for (\ref{eq:convrel1}) since it does not satisfy the constraint $(\lambda_1, \lambda_2 ,\dots, \lambda_2) \in H(e_k^n)$. Thus, the optimal solution of (\ref{eq:convrel1}) belongs to the boundary of of $H(e_k^n)$. %the constraints $(\lambda_1, \lambda_2 ,\dots, \lambda_2) \in H(e_k^n)$. 
    \item If $f$ corresponds to the trace function, then the optimal solution of the resulting problem is unbounded. Thus, again we can conclude that the optimal solution of (\ref{eq:convrel1}) belongs to the boundary of $H(e_k^n)$. %the constraints $(\lambda_1, \lambda_2 ,\dots, \lambda_2) \in H(e_k^n)$.
\end{itemize}
Simple computation shows that for $(\lambda_1, \lambda_2, \dots, \lambda_2)$ to be on the boundary of $H(e^n_k)$, we have $\lambda_1  = -\frac{n - k}{k}\lambda_2$. Thus, we obtain that 
$z^{l}  = -\frac{1}{f\left(-1, \frac{k}{n - k}, \dots, \frac{k}{n - k} \right) }$. Since $f$ is $1$-homogeneous, it is easy to verify that $\lambda_1(\tilde{G}(n,k)) = -\frac{1}{f\left(-1, \frac{k}{n - k}, \dots, \frac{k}{n - k} \right) }$, which completes the proof of the theorem}.

\subsection{Proof of Corollary~\ref{cor:improve}}
%\subsection{Improving theorem in old paper}
%Now we use this technique to improve a theorem in our older paper. 
%
%\begin{lemma}
%    Let $\lambda=(\lambda_1,\lambda_2,...,\lambda_n)\in H(e_k^n)$ such that $\|\lambda\|_2=1$. If $\min_i \lambda_i <0$ then $$|\min_i \lambda_i|\leq \frac{n-k}{\sqrt{(n-k)^2+(n-1)k^2}}.$$
%\end{lemma}
%
%\begin{proof}
%    Without loss of generality we may assume $\lambda_1$ is the minimum $\lambda_i$. We consider $\lambda'$ obtained from $\lambda$ by symmetrizing over all coordinates except for first one, i.e. $\lambda'_1=\lambda_1$ and $\lambda'_i=(\lambda_2+...+\lambda_n)/(n-1)$ for all $i\geq 2$. 
%    
%    From triangle inequality we get $\|\lambda'\|_2\leq \|\lambda\|_2$. Thus $\lambda'/\|\lambda'\|_2$ has unit $\ell_2$ norm and its minimum entry is smaller than $\lambda$. This shows that the optimal $\lambda$ has only one negative entry, and all other entries are the same. 
%    
%    Thus we can see that the optimal solution is a scaling of $(k-n,k,...,k)$, and we get our result. 
%\end{proof}
%
%\begin{remark}
%    This proof works for any symmetric norm such that norm of any standard basis vector is 1. In particular it holds for all $\ell_p$ norms. 
%\end{remark}

%\begin{corollary}
%    Let $A\in\mathcal{S}^{n,k}$ with $\|A\|_F=1$. Then $\textup{dist}(A,\mathcal{S}^n_+)\leq \frac{(n-k)^{3/2}}{\sqrt{(n-k)^2+(n-1)k^2}}$
%\end{corollary}
%\change{Deleted the proof environment}

   Let $A\in\mathcal{S}^{n,k}$ with $\|A\|_F=1$. If $A$ is PSD then the distance is zero, so we assume $A$ has at least one negative eigenvalue. \change{By the} Cauchy interlacing theorem (Theorem~\ref{thm:Cauchy}), $A$ has at most $n-k$ negative eigenvalues. So $\textup{dist}(A,\mathcal{S}^n_+)\leq |\lambda_{1}(A)|\sqrt{n-k}$.  By Theorem~\ref{thm:min_eigen} we have that $|\lambda_{1}(A)| \leq \frac{1}{ \sqrt{1 + (n-1) \frac{k^2}{(n -k)^2} }}$, which completes the proof.
%Since the eigenvalue belongs to hyperbolicity cone $H(e_k^n)$ we get the result using previous lemma. 

\section{Proof of Theorem~\ref{thm:k=n-1}}

\change{When $k=n$ the statement is clear, since $x\in H(e_n^n)$ if and only if $x\ge 0$, and it is the eigenvalues of $\diag(x)$ which is PSD. 
When $k=1$, let $x\in H(e_1^n)=\{y:\sum_{i=1}^n y_i\ge 0\}$. By the Schur-Horn theorem (Theorem~\ref{thm:Schur-Horn}), there exists a symmetric matrix $M_0$ with identically zero diagonal entries and eigenvalues $x-\frac{\sum_{i=1}^n x_i}{n}$. Thus, $M_0+\frac{\sum_{i=1}^n x_i}{n}I$ has eigenvalues $x$ and is in $\mathcal{S}^{n,1}$, since all of its diagonal entries are nonnegative. 
}

%We show that the hyperbolicity cone relaxation of $\mathcal{S}^{n,n-1}$ is exact on the level of eigenvalues. 

%That is, \gb{Why do we have a different name for Theorem 2.3 (and now we call it a lemma)? We should either say that we restate Theorem 2.3 or not do a precise statement.}

%\begin{lemma}
    %If $x \in H(e_{n-1}^n)$, then $x$ is the set of eigenvalues of some matrix in $\mathcal{S}^{n,n-1}$.
%\end{lemma}
\change{Now let $k=n-1$. }First, we reduce to the case when $x$ is on the boundary of $H(e_{n-1}^n)$. To do that, we note that if $x$ is any point in $H(e_{n-1}^n)$, then for some $t>0$, $x - t \vec{1}$ will lie on the boundary of the cone. If $x - t\vec{1} $ is \change{a vector of eigenvalues} of a matrix $M$ in $\mathcal{S}^{n,n-1}$, then $x$ is \change{a vector of eigenvalues} of $M + t I$, also in $\mathcal{S}^{n,n-1}$.

We apply a corollary 3.2 in \cite{zinchenko2008hyperbolicity} to argue that if $x$ is in $H(e_{n-1}^n)$, then it has at most one negative eigenvalue. 
\begin{lemma}
    If $x \in H(e_{n-1}^n)$, and $x$ has a negative entry, then all other entries of $x$ are strictly positive.
\end{lemma}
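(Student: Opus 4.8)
The plan is to first use the cited Corollary 3.2 of \cite{zinchenko2008hyperbolicity} to reduce to the situation where $x$ has \emph{exactly} one negative coordinate, and then rule out any zero coordinate by a monomial-collapsing argument applied to the defining inequalities $e_\ell^n(x)\ge 0$ of $H(e_{n-1}^n)$.

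First I would record that since $x\in H(e_{n-1}^n)$, the cited result gives that $x$ has at most one negative coordinate; together with the hypothesis that $x$ has a negative coordinate, we may relabel the coordinates so that $x_1<0$ and $x_2,\dots,x_n\ge 0$. It then remains to show that none of $x_2,\dots,x_n$ vanishes. Suppose, for contradiction, that $Z:=\{i\in[n]:x_i=0\}$ is nonempty, and set $m:=|Z|$. Since $x_1\ne 0$ we have $1\notin Z$, so $1\le m\le n-1$, and hence $\ell:=n-m$ satisfies $1\le\ell\le n-1$. The key observation is that in the expansion $e_\ell^n(x)=\sum_{|S|=\ell}\prod_{i\in S}x_i$, every monomial indexed by a set $S$ that meets $Z$ vanishes, so the only surviving term corresponds to the unique set of size $\ell$ disjoint from $Z$, namely $S=[n]\setminus Z$. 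Thus $e_\ell^n(x)=\prod_{i\notin Z}x_i$. Since $1\notin Z$ while every other index $i\notin Z$ has $x_i>0$ (being nonnegative and nonzero), this product equals $x_1$ times a strictly positive number, hence is negative. This contradicts $e_\ell^n(x)\ge 0$, which holds because $x\in H(e_{n-1}^n)$ and $1\le\ell\le n-1$. Therefore $Z=\emptyset$, i.e., all of $x_2,\dots,x_n$ are strictly positive.

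The only substantive input beyond elementary bookkeeping is the ``at most one negative coordinate'' statement, which I take directly from \cite{zinchenko2008hyperbolicity}; the rest is the observation that evaluating $e_\ell^n$ at a point with exactly $m$ zero coordinates and choosing $\ell=n-m$ collapses the elementary symmetric polynomial to a single monomial. I do not anticipate a genuine obstacle here; the only point to be careful about is confirming that the index $\ell=n-m$ always lies in the admissible range $\{1,\dots,n-1\}$, which is exactly where the facts $1\notin Z$ (so $m\le n-1$) and $Z\ne\emptyset$ (so $m\ge 1$) are used.
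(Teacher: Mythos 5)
Your proof is correct and takes a genuinely different route from the paper's. The paper proves the lemma self-contained via Sanyal's spectrahedral characterization of $H(e_{n-1}^n)$: $x \in H(e_{n-1}^n)$ if and only if $\diag(x_1,\dots,x_{n-1}) + x_n\vec{1}\vec{1}^\top \succeq 0$, after which nonnegativity of the diagonal entries immediately gives $x_i \geq -x_n > 0$ for $i \neq n$ (with $x_n$ the negative entry). You instead split the claim: you invoke the Zinchenko--Tun\c{c}el corollary (which the paper mentions only as motivation, not in its actual proof) to get ``at most one negative coordinate,'' and then handle the ``no zeros'' part by the monomial-collapsing argument on the defining inequalities $e_\ell^n(x) \geq 0$, choosing $\ell = n - |Z|$ so the elementary symmetric polynomial degenerates to a single negative monomial. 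Your approach works directly from the polynomial-inequality description of the cone and needs no spectrahedral representation, which is arguably more elementary; on the other hand it outsources the ``at most one negative'' step to an external citation, whereas the paper's proof via Sanyal obtains the entire conclusion in one stroke. Both proofs are sound.
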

\begin{proof}
\change{
    By Theorem 1.1 in \cite{sanyal2011derivative}, we have that $x \in H(e_{n-1}^n)$ if and only if
    \[
        X = \diag(x_1, \dots, x_{n-1}) + x_n \vec{1}_{n-1}\vec{1}_{n-1}^{\intercal} \succeq 0.
    \]
    Here, we use $\vec{1}_{n-1}$ to denote the all ones vector in $n-1$ dimensions for emphasis.
    
    By permuting the coordinates, we can assume that $x_n$ is an entry so that $x_n < 0$. Then, we have that the diagonal entries of $X$ are nonnegative, so for $i \neq n$,
    \[
        x_i + x_n \ge 0.
    \]
    So, $x_i \ge -x_n > 0$, concluding the theorem.
}

\end{proof} 

Thus, if $x$ lies on the boundary, we will consider two cases: either all entries of $x$ are nonnegative, or exactly one entry of $x$ is negative and others are positive. If all entries of $x$ are nonnegative, then there is a PSD matrix whose vector of eigenvalues is $x$, and in particular, a matrix in $\mathcal{S}^{n,n-1}$ with these eigenvalues.

If $x$ lies on the boundary and exactly one entry of $x$ is negative, then consider
\[
    e_{n-1}^n(x) = \sum_{i \in [n]} \prod_{j \in [n] \setminus i} x_j = \left(\prod_{j \in [n]} x_j\right)\sum_{i \in [n]}\frac{1}{x_i} = 0,
\]
which is well defined since in the previous lemma, we showed that all entries of $x$ are nonzero. Thus, $\sum_{i \in [n]}\frac{1}{x_i} = 0$.

Now, we can apply the Schur-Horn theorem (Theorem~\ref{thm:Schur-Horn}), which implies that there is a matrix $L$ whose diagonal \change{entries are} all zeros, \change{and }whose eigenvalues are $\{\frac{1}{x_i}\}$. In particular, $L$ is invertible, so let $M = L^{-1}$. Since all of the diagonal entries of $L$ are 0, all of the $(n-1) \times (n-1)$ minors of $M$ are zero by Cramer's rule for the diagonal entries of the inverse matrix. Also note that $x$ has $(n-1)$ positive entries, so by eigenvalue interlacing, all of the $(n-1)\times(n-1)$ minors of $M$ have at most 1 non-positive eigenvalue. Now, simply by noting that they all have 0 as an eigenvalue, this in particular implies that all $(n-1)\times(n-1)$ minors of $M$ have nonnegative eigenvalues, and hence are PSD. Thus, $M$ is a matrix in $\mathcal{S}^{n,n-1}$ with the desired eigenvalues.

\section{Proof of the structure theorem for $\mathcal{S}^{n, k}$}
\subsection{Proof roadmap}
Given a matrix in $\mathcal{S}^{n,k}$, which is nonsingular and locally singular  -- we will 
%\delete{we will} 
abbreviate by saying that $M$ is an \textbf{NLS} matrix.

We show Theorem \ref{thm:struc} in three steps. We first prove base cases $n-k=2$ and $k=3$, and then we use double induction on $n-k$ and $k$ to prove the statement for general $k$.
%We will show this lemma using induction on two variables: firstly on $n-k$, then on $n$. We will use some facts about Schur complements to complete the inductive step. 
For the base case $n-k = 2$, there is a very interesting step of taking the inverse of a given \change{NLS} matrix, and using some facts about the structure of the inverse matrix.

The inductive step for this argument relies on some observations about Schur complements. The Schur complement of a symmetric matrix $M$ with respect to a nonzero diagonal entry $M_{ii}$ is defined to be {\color{black} the $(n-1)\times (n-1)$ matrix}
\[
    M \setminus \{i\} = M|_{[n] \setminus \{i\}} - \frac{1}{M_{ii}} \tilde{M}_{i} \tilde{M}_{i}^{\top},
\]
where {\color{black} $\tilde{M}_i$ is obtained from the $i^{th}$ column of $M$ after removing $i^{th}$ entry.}

Now, we recall some facts for matrices $M$ with strictly positive diagonal entries~\cite{zhang2006schur}.
%TODO: Cite something here.
\begin{itemize}
    \item \textbf{Schur complements preserve PSD-ness.} $M$ is PSD if and only if $M \setminus \{i\}$ is PSD.
    \item \textbf{Schur complements preserve singularness.} $M$ is nonsingular if and only if $M \setminus \{i\}$ is nonsingular.
    \item \textbf{Schur complements commute with taking submatrices.} If $i \notin S$, then {\color{black}$(M \setminus \{i\})|_S = (M|_{S\cup \{i\}})\setminus \{i\}$.}
\end{itemize}
The previous three properties imply the following: a matrix in $\mathcal{S}^{n,k}$ is NLS, if and only if for each $i\in [n]$, $M\setminus \{i\}$ is in $\mathcal{S}^{n-1,k-1}$ and NLS.

%Now, we see that if we know all NLS matrices in $\mathcal{S}^{n,k}$ are diagonally congruent to $G(n,k)$, then this puts enormous constraints on NLS matrices in $\mathcal{S}^{n+1, k+1}$, as their Schur complements will need to be congruent to $G(n,k)$. This will be the structure we exploit to prove the stronger structure theorem. \ss{Not sure if we put this paragraph here. Maybe instead just mention this Schur complement is our basic idea for induction}

\subsection{Structure theorem when $k = n-2$}
Let $M$ be a matrix in $\mathcal{S}^{n,n-2}$, which is NLS. Observe that NLS matrices in $\mathcal{S}^{n,n-2}$ must have strictly positive diagonal entries. If any diagonal entry is zero, then since $2\times 2$ minors of $M$ are nonnegative, an entire row and column of $M$ are filled with zeros, and then $M$ is singular, which is a contradiction. %So, we can assume that all diagonal entries of $M$ are strictly positive.

As a base case when $n=4$, consider an NLS matrix $M \in \mathcal{S}^{4,2}$.  We can perform a diagonal congruence transformation to obtain a matrix $\tilde{M}$, such that all of the diagonal entries of $\tilde{M}$ are 1, and since all $2\times 2$ minors of $M$ are zero, we see that all off diagonal entries of $\tilde{M}$ are $\pm 1$.  There are 6 off-diagonal entries, so that there are 64 distinct possibilities for locally singular matrices, up to \change{diagonal} congruence. All of these 64 matrices are either singular or congruent to $G(4,2)$, which can be checked using direct computation.

\begin{lemma}\label{lem:3cond}
Let $M \in \mathcal{S}^{n,n-2}$ be an NLS matrix. Then the following hold:
\begin{enumerate}
    \item $\det(M) < 0$.
    \item All $(n-1)\times(n-1)$ principal minors of $M$ are strictly negative.
    \item All $(n-3)\times(n-3)$ principal minors of $M$ are strictly positive.
\end{enumerate}
\end{lemma}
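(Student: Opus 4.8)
The plan is to prove the lemma by induction on $n$, keeping $k = n-2$ throughout, with base case $n = 4$ (recall that in the regime of interest for Theorem~\ref{thm:struc} with $k = n-2$ we always have $n \ge 4$). For the base case $n=4$, $k=2$: as observed just above, after a diagonal congruence we may assume all diagonal entries of $M$ equal $1$, and then, since all $2\times 2$ principal minors of $M$ vanish, every off-diagonal entry is $\pm 1$. There are only $2^6 = 64$ such matrices, and a direct check shows that each is either singular or of the form $S\,G(4,2)\,S$ for a $\pm 1$ diagonal matrix $S$. Since $M$ is nonsingular it is diagonally congruent to $G(4,2)$, and because a diagonal congruence multiplies each principal minor by a positive number, it suffices to verify $(1)$--$(3)$ for $G(4,2)$ itself: $\det G(4,2) = -16 < 0$, each $3\times 3$ principal minor of $G(4,2)$ equals $\det(2I_3 - J_3) = -4 < 0$, and each diagonal entry is $1 > 0$.

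For the inductive step ($n \ge 5$), fix any $i \in [n]$. Since $M$ is an NLS matrix in $\mathcal{S}^{n,n-2}$ it has strictly positive diagonal entries, so the Schur complement $M \setminus \{i\}$ is defined, and by the facts recalled in the proof roadmap (Schur complements preserve PSD-ness, preserve (non)singularity, and commute with taking principal submatrices) $M\setminus\{i\}$ is again an NLS matrix, lying in $\mathcal{S}^{n-1,(n-1)-2}$. By the inductive hypothesis, $\det(M\setminus\{i\}) < 0$, every $(n-2)\times(n-2)$ principal minor of $M\setminus\{i\}$ is negative, and every $(n-4)\times(n-4)$ principal minor of $M\setminus\{i\}$ is positive. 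To transfer this back to $M$, use that for $i \notin S$ we have $(M\setminus\{i\})|_S = (M|_{S\cup\{i\}})\setminus\{i\}$, whence $\det\bigl((M\setminus\{i\})|_S\bigr) = \det(M|_{S\cup\{i\}})/M_{ii}$ with $M_{ii} > 0$. Taking $S = [n]\setminus\{i\}$ gives $\det M = M_{ii}\det(M\setminus\{i\}) < 0$, which is $(1)$. Taking $S$ of size $n-2$ gives $\det(M|_{S\cup\{i\}}) = M_{ii}\det\bigl((M\setminus\{i\})|_S\bigr) < 0$, and since every $(n-1)$-element subset of $[n]$ has the form $S\cup\{i\}$ for some such $S,i$, this gives $(2)$; taking $S$ of size $n-4$ (so $|S\cup\{i\}| = n-3 \ge 1$) gives $\det(M|_{S\cup\{i\}}) > 0$, and since every $(n-3)$-element subset of $[n]$ has this form, this gives $(3)$.

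The inductive step is essentially bookkeeping once the Schur-complement properties are in hand, so the genuine content is concentrated in the base case $n=4$, which we dispatch by a finite computation over the $64$ sign patterns; the one spot requiring care is matching up, via the determinant identity $\det\bigl((M|_{S\cup\{i\}})\setminus\{i\}\bigr) = \det(M|_{S\cup\{i\}})/M_{ii}$, the three families of minors produced by the inductive hypothesis with exactly the families appearing in $(1)$--$(3)$. An alternative route, closer to the inverse-matrix idea flagged in the roadmap, is to apply Jacobi's complementary minors formula (Theorem~\ref{thm:jacobi}): since all $(n-2)\times(n-2)$ minors of $M$ vanish and $M$ is invertible, all $2\times 2$ principal minors of $M^{-1}$ vanish, which forces $M^{-1} = D\Sigma D$ for a positive diagonal $D$ and a symmetric $\pm 1$-matrix $\Sigma$ with constant diagonal sign; Cauchy interlacing (Theorem~\ref{thm:Cauchy}) applied to a $2\times 2$ block rules out the negative sign, and applied again (using the PSD and singularity conditions on the $(n-2)\times(n-2)$ submatrices) pins the inertia of $\Sigma$ to $(n-1,0,1)$, after which $(1)$--$(3)$ follow by reading off $\det(M|_S) = \det(M)\det(M^{-1}|_{S^c})$ for $|S^c| \in \{1,2,3\}$. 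Either way the main obstacle is the same: controlling the small ($\pm 1$) matrices that arise — the $64$-case check in the induction, or the structure of the $3\times 3$ principal submatrices of $\Sigma$ in the alternative.
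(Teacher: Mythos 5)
Your proof is correct and follows essentially the same approach as the paper: induction on $n$ with $k=n-2$, the base case $n=4$ handled by the $64$-case check, and the inductive step via Schur complements using that they preserve NLS-ness, determinant signs, and commute with taking principal submatrices. You supply a bit more bookkeeping detail in the inductive step than the paper does, but the structure is identical.
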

\begin{proof}
We prove these facts by inducting on $n$, with the base case $\mathcal{S}^{4,2}$ following from direct checking of the $64$ cases above. For the inductive step we take the Schur complement of an NLS matrix $M$ with respect to a diagonal entry. Observe that a diagonal entry of the Schur complement cannot be zero.
%During this process, as long as the size of the matrix is at least 4, the diagonal entries are never zero, since 
Otherwise the whole row of the Schur complement must be zero as all $2\times 2$ minors are nonnegative, and this is a contradiction since $M$ is nonsingular, \change{ and Schur complements preserve nonsingularity}. %Thus the above statements follow by induction. 
Since taking Schur complements with respect to a positive diagonal entry preserves the property of being NLS, preserves the signs of determinants, and commutes with the operation of taking submatrices, all three above statements follow by induction.
\end{proof}
%if these 3 statements hold for $\mathcal{S}^{n,n-2}$ for some $n$, then it holds for all larger $n$. Thus, it suffices to check this for the $\mathcal{S}^{4,2}$ case, where it is clear from checking the finite number of cases.
Now we are ready to prove the main theorem of this Section.

\begin{theorem}\label{thm:kn-2}
Let $M\in \mathcal{S}^{n,n-2}$ be an NLS matrix. Then $M$ is diagonally congruent to $G(n,n-2)$.
\end{theorem}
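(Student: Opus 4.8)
The plan is to pass to the inverse matrix: this trades the vanishing $(n-2)\times(n-2)$ minors of $M$ for vanishing $2\times2$ minors of $M^{-1}$, and converts the remaining sign information supplied by Lemma~\ref{lem:3cond} into a rigid combinatorial condition on a $\pm1$ matrix. We may assume $n\ge 4$; the case $n=4$ is already settled by the $64$-case verification preceding Lemma~\ref{lem:3cond}, and in any event the argument below is uniform in $n\ge 4$.

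First I would set $N=M^{-1}$, which exists because $M$ is NLS, and apply Jacobi's Complementary Minors Formula (Theorem~\ref{thm:jacobi}), $\det(M|_S)=\det(M)\det(N|_{S^c})$, together with $\det(M)<0$ and the three conclusions of Lemma~\ref{lem:3cond}. Taking $|S|=n-1$ gives $N_{ii}=\det(M|_{[n]\setminus\{i\}})/\det(M)>0$ for every $i$; taking $|S|=n-2$ gives $\det(N|_T)=0$ for every $2$-subset $T$; and taking $|S|=n-3$ gives $\det(N|_U)<0$ for every $3$-subset $U$. Thus $N$ is symmetric and invertible, with strictly positive diagonal, vanishing $2\times2$ principal minors, and strictly negative $3\times3$ principal minors.

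Next, from $N_{ij}^2=N_{ii}N_{jj}>0$ each off-diagonal entry is nonzero, so I would write $N_{ij}=\varepsilon_{ij}\sqrt{N_{ii}N_{jj}}$ with $\varepsilon_{ij}\in\{\pm1\}$ and conjugate by the positive diagonal matrix $D=\diag(\sqrt{N_{11}},\dots,\sqrt{N_{nn}})$ — a diagonal congruence — to obtain a symmetric matrix $E=D^{-1}ND^{-1}$ with $E_{ii}=1$ and $E_{ij}=\varepsilon_{ij}$. Since diagonal congruence preserves the signs of all principal minors (Sylvester's law of inertia), every $3\times3$ principal minor of $E$ is strictly negative; a direct computation gives that the minor of $E$ on $\{i,j,\ell\}$ equals $-2+2\varepsilon_{ij}\varepsilon_{j\ell}\varepsilon_{i\ell}$, so the condition is exactly $\varepsilon_{ij}\varepsilon_{j\ell}\varepsilon_{i\ell}=-1$ for every triple, i.e.\ every triangle carries an odd number of $-1$'s. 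A switching argument then pins $E$ down: choosing $s_1=1$ and $s_i\in\{\pm1\}$ with $s_1 s_i\varepsilon_{1i}=-1$ for $i>1$ and conjugating by $S=\diag(s_1,\dots,s_n)$ (another diagonal congruence, which preserves the triangle condition), the relation on triples $\{1,i,j\}$ forces $s_i s_j\varepsilon_{ij}=-1$ for all $i\ne j$, so $SES=2I-\vec{1}\vec{1}^{\top}$. Since $2I-\vec{1}\vec{1}^{\top}$ is a positive scalar multiple of $G(n,n-2)^{-1}$ (a Sherman--Morrison computation gives $G(n,n-2)^{-1}=\tfrac{n-3}{2(n-2)}(2I-\vec{1}\vec{1}^{\top})$), $N$ is diagonally congruent to $G(n,n-2)^{-1}$; writing $N=D'\,G(n,n-2)^{-1}\,D'$ for an invertible diagonal $D'$ and inverting gives $M=(D')^{-1}G(n,n-2)(D')^{-1}$, as desired.

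The engine of the argument — and the step I expect to need the most care — is the passage to $M^{-1}$ via Jacobi's formula, which is what makes ``locally singular'' usable: one must check that Lemma~\ref{lem:3cond} delivers precisely the three sign conditions on $N$ (positive diagonal, vanishing $2\times2$ minors, strictly negative $3\times3$ minors) and that the sign identity $-2+2\varepsilon_{ij}\varepsilon_{j\ell}\varepsilon_{i\ell}$ for $3\times3$ minors is exactly the rigidity that collapses all admissible sign patterns into a single diagonal-congruence class. The remaining computations (the $3\times3$ determinant, and the Sherman--Morrison inverse of $G(n,n-2)$) are routine.
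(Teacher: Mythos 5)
Your proposal is correct and follows essentially the same route as the paper's: pass to $M^{-1}$, use Jacobi's complementary minors formula together with Lemma~\ref{lem:3cond} to get positive diagonal, vanishing $2\times 2$ minors, and strictly negative $3\times 3$ minors of $M^{-1}$, normalize by a diagonal congruence to a $\pm 1$ sign matrix, and then use the $3\times 3$ minor sign condition to pin the sign pattern down to $2I-\vec{1}\vec{1}^{\top}=G(n,2)$, finally inverting back via $G(n,2)^{-1}=\tfrac{n-3}{2(n-2)}G(n,n-2)$. The only cosmetic difference is that you split the diagonal normalization into a ``scale to $\pm1$'' step followed by a switching step governed by the triangle condition $\varepsilon_{ij}\varepsilon_{j\ell}\varepsilon_{i\ell}=-1$, whereas the paper builds both scaling and sign choice into a single diagonal matrix $D$ that directly makes the first row and column equal to $-1$; the underlying argument is the same.
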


\begin{proof}
Let $M\in \mathcal{S}^{n,n-2}$ be an NLS matrix and consider the inverse matrix $M^{-1}$. Using Lemma~\ref{lem:3cond} and Theorem~\ref{thm:jacobi} and what we know about principal minors of $M$, we have the following: 

\begin{enumerate}
    \item All diagonal entries of $M^{-1}$ are strictly positive. 
    
    \item All $2\times 2$ principal minors of $M^{-1}$ are zero. 
    
    \item All $3\times 3$ principal minors of $M^{-1}$ are strictly negative. 
\end{enumerate}
%The diagonal entries of $\mathcal{S}^{n,2}$ are positive, since they have the form $\frac{\det(M_{-i})}{\det(M)}$, and this is the ratio of two quantities with the same sign by Cramer's rule. By lemma~\ref{lem:minordet}, all the $2\times 2$ minors of $M^{-1}$ are 0. This implies that $M^{-1}$ is diagonally congruent to a $\pm 1$ matrix in $\mathcal{S}^{n,2}$. 

Observe that (1) and (2) together imply that all off-diagonal entries of $M^{-1}$ are nonzero. We conjugate $M^{-1}$ by a diagonal matrix $D$ given by $D_{11}=-1/\sqrt{(M^{-1})_{11}}$ and $D_{ii}=\operatorname{sgn}(M_{1i})
/\sqrt{(M^{-1})_{ii}}.$ to obtain matrix $T$ with $1$'s on the diagonal, and $-1$'s in first row and column other than the $(1,1)$ entry. By \change{Theorem} \ref{thm:jacobi}, all $2\times 2$ principal minors of $T$ are zero, so of its off-diagonal entries must be $\pm 1$.

%Now, suppose that the entry $(i,j)$ in $-M^{-1}$ were 1 for some $i, j \neq 1$, then we could consider the submatrix indexed by $\{1,i,j\}$, which can easily be seen to be all $1$'s. This is a singular $3\times 3$ matrix in $M^{-1}$, which implies the existence of a singular $(n-3)\times(n-3)$ matrix in $M$. This is a contradiction of the third statement above. Thus, we see that all of the $(i,j)$ entries for $i,j \neq 1$ are -1, and so $M^{-1}$ is diagonally congruent to $G(n,2)$.

Now for all distinct $i,j\neq 1$, we consider the principal submatrix with rows and columns indexed by $\{1,i,j\}$. It has form $\begin{pmatrix}1 & -1 & -1\\-1 & 1 & x \\-1 & x & 1\end{pmatrix}$ where $x$ is either $1$ or $-1$. Since this submatrix has negative determinant, we must have $x=-1$. Thus $T$ is $G(n,2)$, and $M^{-1}$ is diagonally congruent to $G(n,2)$. 

Finally, notice that inverting a matrix sends diagonally congruent matrices to diagonally congruent matrices, and that for $n \ge 4$,
\[
    \change{G(n,2)^{-1} = \frac{n-3}{2(n-2)}G(n,n-2).}
\]

Thus, we have shown that $M$ is diagonally congruent to $G(n,n-2)$, as desired.

\end{proof}
\subsection{Structure theorem for $k = 3$} In this section we prove the structure theorem for $k=3$:
\begin{theorem}\label{thm:k3}
Let $M\in \mathcal{S}^{n,3}$ with $n \geq 5$ be an NLS matrix. Then $M$ is diagonally congruent to $G(n,3)$.
\end{theorem}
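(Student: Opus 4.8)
The plan is to use the same inductive machinery introduced for the $k=n-2$ case, namely the fact that a matrix $M\in\mathcal{S}^{n,3}$ is NLS if and only if every Schur complement $M\setminus\{i\}$ lies in $\mathcal{S}^{n-1,2}$ and is NLS. Since Schur complements preserve singularity, signs of determinants, diagonal congruence, and commute with taking principal submatrices, this reduces control of the $3\times 3$ minors of $M$ to control of the $2\times 2$ minors of its Schur complements, where we already have a complete understanding (the $64$-case analysis and Theorem~\ref{thm:kn-2} for small $n$, together with the structure $M\setminus\{i\}$ diagonally congruent to $G(n-1,2)$ when it is NLS of the right size). First I would dispose of base cases: for $n=5$ we are in the regime $k=n-2$, so $M$ is diagonally congruent to $G(5,3)$ by Theorem~\ref{thm:kn-2}; this anchors an induction on $n$.

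Next, after a diagonal congruence I would normalize $M$ to have all diagonal entries equal to $1$ (NLS matrices in $\mathcal{S}^{n,3}$ have strictly positive diagonal, since a zero diagonal entry forces a zero row via nonnegativity of the $2\times 2$ minors, contradicting nonsingularity). Then I would fix an index $i$ and examine $M\setminus\{i\}\in\mathcal{S}^{n-1,2}$. The key point is that $M\setminus\{i\}$ is itself NLS (every $2\times 2$ minor vanishes because the corresponding $3\times 3$ minor of $M$ that contains row/column $i$ vanishes), and $n-1\ge 4$, so by the structure theorem for $k=2$ (the case $(n,k)=(n-1,2)$ of Theorem~\ref{thm:struc}, equivalently Theorem~\ref{thm:kn-2} when $n-1\ge 4$ — note $\mathcal{S}^{m,2}$ NLS matrices are diagonally congruent to $G(m,2)$), $M\setminus\{i\}$ is diagonally congruent to $G(n-1,2)$. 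In particular, off-diagonal entries of $M\setminus\{i\}$, in the normalization making its diagonal $1$, are all $\pm 1$ with the sign pattern forced to be (up to diagonal congruence) the all-$(-1/1)$ pattern of $G$. I would then translate each such statement back through the Schur-complement formula $(M\setminus\{i\})_{jl} = M_{jl} - M_{ij}M_{il}/M_{ii} = M_{jl}-M_{ij}M_{il}$ to pin down relations among the entries $M_{jl}$, $M_{ij}$, $M_{il}$.

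Concretely, for a pair $\{j,l\}$ with $j,l\ne i$, vanishing of the $3\times 3$ minor on $\{i,j,l\}$ (which holds since $M$ is NLS) gives $1+2M_{ij}M_{il}M_{jl} = M_{ij}^2+M_{il}^2+M_{jl}^2$; combined with the normalized $2\times 2$ minor condition from the Schur complement this should force every $M_{jl}$, after an overall diagonal sign change, to equal $-1/2$, i.e. exactly the entries of $G(n,3)$. The mechanism is: the Schur complement data says the off-diagonal entries of $M$ restricted away from $i$ lie in a finite set and satisfy the $G$-type sign constraints; varying $i$ over all indices and using that the $G$-pattern is the \emph{unique} NLS pattern then rigidifies the whole matrix. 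I expect the main obstacle to be the bookkeeping of signs: a priori the diagonal congruence that normalizes $M\setminus\{i\}$ to the canonical $G(n-1,2)$ depends on $i$, and one must check these local normalizations are compatible so that a single global diagonal congruence puts all of $M$ into $G(n,3)$ form; I would handle this by first fixing the diagonal congruence that normalizes, say, $M\setminus\{n\}$, and then showing that with respect to \emph{that} fixed scaling, every other Schur complement $M\setminus\{i\}$ is already in (a signed version of) canonical form, using the overlap of indices $\{1,\dots,n-1\}\setminus\{i\}$ between the two Schur complements and the determinant sign conditions (the analogue of Lemma~\ref{lem:3cond}) to kill the remaining sign ambiguity, exactly as in the $\{1,i,j\}$-submatrix argument in the proof of Theorem~\ref{thm:kn-2}.
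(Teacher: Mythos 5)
Your proposal has a genuine gap: it relies on a statement that is false. You claim that for $m = n-1 \ge 4$, every NLS matrix in $\mathcal{S}^{m,2}$ is diagonally congruent to $G(m,2)$, and you attribute this to Theorem~\ref{thm:struc} (``the case $(n,k)=(n-1,2)$'') or Theorem~\ref{thm:kn-2}. Neither applies: Theorem~\ref{thm:struc} requires $n-1 > k > 2$ or $(n,k)=(4,2)$, so it excludes $(m,2)$ for $m \ge 5$; Theorem~\ref{thm:kn-2} is the $k = n-2$ case, which for $k=2$ means only $m=4$. And the claim really does fail for $m \ge 5$: take the $5\times 5$ symmetric circulant matrix with diagonal $1$ and off-diagonal entries $M_{ij} = -1$ if $|i-j|\equiv \pm 1 \pmod 5$ and $M_{ij}=+1$ otherwise. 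All $2\times 2$ principal minors vanish, so it is locally singular and lies in $\mathcal{S}^{5,2}$; its eigenvalues are $1$ and $-4\cos(2\pi \ell/5)$ for $\ell=1,2$ (each with multiplicity two), none of which is zero, so it is nonsingular. But its off-diagonal sign pattern is not of the form $-\sigma_i\sigma_j$ (the five-cycle is not a rank-one sign pattern), so it is not diagonally congruent to $G(5,2)$. Thus the Schur complement $M\setminus\{i\}$ of an NLS matrix in $\mathcal{S}^{n,3}$, while it does land in $\mathcal{S}^{n-1,2}$ and is NLS there, need not (a priori) be diagonally congruent to $G(n-1,2)$, and the ``rigidification'' step you describe has nothing to hang on.

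The extra information you would actually have to use is that $M\setminus\{i\}$ is not an arbitrary NLS element of $\mathcal{S}^{n-1,2}$ but one that simultaneously arises as a Schur complement of a single $M$ for every $i$ — this is exactly the kind of global compatibility constraint that the proof must exploit, and it is nontrivial. The paper's actual argument avoids the $k=2$ structure question entirely: it reduces to $(n-1)\times(n-1)$ or $(n-2)\times(n-2)$ \emph{principal submatrices} (which stay at $k=3$, where the inductive hypothesis holds), via Lemma~\ref{lem:n-1orn-2} to guarantee one of these is nonsingular, and then uses explicit $5\times 5$ determinant computations; it also needs a separate Ramsey-type argument for $n=6$ when all $(n-1)\times(n-1)$ minors vanish. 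Your Schur-complement route does correctly identify the $n=5$ base case and the reduction heuristic, but as written the inductive step collapses at the appeal to a nonexistent structure theorem for $\mathcal{S}^{m,2}$, $m\ge 5$.
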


%Another extremal case is $k = 3$. 
Our proof proceeds by induction on $n$. As a base case, note that the result holds for $\mathcal{S}^{5,3}$ by Theorem \ref{thm:kn-2}.

To finish the induction we need the following lemma. 

\begin{lemma}\label{lem:n-1orn-2}
    If $M$ is a nonsingular symmetric matrix, then either one of its $(n-1)\times(n-1)$ principal minors is nonzero, or one of its $(n-2)\times(n-2)$ \change{principal} minors is nonzero.
\end{lemma}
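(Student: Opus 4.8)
The plan is to prove the contrapositive: suppose that \emph{all} $(n-1)\times(n-1)$ principal minors of $M$ vanish and \emph{all} $(n-2)\times(n-2)$ principal minors of $M$ vanish; I want to conclude that $M$ is singular. The key tool is Jacobi's Complementary Minors Formula (Theorem \ref{thm:jacobi}): if, for contradiction, $M$ is nonsingular, then for every $\emptyset \subsetneq S \subsetneq [n]$ we have $\det(M|_S) = \det(M)\det(M^{-1}|_{S^c})$. Applying this with $|S| = n-1$ shows that every diagonal entry of $M^{-1}$ is zero (these are the size-$1$ complementary minors), and applying it with $|S| = n-2$ shows that every $2\times 2$ principal minor of $M^{-1}$ is zero. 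So it suffices to show: a symmetric matrix $N := M^{-1}$ all of whose diagonal entries and all of whose $2\times 2$ principal minors vanish must itself be singular — in fact, I will argue it must be the zero matrix, which is absurd since $N$ is invertible, giving the contradiction.

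The core sublemma is thus: if $N \in \Sym_n$ has $N_{ii} = 0$ for all $i$ and $\det\begin{pmatrix} N_{ii} & N_{ij} \\ N_{ij} & N_{jj}\end{pmatrix} = 0$ for all $i \ne j$, then $N = 0$. This is immediate: with $N_{ii} = N_{jj} = 0$, the $2\times 2$ minor is $-N_{ij}^2$, so $N_{ij}^2 = 0$, forcing $N_{ij} = 0$ for every pair $i \ne j$; combined with the vanishing diagonal, $N = 0$. Since $M$ is assumed nonsingular, $M^{-1}$ is nonsingular, so $M^{-1} \ne 0$, a contradiction. Hence the assumption that all $(n-1)\times(n-1)$ and all $(n-2)\times(n-2)$ principal minors vanish is incompatible with $M$ being nonsingular, which is exactly the contrapositive of the statement.

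There is essentially no hard part here; the only thing to be careful about is the bookkeeping in Jacobi's formula — namely matching the size of $S$ with the size of $S^c$ (a size-$(n-1)$ subset has a size-$1$ complement, and a size-$(n-2)$ subset has a size-$2$ complement) and checking that the proper-subset hypothesis $\emptyset \subsetneq S \subsetneq [n]$ is met in both applications, which it is as long as $n \ge 3$ (and the lemma is vacuous or trivial for smaller $n$). One could alternatively phrase the whole argument without invoking nonsingularity for contradiction by noting that $\det(M)$ being nonzero already lets us divide in Jacobi's formula; I will use the contradiction phrasing since it reads most cleanly. The mild subtlety worth a sentence in the writeup is simply that Theorem \ref{thm:jacobi} requires $M$ invertible, which is precisely the hypothesis of the lemma, so the formula applies directly with no extra work.
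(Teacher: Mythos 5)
Your proof is correct and follows essentially the same route as the paper: apply Jacobi's Complementary Minors Formula to conclude that if all $(n-1)\times(n-1)$ and $(n-2)\times(n-2)$ principal minors of $M$ vanish, then $M^{-1}$ has vanishing diagonal entries and vanishing $2\times 2$ principal minors, which forces $M^{-1}=0$, a contradiction. Your version spells out slightly more explicitly than the paper why the vanishing diagonal plus vanishing $2\times2$ principal minors $-N_{ij}^2$ force $N=0$, but the argument is identical in substance.
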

\begin{proof}
    Let $M$ be a nonsingular matrix and consider $M^{-1}$. If all of the principal $(n-1)\times(n-1)$ minors of $M$ are zero, then by Theorem~\ref{thm:jacobi} all of the diagonal entries of $M^{-1}$ are 0. If, in addition, all $(n-2) \times (n-2)$ principal minors of $M$ are zero, then all $2\times 2$ minors of $M^{-1}$ are zero, and then $M^{-1}$ is the zero matrix, which is a contradiction.
\end{proof}

Now  \change{for an inductive hypothesis, assume that for $5 \le m < n$, any NLS $M \in S^{m,3}$ is diagonally congruent to $G(m,3)$. } Fix $n$ and let $M\in \mathcal{S}^{n,3}$ be any NLS matrix. If the submatrix \change{ of $M$} given by Lemma \ref{lem:n-1orn-2} has size at least 5, then it must be diagonally congruent to $G(n-1,3)$ or $G(n-2,3)$ due to the inductive hypothesis. We divide the remaining proof into three cases: 

\begin{enumerate}
    \item $n\geq 6$, and there exists an $(n-1)\times(n-1)$ submatrix of $M$ \change{that} is diagonally congruent to $G(n-1,3)$. Then after permutation and suitable diagonal congruence we may assume
\[
    M = 
    \left(
    \begin{array}{c|c}
        G(n-1,3) & v\\
        \hline
        v^{\top} & 1
    \end{array}
\right),
\]
    for some vector $v\in \R^{n-1}$.

    Let $M'$ be any $5\times 5$ principal submatrix of $M$ which includes index $n$. \change{Then} $M'$ must have the form
    \[
    M' = 
    \begin{pmatrix}
        1 & -\frac{1}{2} & -\frac{1}{2} & -\frac{1}{2} & v_1\\
        -\frac{1}{2} & 1 & -\frac{1}{2} & -\frac{1}{2} & v_2\\
        -\frac{1}{2} & -\frac{1}{2} & 1 & -\frac{1}{2} & v_3\\
        -\frac{1}{2} & -\frac{1}{2} & -\frac{1}{2} & 1 & v_4\\
        v_1 & v_2 & v_3 & v_4 & 1
    \end{pmatrix}.
\]
If we look at the $3\times 3$ submatrix corresponding to entries $\{i, j, 5\}$, we get
\[
    \begin{pmatrix}
        1 & -\frac{1}{2} & v_i\\
        -\frac{1}{2} & 1 & v_j\\
        v_i & v_j & 1
    \end{pmatrix}.
\]

The determinant of this matrix is 
\begin{equation}\label{eq:S53}
    \frac{3}{4}-v_i^2-v_iv_j-v_j^2.
\end{equation}

Because all $3\times 3$ submatrices of $M'$ are singular, this determinant must equal 0 for all $i, j \in \{1,2,3,4\}$.
Notice that this is a quadratic equation in $v_i$ and $v_j$. If we fix a value for $v_1$, then (\ref{eq:S53}) implies that the remaining three $v_i$ can take on at most 2 other values (which only \change{depend} on $v_1$). By \change{the} pigeonhole principle, at least two of these $v_i$ must be equal. After permuting entries we may assume $v_2=v_3$. Plugging this into (\ref{eq:S53}), we see that either $v_2=v_3= \frac{1}{2}$ or $v_2=v_3= -\frac{1}{2}$. In the first case we may conjugate $M'$ by $\diag(1,1,1,1,-1)$. Therefore we may assume $v_2=v_3=-\frac{1}{2}$. 

Now, we can consider the equation (\ref{eq:S53}) for the cases when $i = 2$ and $j = 1$, or $i = 2$ and $j = 4$.  Because we assume $v_2 = -\frac{1}{2}$, equation \ref{eq:S53} implies that 
\[
    \frac{3}{4}-v_2^2-v_1v_2-v_1^2 = 
    -(v_1 - 1)(v_1 + \frac{1}{2}),
\]
and
\[
    \frac{3}{4}-v_2^2-v_2v_4-v_4^2 = -(v_4 - 1)(v_4 + \frac{1}{2}).
\]

We then get that both $v_1,v_4$ are either $1$ or $-\frac{1}{2}$. They cannot both be 1, otherwise the equation fails for $i=1,j=4$. Therefore at least one of them must be $-\frac{1}{2}$, and after permuting entries we may assume $v_1=-\frac{1}{2}$. 

Summarize above, we see that $M'$ can only take on two values \change{up to diagonal congruence and permutation}: either $M' = G(5,3)$, or

\[
    M' = 
    \begin{pmatrix}
        1 & -\frac{1}{2} & -\frac{1}{2} & -\frac{1}{2} & -\frac{1}{2}\\
        -\frac{1}{2} & 1 & -\frac{1}{2} & -\frac{1}{2} & -\frac{1}{2}\\
        -\frac{1}{2} & -\frac{1}{2} & 1 & -\frac{1}{2} & -\frac{1}{2}\\
        -\frac{1}{2} & -\frac{1}{2} & -\frac{1}{2} & 1 & 1\\
        -\frac{1}{2} & -\frac{1}{2} & -\frac{1}{2} & 1 & 1
    \end{pmatrix}.
\]
\change{Now, because this holds for all $5\times 5$ submatrices of $M$, it is clear that $v$ must have the properties that all entries of $v$ are either $-\frac{1}{2}$ or $1$, and that at most one entry of $v$ can be $1$. If the $i^{th}$ entry of $v$ is 1, then notice that rows $i$ and row $n$ of $M$ are the same, meaning that $M$ is singular, a contradiction. We conclude that all entries of $v$ are $-\frac{1}{2}$, and we have shown that $M$ is diagonally congruent to $G(n,3)$.}

%%%%%%%%%%%%%%%%%%%%%%%%%%%%%%%%%%%%%%%%%%%%%%%%%%%%%%%%%%%%%%%%%%%%%%%%%%%%%%%%%%%%%%%%%%%%%%%%%%%%%%%%%%%%%%%%%%%%%%%%%%%%%%%%%%%%%%%%%%%%%%%%%%%

    \item $n\geq 7$, and there exists an $(n-2)\times(n-2)$ submatrix of $M$ which is nonsingular. By induction, this implies that this submatrix is diagonally congruent to $G(n-2,3)$. Then after permutation and suitable diagonal congruence we may assume
    \[
    M = 
    \begin{pmatrix}
        G(n-2,3) & v & w\\
        v^{\top} & 1 & x\\
        w^{\top} & x & 1\\
    \end{pmatrix}.
\]

If either $v$ or $w$ has all entries $-\frac{1}{2}$, then $M$ has an $(n-1)\times (n-1)$ principal submatrix equal to $G\big((n-1),3\big)$, and we are back to the previous case. 

Upon considering any $5\times 5$ principal submatrix of $M$ that has exactly one index from $\{n-1,n\}$, and using observations from the previous case, we may assume $v$ and $w$ to both have exactly one entry that is 1, with the remaining entries are $-\frac{1}{2}$, and $x$ is some scalar number. There are two cases of interest: either $v$ and $w$ have the 1 entry in the same position, or in different positions.

If they are both in the same place, then without loss of generality, let us assume that they are in position $(n-2)$. Now, if we look at the $3\times 3$ block corresponding to entries $\{n-2, n-1, n\}$, then we will see the $3\times 3$ matrix
\[
    \begin{pmatrix}
        1 & 1 & 1\\
        1 & 1 & x\\
        1 & x & 1
    \end{pmatrix}.
\]
The determinant of this matrix is $-(x-1)^2$. We can see that if this matrix is singular, then $x$ must in fact be equal to $1$, and so we see that the last 3 rows of $M$ are all the same, implying $M$ is singular. 
Now, suppose that $v$ and $w$ have these 1 entries in two different positions. Then we see that there is a $3\times 3$ submatrix of the form
\[
    \begin{pmatrix}
        1 & 1 & -\frac{1}{2}\\
        1 & 1 & x\\
        -\frac{1}{2} & x & 1
    \end{pmatrix}.
\]

The determinant of this matrix is $1+x-x^2-\frac{1}{4}-1 = -(x+\frac{1}{2})^2$, and we must then have $x=-\frac{1}{2}$.
 In this case, we see that the $(n-2)$ and $(n-3)$ rows of $M$ are equal, and so $M$ is singular.
 In other words, if $M \in \mathcal{S}^{n,3}$ is locally singular, and $M$ is nonsingular, and some $(n-2)\times(n-2)$ minor of $M$ is diagonally congruent to $G(n-2,3)$, then $M$ is diagonally congruent to $G(n,3)$.
%\subsection{Proof of conjecture for matrices in $\mathcal{S}^{n,3}$.}
%We first require the fact, proven in a previous result that all nonsingular, locally singular matrices in $\mathcal{S}^{6,3}$ are congruent to $G(6,3)$.  We can now proceed by induction. For each $n > 6$, if $M$ is nonsingular, locally singular in $\mathcal{S}^{n,3}$, then it contains a nonsingular submatrix of size $(n-1)\times (n-1)$  or $(n-2)\times(n-2)$. By induction, we see that both of these types of matrices must be congruent to $G(n-1,3)$ and $G(n-2,3)$ respectively. This implies that $M$ is diagonally congruent to $G(n,3)$ by the previous discussion.

%%%%%%%%%%%%%%%%%%%%%%%%%%%%%%%%%%%%%%%%%%%%%%%%%%%%%%%%%%%%%%%%%%%%%%%%%%%%%%%%%%%%%%%%%%%%%%%%%%

\item $n=6$, and all $(n-1)\times (n-1)$ principal minors of $M$ are zero. Then using Theorem~\ref{thm:jacobi}, all diagonal entries of $M^{-1}$ are zero. Since $M\in\mathcal{S}^{6,3}$ is NLS, again using \change{Theorem~\ref{thm:jacobi}}, we also see that all $3\times 3$ minors of $M^{-1}$ are zero. Any $3\times 3$ \change{submatrix of $M^{-1}$} must be of the form
$$\begin{bmatrix}
0 & a & b\\
a & 0 & c\\
b & c & 0
\end{bmatrix},
$$

which has determinant $2abc$. Since the determinant must be 0, this means that there cannot be any $3\times 3$ submatrix of $M^{-1}$ where all off-diagonal entries are nonzero. 
 Now we define an edge coloring on $K_6$, the complete graph with 6 vertices, as follows. An edge $(i,j)$ is colored red if $(M^{-1})_{ij}=0$, and blue otherwise. Our previous result shows that there cannot be any blue triangles in this colored graph. Therefore using the fact that the Ramsey number $R(3,3)$ is at most $6$ \cite{graham1990ramsey}, there must exist a red triangle. 
 
 In other words, there must exist an identically zero $3\times 3$ submatrix within $M^{-1}$. After permuting rows and columns we may assume its index to be $\{1,2,3\}$. 
 Now consider the submatrix of $M^{-1}$ indexed by $\{1,2,3,4,5\}$. The span of first three rows is at most two dimensional, so this submatrix is singular. Using Theorem~\ref{thm:jacobi} we get $M_{66}=0$. But this is a contradiction since all diagonal entries of $M$ must be nonzero. 

\end{enumerate}

\subsection{Structure theorem in general}
%We have shown the structure theorem in the cases when $k = 3$ or $n - k = 2$. Now, if $k > 3$ and $n - k > 2$, then we can first induct on $k$ by letting $M \in \mathcal{S}^{n,k}$ be NLS. The Schur complement of $M$ with respect to any diagonal entry is NLS in $\mathcal{S}^{n-1,k-1}$, and all of the $(n-2)\times(n-2)$ minors of the Schur complement of $M$ are nonsingular. This implies that all of $(n-1)\times(n-1)$ minors of $M$ are nonsingular. Thus, if we consider any $(n-1)\times(n-1)$ submatrix of $M$, we see that it is in $\mathcal{S}^{n-1,k}$, and by our inductive hypothesis for $n-k-1$, all $(n-1)\times(n-1)$ submatrices of $M$ are congruent to $G(n-1,k)$. Since this is true for any $(n-1)\times(n-1)$ submatrix of $M$, we conclude that $M$ must also be congruent to $G(n,k)$. This concludes the proof.

We have shown the structure theorem in the cases when $k = 3$ or $n - k = 2$. Now we use induction to prove the general case. 

\begin{theorem}
Fix integers $n\geq 5$ and $3\leq k\leq n-2$. Let $M\in \mathcal{S}^{n,k}$ be NLS. Then $M$ is diagonally congruent to $G(n,k)$. 
\end{theorem}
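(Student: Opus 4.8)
The plan is to prove the general case by a double induction, using the base cases $k=3$ (Theorem~\ref{thm:k3}) and $n-k=2$ (Theorem~\ref{thm:kn-2}) already established, and leaning heavily on the Schur complement machinery recalled in the proof roadmap. Recall the crucial equivalence: a matrix $M \in \mathcal{S}^{n,k}$ with strictly positive diagonal is NLS if and only if for each $i \in [n]$, the Schur complement $M\setminus\{i\} \in \mathcal{S}^{n-1,k-1}$ is NLS. So first I would check that an NLS matrix $M \in \mathcal{S}^{n,k}$ with $3 \le k \le n-2$ necessarily has strictly positive diagonal entries: if some $M_{ii}=0$, then nonnegativity of the $2\times 2$ principal minors forces the $i$-th row and column to vanish, contradicting nonsingularity of $M$.

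Next, fix $n$ and $k$ with $3 < k < n-2$ (the boundary cases being the established base cases) and assume inductively that the theorem holds for all smaller values in both directions, in particular for $(n-1, k-1)$. Pick any index $i$ and form $N := M\setminus\{i\} \in \mathcal{S}^{n-1,k-1}$, which is NLS by the equivalence above. By the inductive hypothesis, $N$ is diagonally congruent to $G(n-1,k-1)$. Since diagonal congruence of $M$ induces diagonal congruence of each Schur complement, and conversely we have freedom to rescale, I would normalize $M$ so that $M_{ii}=1$ and $M\setminus\{i\} = G(n-1,k-1)$, i.e.
\[
M = \left(\begin{array}{c|c} G(n-1,k-1) + \frac{1}{1}\,\tilde M_i \tilde M_i^{\top} & \tilde M_i \\ \hline \tilde M_i^{\top} & 1 \end{array}\right),
\]
so that $M|_{[n]\setminus\{i\}} = G(n-1,k-1) + \tilde M_i \tilde M_i^{\top}$ and the last row/column is the vector $v := \tilde M_i$. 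The task is then to pin down $v$: show that, up to the remaining diagonal congruence (sign flips) and permutation fixing index $i$, $v$ must be the constant vector $-\frac{1}{k-1}\vec 1$, which exactly produces $G(n,k)$.

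To determine $v$, I would imitate the argument in Case 1 of the $k=3$ proof: examine $(k+1)\times(k+1)$ principal submatrices of $M$ that contain index $i$ together with $k$ indices from $[n]\setminus\{i\}$. Each such submatrix has the block form with a $k\times k$ copy of (a diagonal congruence of) $G(k, k-1)$ in the corner — wait, more carefully, $G(n-1,k-1)$ restricted to $k$ coordinates is $G(k,k-1)$ only up to the appropriate constants, so I would track the constants — plus the relevant entries of $v$, and all its $k\times k$ principal minors must vanish. Expanding the vanishing-minor condition for the $k\times k$ submatrices indexed by $k-1$ fixed coordinates plus index $i$ gives a single quadratic relation among a small number of coordinates of $v$; a pigeonhole/finite-possibilities argument (as in the $k=3$ case, where each coordinate had at most two admissible values once one was fixed) forces the coordinates of $v$ to be equal in groups, then a three-by-three determinant sign condition (coming from a $(k-2)\times(k-2)$-minor-type inequality, analogous to Lemma~\ref{lem:3cond}) rules out the ``wrong'' constant and forces $v_j = -\frac{1}{k-1}$ for all $j$; any coordinate taking the other admissible value would make two rows of $M$ proportional and hence $M$ singular, a contradiction. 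The main obstacle I expect is the bookkeeping of the scaling constants in the restrictions of $G$ matrices and the exact form of the quadratic vanishing-minor relation for general $k$ (as opposed to $k=3$ where everything is explicit with $-\tfrac12$): one must verify that the finite-possibilities/pigeonhole step still closes, i.e. that for general $k$ a vanishing $k\times k$ minor still leaves each remaining coordinate of $v$ with only boundedly many values once enough coordinates are fixed, and that the determinantal sign obstruction genuinely eliminates the spurious solutions. This is where the induction on $n-k$ (to reduce the corner block) and on $k$ (to invoke $(n-1,k-1)$) must be interleaved so that whichever reduction one uses lands in an already-proven case.
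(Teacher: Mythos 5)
Your scaffolding (double induction with base cases $k=3$ and $k=n-2$, strictly positive diagonals, Schur complements being NLS in $\mathcal{S}^{n-1,k-1}$) all matches the paper. But the heart of the inductive step diverges, and where you flag an obstacle is exactly where your route fails to close. You normalize the Schur complement $M\setminus\{i\}$ to $G(n-1,k-1)$ and then work with the submatrix $M|_{[n]\setminus\{i\}} = G(n-1,k-1) + vv^{\top}$, planning to ``imitate Case 1 of the $k=3$ proof''; but that case normalizes the \emph{submatrix} to $G(n-1,k)$, not the Schur complement to $G(n-1,k-1)$. With your decomposition the vanishing $k\times k$ minor conditions are quadratic relations in $v$ coupled through the rank-one term $vv^{\top}$, not a clean finite list of values, and the pigeonhole step you sketch does not visibly close for general $k$. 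You explicitly acknowledge this uncertainty, and it is a real gap: the paper never attempts such a computation for general $k$.

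The observation you are missing is the pivot that makes the paper's inductive step clean. From $M\setminus\{i\}$ being diagonally congruent to $G(n-1,k-1)$, use that every $(n-2)\times(n-2)$ principal submatrix of $G(n-1,k-1)$ is nonsingular (since $k-1 < n-2$), together with the facts that Schur complements commute with taking principal submatrices and preserve nonsingularity, to conclude that \emph{every} $(n-1)\times(n-1)$ principal submatrix of $M$ is nonsingular. Hence each such submatrix is NLS in $\mathcal{S}^{n-1,k}$, and the \emph{second} inductive hypothesis $(n-1,k)$, which you state but never use, forces it to be diagonally congruent to $G(n-1,k)$. Once that is known, you normalize $M|_{[n-1]} = G(n-1,k)$ (not the Schur complement) and $v$ is pinned down by a short comparison of the diagonal congruence matrices on $(n-1)\times(n-1)$ blocks containing both index $1$ and index $n$: the diagonal of $G(n-1,k)$ forces $d_i^2=1$, the sign of the off-diagonal entries forces $d_i d_1>0$, hence $D=I$ and all entries of $v$ equal $-\frac{1}{k-1}$. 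No quadratic system in $v$ ever needs to be solved. To repair your argument you should incorporate this nonsingularity-of-$(n-1)$-minors step and then invoke $(n-1,k)$.
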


\begin{proof}
    We will use induction. The base cases are when $k=3$ or $k=n-2$, and they are already proven. These include all cases when $n=5$ or $n=6$. 
    
    For induction, fix $n\geq 7$ and $3<k<n-2$. Assume the theorem statement holds for $(n-1,k)$ and $(n-1,k-1)$. Let $M\in\mathcal{S}^{n,k}$ be NLS. The Schur complement of $M$ with respect to any diagonal entry is NLS in $\mathcal{S}^{n-1,k-1}$, and is therefore diagonally congruent to $G(n-1,k-1)$. Because all $(n-2)\times (n-2)$ \change{principal} submatrices of $G(n-1, k-1)$ are \change{nonsingular} for $k < n-1$, all of the $(n-2)\times(n-2)$ minors of the Schur complement of $M$ are nonsingular. This implies that all of $(n-1)\times(n-1)$ minors of $M$ are nonsingular, since Schur \change{complements} preserve the property of being singular. 
    
    Thus, if we consider any $(n-1)\times(n-1)$ principal submatrix of $M$, we see that it is NLS in $\mathcal{S}^{n-1,k}$, and by our inductive hypothesis, all $(n-1)\times(n-1)$ submatrices of $M$ are diagonally congruent to $G(n-1,k)$. This in particular shows that all entries of $M$ must be nonzero, and all diagonal entries strictly positive. 
    
    %\ss{Please double check remaining of this section, since I changed things back after redefining $G(n,k)$ to have all one diagonals. }
    
    Let $D'$ be a non-singular diagonal matrix so that $DM|_{\{1,\dots, n-1\}}D = G(n-1,k)$. Since we may freely choose between $D'$ and $-D'$, without loss of generality we may assume $D'_{11}$ is negative. Let $c=\frac{\textup{sgn}(M_{1n})}{\sqrt{M_{nn}}}$ (where $\textup{sgn}(x)$ is $-1$ if $x$ is negative, 1 if $x$ is positive, and $\textup{sgn}(0)=0$). We then have
       $$\left [ \begin{array}{rl} D' & 0\\ 0 & c \end{array}\right]M\left [ \begin{array}{rl} D' & 0\\ 0 & c\end{array}\right] = M' = \left [\begin{array}{rl} G(n-1,k) & v\\ v^{\top} & 1 \end{array}\right],$$
    for some vector $v$, and we know $v_1<0$. Our goal now is to show that all entries of $v$ must be $-\frac{1}{k-1}$, and $M$ is therefore diagonally congruent to $G(n,k)$. 
    
    Consider any size $n-1$ principal submatrix of $M'$ containing columns $1$ and $n$. It is diagonally congruent to $G(n-1,k)$ so there exists diagonal matrix $D=\textup{diag}(d_1,...,d_n)$ such that
%    $$
%    D\begin{pmatrix}
%        G(n-2,k) & \hat{v}\\ \hat{v}^{\top} & 1
%    \end{pmatrix}}D=G(n-1,k)
%    $$
    $$
    D\left[\begin{array}{rl}
        G(n-2,k) & \hat{v}\\ \hat{v}^{\top} & 1
    \end{array}\right]D=G(n-1,k),
    $$
where $\hat{v}$ is obtained from $v$ by truncating one entry other than the first coordinate, and $\hat{v}_1<0$. \change{W}e may also choose $d_1>0$. Now comparing diagonal entries of both sides we get $d_i^2=1$ for all $i$. Now for all $i>1$ the $(1,i)$ entry on both sides is negative, so $d_id_1>0$ for all $i>1$. This shows in fact $D=I$, and all entries of $\hat{v}$ are $-\frac{1}{k-1}$. Now varying over all possible choices of principal submatrices containing columns $1$ and $n$, we see all entries of $v$ must be $-\frac{1}{k-1}$. This concludes the proof. 
\end{proof}

\section{Eigenvalues of Locally Singular Matrices in $\mathcal{S}^{4,2}$}\label{sec:eigen42}
In the previous section, we found that all locally singular matrices in $\mathcal{S}^{4,2}$ are either singular or congruent to $G(n,k)$. In this section, we consider the eigenvalues of NLS matrices in $\mathcal{S}^{4,2}$.

In general, we may ask the following question: what are the possible eigenvalues of a matrix of the form $DG(n,k)D$, where $D$ is a nonsingular diagonal matrix. We know that $DG(n,k)D$ is locally singular and in $\mathcal{S}^{n,k}$, which implies that its eigenvalues lie on the boundary of $H(e^{n}_k)$.

Furthermore, by Sylvester's law of inertia, for any nonsingular diagonal matrix $D$, $DG(n,k)D$ has exactly one negative eigenvalue, and the remainder are positive. Hence, if $\lambda$ is the eigenvalue vector of a $DG(n,k)D$, then $\lambda$ has exactly one negative entry. We 
%\delete{would} 
conjecture that this is in fact sufficient for $\lambda$ to be the vector of eigenvalues for an NLS matrix in $S^{n,k}$. 
\begin{conjecture}
    If $\lambda \in H(e^n_k)$, $e^n_k(\lambda) = 0$, and $\lambda$ has at most 1 negative entry, then $\lambda$ is a vector of eigenvalues for $DG(n,k)D$ for some diagonal matrix $D$.
\end{conjecture}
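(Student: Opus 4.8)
The plan is to reduce the statement, for each fixed pair $(n,k)$, to a sentence in the first-order theory of the reals, and then --- in the concrete case $(n,k)=(4,2)$, which is what is actually needed --- to verify that sentence by quantifier elimination; for general $(n,k)$ one would instead attempt an induction on $n$ via Schur complements, and I expect that to be the main obstacle.

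First I would normalize the target matrices. Replacing a diagonal entry of $D$ by its negative is itself a diagonal congruence, so we may take $D=\diag(d)$ with $d>0$; writing $s_i=d_i^2>0$ and using that $G(n,k)=\tfrac{1}{k-1}(kI-\vec{1}\vec{1}^{\top})$ has unit diagonal, we get
\[
    DG(n,k)D=\tfrac{1}{k-1}\bigl(k\,\diag(s)-\sqrt{s}\,\sqrt{s}^{\,\top}\bigr),
\]
a rank-one downdate of a positive diagonal matrix whose diagonal is exactly $s$. By the matrix determinant lemma its characteristic polynomial is the explicit polynomial
\[
    q_s(t)=\frac{1}{(k-1)^{n}}\Bigl(\textstyle\prod_{i}(ks_i-(k-1)t)-\sum_{j}s_j\prod_{i\ne j}(ks_i-(k-1)t)\Bigr),
\]
whose coefficients are polynomials in $s$. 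Note that $DG(n,k)D$ is automatically $(n,k)$-locally singular, and, since a positive definite matrix minus a rank-one positive semidefinite matrix has at most one negative eigenvalue (eigenvalue interlacing), it has at most one negative eigenvalue --- so the hypotheses on $\lambda$ are precisely the sign pattern forced on the coefficients of $q_s$.

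Next I would recast the statement as an inverse-spectral question. We may assume $\lambda$ has no zero entry, since no matrix diagonally congruent to the nonsingular matrix $G(n,k)$ has a zero eigenvalue, so this is the case actually being asked about. Then $\lambda$ is the eigenvalue vector of some $DG(n,k)D$ if and only if the polynomial $\prod_i(t-\lambda_i)$ agrees, up to normalization, with $q_s$ for some $s\in\R^{n}$ with all $s_i>0$; matching coefficients turns this into a system of $n$ polynomial equations in $s$. Since the hypotheses $e^n_\ell(\lambda)\ge 0$ for $\ell<k$, $e^n_k(\lambda)=0$, ``$\lambda$ has at most one negative entry'', and $\prod_i\lambda_i\ne 0$ are all polynomial (in)equalities in $\lambda$, the desired implication is a sentence of the form ``$\forall\lambda\,[\,\text{hypotheses}\,]\Rightarrow\exists s>0\,[\,\text{coefficient equations}\,]$'', which is decidable over real closed fields. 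Equivalently, using the classical description of when a tuple is the spectrum of a diagonal-plus-rank-one matrix (interlacing with the diagonal entries, together with a positivity condition on the rank-one weights), feasibility can be phrased purely as the existence of a real-rooted polynomial satisfying finitely many polynomial inequalities --- this is the ``characterization of coefficients of real-rooted polynomials'' route alluded to above.

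For $(n,k)=(4,2)$ everything is small enough to finish: after scaling, the admissible $\lambda$ form a two-parameter piece of $\partial H(e^4_2)$ while $s\in\R^4_{>0}$ carries three effective parameters, and a quantifier-elimination (cylindrical algebraic decomposition) computation checks that the coefficient system is feasible on the whole region, which proves the statement for $\mathcal{S}^{4,2}$. The hard part is the general conjecture: the reduction above works verbatim for every $(n,k)$, but the number of variables and the degrees grow with $n$, so there is no uniform finite computation. To prove it I would try to induct on $n$ by peeling off one index with a Schur complement --- a matrix diagonally congruent to $G(n,k)$ has all of its Schur complements diagonally congruent to $G(n-1,k-1)$ (by the Schur-complement facts in Section~\ref{sec:pre} and identities such as $G(n,k)^{-1}\propto G(n,n-k)$), and one would want to prescribe the missing $n$-th eigenvalue and the new row so that the $n\times n$ extension is again diagonally congruent to $G(n,k)$ with the prescribed spectrum. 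Matching the interlacing inequalities that govern such extensions to the boundary inequalities $e^n_\ell(\lambda)\ge 0$, and disposing of the base cases $k=3$ and $n-k=2$ (where, as for $(4,2)$, a computer-assisted check seems to be needed), is where I expect the real difficulty to lie, and is presumably why the statement is left as a conjecture.
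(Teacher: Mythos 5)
The statement you address is stated in the paper as a \emph{conjecture} and is not proved there; the paper only establishes the special case $(n,k)=(4,2)$ (Theorem~\ref{thm:eigenvalues_s42}) as evidence, and you correctly recognize this. For that case your route is essentially the paper's: you parameterize $DG(4,2)D$ by $s=d^2>0$, obtain its characteristic polynomial as a fixed coefficient-wise rescaling of the characteristic polynomial of $\diag(s)$ (your rank-one-downdate computation via the matrix determinant lemma is equivalent to the paper's observation that $\det\big(G(4,2)|_S\big)$ depends only on $|S|$, which is what makes the coefficients of the two characteristic polynomials proportional), translate the hypotheses on $\lambda$ into sign and discriminant conditions on a quartic, and discharge the resulting first-order sentence by quantifier elimination --- exactly the paper's ``good roots'' versus ``almost-nonnegative roots'' reduction followed by a CAD computation in Mathematica. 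Your closing remarks on Schur-complement induction for general $(n,k)$ are speculative, and you rightly flag them as such: the general conjecture is left open in the paper as well, so there is no proof to match beyond $(4,2)$.
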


As evidence for this conjecture, we will give a computational proof of the following theorem:
\begin{theorem}
    If $\lambda \in H(e_2^4)$ lies on the boundary of the hyperbolicity cone of $e_2^4$ and $\lambda$ has exactly 1 negative entry, then $\lambda$ is an eigenvalue vector of some matrix in $\mathcal{S}^{4,2}$.\label{thm:eigenvalues_s42}
\end{theorem}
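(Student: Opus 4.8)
The plan is to realize $\lambda$ as the spectrum of a matrix of the form $DG(4,2)D$ with $D$ nonsingular diagonal; by Theorem~\ref{thm:struc} (the case $(n,k)=(4,2)$) this is in fact forced, since any $M\in\mathcal{S}^{4,2}$ with spectrum $\lambda$ is automatically locally singular (each $2\times 2$ principal minor is nonnegative and their sum is $e_2^4(\lambda)=0$) and, if $\lambda$ has no zero entry, nonsingular. So first I would dispose of the degenerate configurations. Since $\lambda$ has exactly one negative entry and $e_1^4(\lambda)\ge 0$, at most one entry of $\lambda$ can vanish; if $\lambda=(\mu_1,\mu_2,\mu_3,0)$ with $\mu_1<0$, then $(\mu_1,\mu_2,\mu_3)$ lies on the boundary of $H(e_2^3)=H(e_{n-1}^n)$ for $n=3$ (because $e_1^3(\mu)=e_1^4(\lambda)\ge 0$ and $e_2^3(\mu)=e_2^4(\lambda)=0$), so by Theorem~\ref{thm:k=n-1} there is $N\in\mathcal{S}^{3,2}$ with spectrum $(\mu_1,\mu_2,\mu_3)$, and the block matrix $\left(\begin{smallmatrix}N&0\\0&0\end{smallmatrix}\right)$ lies in $\mathcal{S}^{4,2}$ (its diagonal entries, coming from $N$, are nonnegative) and has spectrum $\lambda$. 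Hence I may assume all entries of $\lambda$ are nonzero: three positive, one negative, and $e_4^4(\lambda)<0$.

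Next I would set up the parametrization. Conjugating by a sign matrix does not change the spectrum, so we may take $D=\diag(\sqrt{a_1},\dots,\sqrt{a_4})$ with all $a_i>0$, and then $DG(4,2)D=2\diag(a_1,\dots,a_4)-ww^{\top}$ with $w=(\sqrt{a_1},\dots,\sqrt{a_4})^{\top}$. Writing $h(t)=\prod_{i=1}^4(t-2a_i)$ and using the matrix determinant lemma together with the identity $\sum_i 2a_i\prod_{j\ne i}(t-2a_j)=t\,h'(t)-4h(t)$, one computes
\[
  \det\!\bigl(tI-DG(4,2)D\bigr)=h(t)\Bigl(1+\sum_{i=1}^4\frac{a_i}{t-2a_i}\Bigr)=\tfrac12\,t\,h'(t)-h(t).
\]
Expanding $h(t)=t^4+c_3t^3+c_2t^2+c_1t+c_0$ gives $\tfrac12 t h'(t)-h(t)=t^4+\tfrac12 c_3 t^3-\tfrac12 c_1 t-c_0$; the vanishing of the $t^2$-coefficient is exactly the boundary condition $e_2^4(\lambda)=0$. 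Matching coefficients against $\prod_i(t-\lambda_i)=t^4-e_1t^3+e_2t^2-e_3t+e_4$ with $e_j=e_j^4(\lambda)$ and $e_2=0$ forces $c_3=-2e_1$, $c_1=2e_3$, $c_0=-e_4$, and leaves $c_2$ as a free parameter. Therefore the theorem reduces to the following: for every $\lambda\in\R^4$ with exactly one negative entry, $e_2^4(\lambda)=0$, and $e_1^4(\lambda)\ge 0$, there exists $c_2\in\R$ such that $h(t)=t^4-2e_1^4(\lambda)\,t^3+c_2\,t^2+2e_3^4(\lambda)\,t-e_4^4(\lambda)$ has four positive real roots (each such root gives an $a_i=\tfrac12(\text{root})>0$, hence a valid $D$).

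This reduced statement is a closed first-order sentence over the reals: quantify $\lambda\in\R^4$, impose the semialgebraic hypotheses, then assert the existence of $c_2\in\R$ and of $r_1,\dots,r_4>0$ with $h(t)=\prod_{i=1}^4(t-r_i)$. By the Tarski--Seidenberg theorem it is decidable, and I would verify that it holds by running a quantifier-elimination / cylindrical-algebraic-decomposition routine (e.g.\ \texttt{QEPCAD}, or Mathematica's \texttt{Resolve}); this completes the proof. The main obstacle is precisely this last step: I do not expect a clean by-hand argument for why a suitable $c_2$ always exists, and, as the introduction indicates, the proof genuinely relies on a symbolic computation; a secondary subtlety is framing the hypotheses so the sentence handed to the solver is both correct and small enough to be tractable. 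One can trim the search by first recording the automatic consequences $e_1^4(\lambda)>0$ and $e_3^4(\lambda)<0$ of the hypotheses (immediate from $(a+b+c)^2\ge 3(ab+bc+ca)$ and $(ab+bc+ca)^2\ge 3abc(a+b+c)$ applied to the three positive entries, after substituting $\lambda_1=-e_2^3/e_1^3$), since Descartes' rule then shows any valid $c_2$ must be positive.
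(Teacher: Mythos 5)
Your proposal follows essentially the same route as the paper: parametrize the characteristic polynomial of $DG(4,2)D$, observe that the boundary condition $e_2^4(\lambda)=0$ kills the $t^2$ coefficient and that three of the four coefficients of the auxiliary quartic (your $h$, the paper's $q$) are then determined by $\lambda$ while one quadratic coefficient is free, and finally invoke quantifier elimination over the reals to verify that a suitable value of the free coefficient always exists. Your identity $\det(tI-DG(4,2)D)=\tfrac12 t h'(t)-h(t)$ with $h(t)=\prod_i(t-2a_i)$ is correct and is the same parametrization (up to a scaling of $D^2$) as the paper's ``almost-nonnegative rooted'' condition.

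Two remarks on where your write-up differs. First, a genuine improvement: you explicitly treat the degenerate case in which $\lambda$ has a zero entry (e.g.\ $\lambda=(-1,2,2,0)$, which is on the boundary with exactly one negative entry), by dropping to $\mathcal{S}^{3,2}$ via Theorem~\ref{thm:k=n-1} and padding with a zero block. The paper's chain of lemmas (Lemma~\ref{lem:good_roots} requires no zero roots, and Lemma~\ref{lem:almost_nonnegative_roots} produces a \emph{nonsingular} matrix) only covers $\lambda$ with all entries nonzero, so this case is a small gap in the paper's argument that your reduction closes. Second, where the paper is doing real work that you defer to the solver: it translates both ``good-rootedness'' and ``almost-nonnegative-rootedness'' into explicit polynomial inequalities in $(a_0,a_1,k)$ (via the discriminant and the Hermite--Sylvester Hankel criterion, Lemmas~\ref{lmma:good_inequalities} and \ref{lmma:almost_inequalities}) precisely so that the quantifier-elimination call is tractable. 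You flag this as the main obstacle, and your auxiliary observations $e_1^4(\lambda)>0$ and $e_3^4(\lambda)<0$ (both correct by the symmetric-function inequalities you cite) are a step in the right direction, but to actually close the proof you would need to do comparable preprocessing; handing the raw sentence with four quantified $\lambda_i$'s and an existential $c_2$ and four existential roots to a CAD engine is unlikely to terminate. Aside from that practical caveat, the argument is sound.
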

We prove this by converting the question into a question about real rooted polynomials. We should think of these as being the characteristic polynomials of certain types of symmetric matrices, and these characteristic polynomials completely characterize their eigenvalues. \change{We defer the proofs of these characterizations to the appendix.}
%\ks{Many of the proofs have been moved to an appendix.}

We say that a univariate polynomial  of degree $4$, $p = a_0 + a_1 x + a_2 x^2 + a_3 x^3 + x^4$, has \textbf{ good roots} if it is real rooted, $a_0 < 0$, $a_2 =0$ and $a_3 \le 0$.

\begin{lemma}\label{lem:good_roots}
\change{A real rooted polynomial} $p$ has good roots if and only if $p$ has no zero roots, exactly one negative root, and the roots of $p$ lie on the boundary of $H(e_2^4)$.
\end{lemma}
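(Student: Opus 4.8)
The plan is to relate the coefficients of $p = a_0 + a_1 x + a_2 x^2 + a_3 x^3 + x^4$ to the elementary symmetric functions of its roots $\lambda = (\lambda_1, \lambda_2, \lambda_3, \lambda_4)$ and then translate each of the three defining conditions of ``good roots'' into a statement about $\lambda$. Writing $p(x) = \prod_{i=1}^4 (x - \lambda_i)$, we have the standard identities $a_3 = -e_1^4(\lambda)$, $a_2 = e_2^4(\lambda)$, $a_1 = -e_3^4(\lambda)$, and $a_0 = e_4^4(\lambda) = \prod_i \lambda_i$. First I would record that $p$ real rooted simply means $\lambda \in \R^4$, which is the ambient assumption in the lemma statement.

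Next I would handle the three conditions one at a time. The condition $a_2 = 0$ is exactly $e_2^4(\lambda) = 0$; combined with $a_3 \le 0$, i.e. $e_1^4(\lambda) \ge 0$, this says precisely that $\lambda$ lies in $H(e_2^4)$ (using the algebraic characterization $H(e_2^4) = \{x : e_1^4(x) \ge 0,\ e_2^4(x) \ge 0\}$ recalled in the Preliminaries) with $e_2^4(\lambda) = 0$, which by Lemma \ref{lem:hyperbolic_boundary} means $\lambda$ is on the boundary of the hyperbolicity cone. So $\{a_2 = 0,\ a_3 \le 0\}$ is equivalent to ``$\lambda \in H(e_2^4)$ and $\lambda$ lies on the boundary.'' It remains to show that, \emph{given} these two conditions, the condition $a_0 < 0$ is equivalent to ``$p$ has no zero roots and exactly one negative root.'' The condition $a_0 < 0$ is $\prod_i \lambda_i < 0$; this immediately forces all $\lambda_i \neq 0$ (no zero roots) and forces an odd number — one or three — of the $\lambda_i$ to be negative. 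The work is to rule out the ``three negative'' case using that $\lambda$ is already on the boundary of $H(e_2^4)$: if three entries were negative and one positive, I would show $e_2^4(\lambda) > 0$ (or derive a contradiction with $e_1^4(\lambda) \ge 0$ together with $e_2^4(\lambda) = 0$), so that in fact exactly one entry is negative. Conversely, if $p$ has no zero root and exactly one negative root then $\prod_i \lambda_i < 0$, giving $a_0 < 0$.

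The main obstacle I anticipate is precisely this sign-counting argument: showing that a vector on the boundary of $H(e_2^4)$ (so $e_1^4 \ge 0$, $e_2^4 = 0$) with no zero coordinates cannot have three negative coordinates. The cleanest route is a short direct computation: assume $\lambda_1, \lambda_2, \lambda_3 < 0 < \lambda_4$, write $\mu_i = -\lambda_i > 0$ for $i \le 3$; then $e_2^4(\lambda) = e_2^3(\mu_1,\mu_2,\mu_3) - \lambda_4 e_1^3(\mu_1,\mu_2,\mu_3)$ while $e_1^4(\lambda) = \lambda_4 - e_1^3(\mu)$, and I would combine $e_2^4(\lambda) = 0$ with $e_1^4(\lambda) \ge 0$ (hence $\lambda_4 \ge e_1^3(\mu) = \mu_1 + \mu_2 + \mu_3$) to force $e_2^3(\mu) \ge (\mu_1+\mu_2+\mu_3)^2 > e_2^3(\mu)$, a contradiction since $(\mu_1+\mu_2+\mu_3)^2 = e_1^3(\mu)^2 \ge e_2^3(\mu)$ already and strict inequality holds for positive $\mu_i$. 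This pins down exactly one negative root and completes the equivalence in both directions.
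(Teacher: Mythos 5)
Your proposal is correct, and the top-level strategy is the same as the paper's: write $a_3 = -e_1^4(\lambda)$, $a_2 = e_2^4(\lambda)$, $a_0 = e_4^4(\lambda)$, use the characterization $H(e_2^4) = \{x : e_1^4(x) \ge 0,\ e_2^4(x)\ge 0\}$ together with Lemma~\ref{lem:hyperbolic_boundary} to translate $\{a_2 = 0,\ a_3 \le 0\}$ into ``$\lambda$ on the boundary of $H(e_2^4)$,'' and then analyze signs to match $a_0<0$ with ``no zero root and exactly one negative root.''

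The one place you diverge is the sign-counting step, and here your argument is more self-contained. The paper's proof asserts two facts about $H(e_2^4)$ without proving them in place: that every vector in the cone has at most two negative entries, and that a vector in the cone cannot have two negative entries together with a zero entry (attributed to interlacing); from these it concludes that $a_0 < 0$ is incompatible with two negatives. You instead use only the parity observation that $a_0 = \prod_i \lambda_i < 0$ forces one or three negative entries (and no zeros), and then rule out the three-negative case directly: with $\mu_i = -\lambda_i > 0$ for $i\le 3$, the constraints $e_1^4(\lambda)\ge 0$ and $e_2^4(\lambda)=0$ give $e_2^3(\mu) = \lambda_4\,e_1^3(\mu) \ge e_1^3(\mu)^2$, contradicting $e_1^3(\mu)^2 = \sum_i \mu_i^2 + 2e_2^3(\mu) > e_2^3(\mu)$ for positive $\mu$. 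This is an elementary computation that avoids invoking unproved structural facts about the cone, at the cost of a few extra lines; both arguments are valid and yield the same equivalence in both directions.
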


We then say that a polynomial $p = a_0 + a_1x + a_2 x^2 + a_3x^3 + x^4$ is \textbf{almost-nonnegative rooted} if there is some $k \in \R$ so that the polynomial $q = \frac{a_0}{-16} + \frac{a_1}{-4}x + kx^2+a_3x^3+x^4$ has nonnegative real roots.

\begin{lemma}\label{lem:almost_nonnegative_roots}
$p$ is almost-nonnegative rooted if and only if there is some nonsingular diagonal matrix $D$, so that $p$ is the characteristic polynomial of $DG(4,2)D$.
\end{lemma}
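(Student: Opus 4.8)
The plan is to pin down exactly which characteristic polynomials arise from matrices of the form $DG(4,2)D$ where $D = \diag(d_1,d_2,d_3,d_4)$ is nonsingular, and to recognize the resulting constraints as precisely the ``almost-nonnegative rooted'' condition. The first step is to compute the characteristic polynomial of $M := DG(4,2)D$ directly. Since $G(4,2)$ has $1$'s on the diagonal and $-1$'s off the diagonal, the entries of $M$ are $M_{ii} = d_i^2$ and $M_{ij} = -d_id_j$ for $i \ne j$. Crucially, the sign pattern of the off-diagonal entries is irrelevant up to diagonal congruence: replacing $d_i \mapsto -d_i$ conjugates $M$ by a $\pm 1$ diagonal matrix and does not change the eigenvalues, so without loss of generality I may take all $d_i > 0$, or even better, set $t_i := d_i^2 > 0$ and observe that every entry of $M$ is determined by the $t_i$ once we fix signs. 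This reduces the problem to a clean statement about the symmetric functions of the $t_i$.

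The second step is to extract the coefficients $c_\ell^4(M) = e_\ell^4(\lambda(M))$ as polynomials in $t_1,\dots,t_4$. Because $M$ is $(4,2)$-locally singular, we already know $c_2^4(M) = 0$, i.e.\ the $x^2$ coefficient of the characteristic polynomial vanishes; this is the ``$a_2 = 0$'' part of the target statement and it matches the shape $q = \frac{a_0}{-16} + \frac{a_1}{-4}x + kx^2 + a_3x^3 + x^4$ only insofar as $q$ is allowed an arbitrary $x^2$ coefficient $k$. The content of the lemma is in relating $a_0, a_1, a_3$ of $p = \det(xI - M)$ to the elementary symmetric functions $\sigma_1, \dots, \sigma_4$ of $t_1,\dots,t_4$: one computes $a_3 = -\tr(M) = -\sigma_1$, $a_0 = \det(-M) = \det(M)$ (up to the sign from degree $4$), and $a_1$ as the appropriate signed sum of $3\times 3$ principal minors. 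A short determinant computation for rank-related structure — each $3\times 3$ principal minor of $M$ is $d_i^2 d_j^2 d_k^2$ times a determinant of a $\pm 1$ matrix, which for $G(3,2)$-type blocks is a fixed constant — should give $c_3^4(M) = \gamma\,\sigma_3(t)$ and $\det(M) = \delta\, \sigma_4(t) = \delta\, t_1t_2t_3t_4$ for explicit constants $\gamma, \delta$ that I expect to be $4$ and $16$ respectively (matching the $\frac{a_0}{-16}$, $\frac{a_1}{-4}$ normalizations in the definition of $q$). Then $q(x) = x^4 - \sigma_1 x^3 + k x^2 - \sigma_3 x + \sigma_4$ with $k$ free is, by Vieta, exactly the set of monic quartics having $t_1,\dots,t_4 \ge 0$ as roots with $\sigma_2$ unconstrained — but $\sigma_2$ of four nonnegative reals is determined only up to the range allowed by the other three symmetric functions, which is precisely why $k$ must be left free. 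Conversely, given $p$ almost-nonnegative rooted, the witnessing $k$ yields $t_i \ge 0$ with $\prod t_i = \frac{a_0}{-16}$, $\sum t_i = -a_3$, $\sigma_3(t) = \frac{a_1}{-4}$, and setting $d_i = \sqrt{t_i}$ reconstructs $M = DG(4,2)D$; checking that the $x^2$ coefficient of $\det(xI - DG(4,2)D)$ is automatically $0$ (it must be, since $DG(4,2)D$ is locally singular) closes the loop regardless of what $\sigma_2(t)$ happens to be.

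The main obstacle I anticipate is the bookkeeping of constants and signs in the normalization — verifying that the factors $-16$ and $-4$ in the definition of $q$ are exactly the $\delta$ and $\gamma$ coming out of the minor computations, and confirming that the $x^2$-coefficient identity $c_2^4(DG(4,2)D) = 0$ holds as a polynomial identity in the $t_i$ (equivalently, that $\sigma_2(t)$ combines with cross terms to cancel, which is forced by local singularity but should also be checked by hand as a sanity check). A secondary subtlety is making sure the ``up to sign of $d_i$'' reduction is airtight: I must note explicitly that every matrix diagonally congruent to $G(4,2)$ via a real nonsingular $D$ has the same eigenvalues as one with $d_i > 0$, so that restricting to $t_i = d_i^2 > 0$ loses nothing. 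Once these normalizations are fixed, the equivalence is just Vieta's formulas plus the observation that $\sigma_2$ is the one symmetric function not appearing among $a_0, a_1, a_3$, hence must be quantified out — which is exactly the $\exists k$ in the definition of almost-nonnegative rooted.
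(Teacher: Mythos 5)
Your plan is essentially the paper's own proof: you factor $\det\bigl((DG(4,2)D)|_S\bigr)=\det(D|_S)^2\det(G(4,2)|_S)$, use the permutation-invariance of $G(4,2)$ so that $\det(G(4,2)|_S)$ depends only on $|S|$ (taking values $1,0,-4,-16$), and match the resulting coefficients against Vieta's formulas for $\prod_i(x-t_i)$ with $t_i=d_i^2$, which is exactly how the paper compares $p$ to the characteristic polynomial of $D^2$. The only slip is a sign: the constants you call $\gamma,\delta$ (defined by $c_3^4(M)=\gamma\,\sigma_3(t)$ and $\det M=\delta\,\sigma_4(t)$) should be $-4$ and $-16$ rather than $4$ and $16$; the extra minus signs are then absorbed in passing to $a_1=-c_3^4(M)$ and $a_0=c_4^4(M)$, which is what makes the $\frac{a_0}{-16}$ and $\frac{a_1}{-4}$ normalizations come out right.
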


Now, Theorem \ref{thm:eigenvalues_s42} is easily seen to be equivalent to the following lemma.

\begin{lemma}\label{lem:good_implies_almost_nonnegative}
A polynomial $p$ has good roots if and only if it is almost-nonnegative rooted.
\end{lemma}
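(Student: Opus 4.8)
The plan is to prove the two implications of the ``if and only if'' separately, using Lemmas~\ref{lem:good_roots} and~\ref{lem:almost_nonnegative_roots} to pass between the algebraic conditions on $p$ and the matrix $G(4,2)$. The direction ``almost-nonnegative rooted $\implies$ good roots'' I would handle with no computation at all; the reverse direction is the computational heart, and I would settle it by quantifier elimination over the reals, as the paper suggests.

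For the easy direction, suppose $p$ is almost-nonnegative rooted. By Lemma~\ref{lem:almost_nonnegative_roots} there is a nonsingular diagonal $D$ with $p$ the characteristic polynomial of $M := DG(4,2)D$. Now $G(4,2) = 2I - \vec{1}\vec{1}^{\top}$ has eigenvalues $-2,2,2,2$, so by Sylvester's law of inertia $M$ has exactly one negative and three positive eigenvalues, and $M$ is nonsingular, so $0$ is not an eigenvalue. All $2\times 2$ principal minors of $G(4,2)$ vanish and this is preserved by diagonal congruence, so $M$ is $(4,2)$-locally singular; combined with $M \in \mathcal{S}^{4,2}$ (whence $\lambda(M) \in H(e_2^4)$) this says the roots of $p$ lie on the boundary of $H(e_2^4)$. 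By Lemma~\ref{lem:good_roots} this is exactly the statement that $p$ has good roots (and in particular forces $a_2 = 0$, which reconciles the two definitions).

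For the converse I would recast everything as a first-order sentence over the real closed field $\R$ and decide it by cylindrical algebraic decomposition. Two elementary observations make the definitions explicitly semialgebraic. First, a real rooted quartic $x^4 + c_3x^3 + c_2x^2 + c_1x + c_0$ has all roots nonnegative iff $(-1)^{4-j}c_j \ge 0$ for all $j$ (a negative real root would keep the polynomial strictly positive on $(-\infty,0]$); applied to $q_k := x^4 + a_3 x^3 + kx^2 - \tfrac{a_1}{4}x - \tfrac{a_0}{16}$ this means $q_k$ has nonnegative roots iff $a_3 \le 0$, $k \ge 0$, $a_1 \ge 0$, $a_0 \le 0$ and $q_k$ is real rooted. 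Second, ``real rooted'' for a quartic is semialgebraic in its coefficients; I would use positive semidefiniteness of the $4\times4$ Hermite (Bezoutian) matrix, whose entries are polynomials in the $c_j$ and which is an exact criterion, rather than the discriminant alone (necessary but not sufficient). With these, ``$p$ has good roots'' becomes a semialgebraic set $\mathcal{G}\subseteq\R^3$ in $(a_0,a_1,a_3)$, ``$p$ is almost-nonnegative rooted'' becomes $\exists k\,\Phi(a_0,a_1,a_3,k)$ for an explicit semialgebraic $\Phi$, and the converse implication is the closed sentence
\[
    \forall (a_0,a_1,a_3)\in\R^3 \ \big(\, \mathcal{G}(a_0,a_1,a_3) \implies \exists k\in\R\ \Phi(a_0,a_1,a_3,k) \,\big),
\]
which by the Tarski--Seidenberg theorem is decidable.

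The hard part will be making the elimination terminate, since quantifier elimination is doubly exponential in the number of variables. I would first use the dilation $x \mapsto \alpha x$ ($\alpha > 0$), which rescales the $a_j$ and lets me normalize one coefficient (say $a_3 = -1$), reducing to two free parameters; and I would record a priori that good roots forces $a_3 < 0$ and $a_1 > 0$. Indeed, if $p$ has roots $\rho_1 < 0 < \rho_2 \le \rho_3 \le \rho_4$ with $e_2^4(\rho) = 0$, then writing $\sigma_i$ for the elementary symmetric functions of $(\rho_2,\rho_3,\rho_4)$ we get $\rho_1 = -\sigma_2/\sigma_1$, and Newton's inequalities $\sigma_1^2 \ge 3\sigma_2$ and $\sigma_2^2 \ge 3\sigma_1\sigma_3$ give $e_1(\rho) = (\sigma_1^2-\sigma_2)/\sigma_1 > 0$ and $e_3(\rho) = (\sigma_1\sigma_3-\sigma_2^2)/\sigma_1 < 0$, i.e. $a_3 < 0$ and $a_1 > 0$. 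These refinements shrink $\mathcal{G}$ and make every coefficient-sign condition in $\Phi$ except $k \ge 0$ automatic, so the surviving sentence is of small enough dimension and degree that CAD (via QEPCAD, Mathematica's \texttt{Resolve}, or Maple) can verify it. A cleaner equivalent formulation of $\Phi$, which may yield a smaller instance, is the existence of positive reals $u_1,\dots,u_4$ with $e_1(u) = -a_3$, $e_3(u) = a_1/4$, $e_4(u) = -a_0/16$; this comes from the direct computation that the characteristic polynomial of $\operatorname{diag}(\sqrt{u})\,G(4,2)\,\operatorname{diag}(\sqrt{u})$ is $x^4 - e_1(u)x^3 + 4e_3(u)x - 16e_4(u)$.
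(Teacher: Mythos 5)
Your proposal is essentially correct and follows the same computational strategy as the paper for the hard direction: reduce ``good-rooted'' and ``almost-nonnegative-rooted'' to explicit polynomial inequalities in the coefficients (the paper does this in Lemmas~\ref{lmma:good_inequalities} and~\ref{lmma:almost_inequalities}), then discharge the resulting first-order sentence by quantifier elimination. There are, however, a few places where you are more careful than the paper, and one where you make a small misstatement.

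First, you give a genuine, non-computational argument for the direction ``almost-nonnegative rooted $\implies$ good roots,'' by translating back to the matrix picture via Lemma~\ref{lem:almost_nonnegative_roots}, invoking Sylvester's law of inertia, local singularity of $DG(4,2)D$, and Lemma~\ref{lem:good_roots}. The paper's proof of the lemma only argues the forward direction (``for all $a_0,a_1$ satisfying the good-root conditions, there exists $k$\dots''), so your treatment fills a gap in the stated ``if and only if,'' though in fairness only the forward direction is used for Theorem~\ref{thm:eigenvalues_s42}.

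Second, you explicitly justify the normalization $a_3=-1$ by a dilation $x\mapsto \alpha x$, and, more importantly, you show a priori that good-rootedness forces $a_3<0$ and $a_1>0$ using Newton's inequalities on the three positive roots together with $e_2^4(\rho)=0$. The paper silently sets $a_3=-1$ in its Lemmas~\ref{lmma:good_inequalities} and~\ref{lmma:almost_inequalities} without remarking that $a_3=0$ is excluded, so your argument is a useful addition.

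Third, a small quibble: your claim to prefer the Hermite criterion ``rather than the discriminant alone (necessary but not sufficient)'' mischaracterizes the paper's Lemma~\ref{lmma:good_inequalities}, which does not use the discriminant alone --- it uses the discriminant together with $a_0<0$, and the two together \emph{are} sufficient (a sign analysis rules out the spurious cases). Either approach is valid; yours is uniform across both lemmas, theirs is slightly leaner for the first.

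One sentence of yours is confusingly phrased: ``a negative real root would keep the polynomial strictly positive on $(-\infty,0]$'' is self-contradictory as written; what you mean is that if the coefficients alternate in sign, then $p(-x)\ge x^4>0$ for $x>0$, so $p$ cannot have a negative root --- i.e.\ Descartes' rule of signs, which you in fact invoke. The substance is right.

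Finally, your alternative formulation of the almost-nonnegative condition as the solvability of $e_1(u)=-a_3$, $e_3(u)=a_1/4$, $e_4(u)=-a_0/16$ in positive $u_1,\dots,u_4$ is equivalent to the paper's existential $k$ (with $k=e_2(u)$); whether it actually speeds up CAD is an implementation question, but the reformulation is correct.
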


\change{We will prove Lemma \ref{lem:good_implies_almost_nonnegative} precisely in the appendix, but sketch the ideas here.
In principle, Lemma \ref{lem:good_implies_almost_nonnegative} is a statement in the first order theory of real closed fields.
That is, it can expressed entirely in terms of universal and existential quantifiers applied to real polynomial inequalities.
Such questions are well known to be answerable algorithmically through quantifier elimination techniques.
The first such algorithm for deciding such statements was found by Tarksi and Seidenberg, and further developments in this field can be found, for example in \cite{bochnak2013real}.
We used the quantifier elimination methods in Mathematica\cite{Mathematica} to solve this problem.

The main technical difficulty in applying these quantifier elimination methods is reducing the number of variables needed to express the inequalities so that the problem becomes tractable on a computer.
For this purpose, we prove a number of polynomial inequalities in the coefficients of a degree 4 univariate polynomial which imply both good-rootedness and almost-real-rootedness in the appendix.
Once these polynomial inequalities have been proven, the problem can be directly solved by a computer.}

\section*{Acknowledgments}
\change{We wish to thank the reviewers for their valuable comments which improved the paper and simplified some proofs.}

\bibliographystyle{plain}
\bibliography{refs}

\begin{thebibliography}{10}

\bibitem{ahmadi2019dsos}
Amir~Ali Ahmadi and Anirudha Majumdar.
\newblock Dsos and sdsos optimization: more tractable alternatives to sum of
  squares and semidefinite optimization.
\newblock {\em SIAM Journal on Applied Algebra and Geometry}, 3(2):193--230,
  2019.

\bibitem{baltean2018selecting}
Radu Baltean-Lugojan, Pierre Bonami, Ruth Misener, and Andrea Tramontani.
\newblock Selecting cutting planes for quadratic semidefinite
  outer-approximation via trained neural networks.
\newblock 2018.

\bibitem{blekherman2020sparse}
Grigoriy Blekherman, Santanu~S Dey, Marco Molinaro, and Shengding Sun.
\newblock Sparse {PSD} approximation of the {PSD} cone.
\newblock {\em Mathematical Programming}, 2020.

\bibitem{convex_algebraic_geometry}
Grigoriy Blekherman, Pablo~A. Parrilo, and Rekha~R. Thomas.
\newblock {\em Semidefinite Optimization and Convex Algebraic Geometry}.
\newblock Society for Industrial and Applied Mathematics, USA, 2012.

\bibitem{bochnak2013real}
Jacek Bochnak, Michel Coste, and Marie-Fran{\c{c}}oise Roy.
\newblock {\em Real algebraic geometry}, volume~36.
\newblock Springer Science \& Business Media, 2013.

\bibitem{boman2005factor}
Erik~G Boman, Doron Chen, Ojas Parekh, and Sivan Toledo.
\newblock On factor width and symmetric h-matrices.
\newblock {\em Linear algebra and its applications}, 405:239--248, 2005.

\bibitem{Branden2014spec}
Petter Br\"{a}nd\'{e}n.
\newblock Hyperbolicity cones of elementary symmetric polynomials are
  spectrahedral.
\newblock {\em Optimization Letters}, page 1773–1782, 2014.

\bibitem{cavalcanti2016quantum}
Daniel Cavalcanti and Paul Skrzypczyk.
\newblock Quantum steering: a review with focus on semidefinite programming.
\newblock {\em Reports on Progress in Physics}, 80(2):024001, 2016.

\bibitem{DeyWorking}
Santanu~S. Dey, Aleksandr Kazachkov, Andrea Lodi, and Gonzalo Munoz.
\newblock Sparse cutting planes for quadratically-constrained quadratic
  programs.
\newblock 2019.

\bibitem{gaarding1951linear}
Lars G{\aa}rding et~al.
\newblock Linear hyperbolic partial differential equations with constant
  coefficients.
\newblock {\em Acta Mathematica}, 85:1--62, 1951.

\bibitem{gouveia2019sums}
Jo{\~a}o Gouveia, Alexander Kova{\v{c}}ec, and Mina Saee.
\newblock On sums of squares of $ k $-nomials.
\newblock {\em arXiv preprint arXiv:1912.01371}, 2019.

\bibitem{graham1990ramsey}
Ronald~L Graham, Bruce~L Rothschild, and Joel~H Spencer.
\newblock {\em Ramsey theory}, volume~20.
\newblock John Wiley \& Sons, 1990.

\bibitem{guler1997hyperbolic}
Osman G{\"u}ler.
\newblock Hyperbolic polynomials and interior point methods for convex
  programming.
\newblock {\em Mathematics of Operations Research}, 22(2):350--377, 1997.

\bibitem{Horn1985matrix}
Roger Horn and Charles Johnson.
\newblock {\em Matrix analysis}.
\newblock Cambridge University Press, 1985.

\bibitem{hurwitz1891ueber}
Adolf Hurwitz.
\newblock About the comparison of the arithmetic and geometric mean.
\newblock {\em Journal for pure and applied mathematics}, 1891(108):266--268,
  1891.

\bibitem{Mathematica}
Wolfram~Research{,} Inc.
\newblock Mathematica, {V}ersion 12.1.
\newblock Champaign, IL, 2020.

\bibitem{kocuk2016strong}
Burak Kocuk, Santanu~S Dey, and X~Andy Sun.
\newblock Strong socp relaxations for the optimal power flow problem.
\newblock {\em Operations Research}, 64(6):1177--1196, 2016.

\bibitem{kocuk2017matrix}
Burak Kocuk, Santanu~S. Dey, and Xu~A. Sun.
\newblock Matrix minor reformulation and socp-based spatial branch-and-cut
  method for the ac optimal power flow problem.
\newblock {\em arXiv preprint arXiv:1703.03050}, 2017.

\bibitem{Khaz}
Kazhgali Kozhasov.
\newblock On eigenvalues of symmetric matrices with psd principal submatrices.
\newblock {\em arXiv preprint arXiv:2103.15811}, 2021.

\bibitem{MR1411115}
Pierre Lalonde.
\newblock A non-commutative version of {J}acobi's equality on the cofactors of
  a matrix.
\newblock {\em Discrete Math.}, 158(1-3):161--172, 1996.

\bibitem{Oeding_2011}
Luke Oeding.
\newblock Set-theoretic defining equations of the variety of principal minors
  of symmetric matrices.
\newblock {\em Algebra \& Number Theory}, 5(1):75–109, Aug 2011.

\bibitem{Permenter2018}
Frank Permenter and Pablo Parrilo.
\newblock Partial facial reduction: simplified, equivalent sdps via
  approximations of the psd cone.
\newblock {\em Mathematical Programming}, 171(1):1--54, Sep 2018.

\bibitem{qualizza2012linear}
Andrea Qualizza, Pietro Belotti, and Fran{\c{c}}ois Margot.
\newblock Linear programming relaxations of quadratically constrained quadratic
  programs.
\newblock In {\em Mixed Integer Nonlinear Programming}, pages 407--426.
  Springer, 2012.

\bibitem{RenegarDer06}
James Renegar.
\newblock Hyperbolic programs, and their derivative relaxations.
\newblock {\em Foundations of Computational Mathematics}, 6:59--79, 2006.

\bibitem{renegar2004hyperbolic}
James Renegar.
\newblock Hyperbolic programs, and their derivative relaxations.
\newblock {\em Foundations of Computational Mathematics}, 283(6):59--79, 2006.

\bibitem{reznick1987quantitative}
Bruce Reznick.
\newblock A quantitative version of hurwitz’theorem on the
  arithmetic-geometric inequality.
\newblock {\em J. reine angew. Math}, 377(108-112), 1987.

\bibitem{reznick1989forms}
Bruce Reznick.
\newblock Forms derived from the arithmetic-geometric inequality.
\newblock {\em Mathematische Annalen}, 283(3):431--464, 1989.

\bibitem{sanyal2011derivative}
Raman Sanyal.
\newblock On the derivative cones of polyhedral cones, 2011.

\bibitem{sojoudi2014exactness}
Somayeh Sojoudi and Javad Lavaei.
\newblock Exactness of semidefinite relaxations for nonlinear optimization
  problems with underlying graph structure.
\newblock {\em SIAM Journal on Optimization}, 24(4):1746--1778, 2014.

\bibitem{thompson1968principal}
RC~Thompson.
\newblock Principal submatrices v: Some results concerning principal
  submatrices of arbitrary matrices.
\newblock {\em J. Res. Nat. Bur. Standards Sect. B}, 72(2):115--125, 1968.

\bibitem{wang2019polyhedral}
Yuzhu Wang, Akihiro Tanaka, and Akiko Yoshise.
\newblock Polyhedral approximations of the semidefinite cone and their
  applications.
\newblock {\em arXiv preprint arXiv:1905.00166}, 2019.

\bibitem{wolkowicz2012handbook}
Henry Wolkowicz, Romesh Saigal, and Lieven Vandenberghe.
\newblock {\em Handbook of semidefinite programming: theory, algorithms, and
  applications}, volume~27.
\newblock Springer Science \& Business Media, 2012.

\bibitem{zhang2006schur}
Fuzhen Zhang.
\newblock {\em The Schur complement and its applications}, volume~4.
\newblock Springer Science \& Business Media, 2006.

\bibitem{zinchenko2008hyperbolicity}
Yuriy Zinchenko.
\newblock On hyperbolicity cones associated with elementary symmetric
  polynomials.
\newblock {\em Optimization Letters}, 2(3):389--402, 2008.

\end{thebibliography}

\appendix
%\ks{This section was moved from section\ref{sec:eigen42}}. 
\section{Proofs of results in Section \ref{sec:eigen42}}

We first prove the characterization of the eigenvalues of matrices diagonally congruent to $G(4,2)$ in terms of characteristic polynomials.

\begin{proof}(of lemma \ref{lem:good_roots})
Suppose that $p(x) = (x-r_1)(x-r_2)(x-r_3)(x-r_4)$, so that the roots of $p$ are $r_1, r_2, r_3, r_4$. 

Note that the condition that $a_2\ge 0, a_3 \le 0$ is equivalent to the condition that $e_2^4(r_1, r_2, r_3, r_4), e_1^4(r_1, r_2, r_3, r_4) \ge 0$. These inequalities are equivalent to the condition that $(r_1, r_2, r_3, r_4) \in H(e_2^4)$~\cite{guler1997hyperbolic}. Once we know that $(r_1, r_2, r_3, r_4) \in H(e^4_2)$, $a_2 = e_2^4(r_1, r_2, r_3, r_4) = 0$ is equivalent to the condition that $(r_1, r_2, r_3, r_4)$ lies on the boundary of the hyperbolicity cone.

Every $(r_1, r_2, r_3, r_4) \in H(e_2^4)$ has at most 2 negative entries, and if there were exactly 2 negative negative entries, then $r_1r_2r_3r_4 > 0$ (it cannot be the case that there are two negative entries and a zero entry by interlacing). Therefore, the condition that $a_0 < 0$ is equivalent to there being at most 1 negative entry in $(r_1, r_2, r_3, r_4)$.
\end{proof}

%\ss{wouldn't describe $G(4,2)$ as "almost-nonnegative rooted", maybe use another expression}

\begin{proof}(of \ref{lem:almost_nonnegative_roots})

Consider the characteristic polynomial of the matrix $DG(4,2)D$. By definition, it is
\[
    p(\lambda) = \det(D G(4,2) D - I \lambda) = \sum_{i = 0}^4 \sum_{S \subseteq [n], |S| = i}(-1)^i \det\big((DG(4,2)D)|_S\big)\lambda^{4-i}.
\]

Now, note that because $D$ is diagonal, 
\[
    \det\big((DG(4,2)D)|_S\big) = \det(D|_S)^2\det(G(4,2)|_S).
\]
Also, because $G(n,k)$ is symmetric with respect to permutations of the coordinates, $\det(G(n,k)|_S)$ only depends on the size of $S$. So, we have that 
\[
p(\lambda) = \sum_{i = 0}^4  \det(G(4,2)|_{[i]})\sum_{S \subseteq [n], |S| = i}(-1)^i \det\big((D)^2|_S\big)\lambda^{4-i}.
\]
Now, we simply compute
\[
\det(G(4,2)|_{\{1\}}) = 1,
\]
\[
\det(G(4,2)|_{\{1,2\}}) = 0,
\]
\[
\det(G(4,2)|_{\{1,2,3\}}) = -4,
\]
\[
\det(G(4,2)|_{\{1,2,3,4\}}) = -16.
\]

Now, consider the polynomial 
\[
    q(\lambda) = \sum_{i = 0}^4  \sum_{S \subseteq [n], |S| = i}(-1)^i \det\big((D)^2|_S\big)\lambda^i = b_0 + b_1 \lambda + b_2\lambda^2 + b_3 \lambda^3 + \lambda^4.
\]
This is equal to the characteristic polynomial of the matrix $D^2$. As $D^2$ is a diagonal matrix with nonnegative real entries, its eigenvalues are nonnegative. Moreover, if $q$ is a polynomial with nonnegative real roots, then there is a diagonal matrix $D$ so that $q$ is its characteristic polynomial.

Finally, note that from our above characterization of the coefficients of $p$,
\[
    p(\lambda) = -16b_0 + -4b_1 \lambda + b_3 \lambda^3 + \lambda^4 .
\]

On the other hand, if $p$ has almost-nonnegative roots, then we can construct the desired $D$ from the roots of $q$, and then $DG(n,k)D$ will have the desired eigenvalues.
\end{proof}

We now prove a number of polynomial inequalities which are equivalent to the good-rooted and almost-real-rooted conditions.
\begin{lemma}\label{lmma:good_inequalities}
$ p = a_0 + a_1 x - x^3 + x^4$ has good roots if and only if $a_0 < 0$, and 
\[
-4 a_1^3 - 27 a_1^4 - 6 a_1^2 a_0 - 27 a_0^2 - 192 a_1 a_0^2 + 256 a_0^3 \ge 0.
\]
\end{lemma}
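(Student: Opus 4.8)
The statement asks us to characterize when the depressed quartic $p = a_0 + a_1 x - x^3 + x^4$ has good roots, i.e. (by Lemma~\ref{lem:good_roots}) is real rooted with no zero root, exactly one negative root, and roots on the boundary of $H(e_2^4)$. Since $p$ already has the form with $a_2 = 0$ (so the boundary condition $e_2^4 = 0$ is automatic) and $a_3 = -1 \le 0$ (so $e_1^4 \ge 0$ is automatic), the only two conditions we need to encode are: (i) $p$ is real rooted, and (ii) $a_0 = r_1 r_2 r_3 r_4 < 0$, which we argued in Lemma~\ref{lem:good_roots} is exactly the condition that among the four real roots there is at most one negative one (given that the point already lies in $H(e_2^4)$, it has at most two negative coordinates, and two negatives would force $a_0 > 0$). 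So the entire content of the lemma is: \emph{a depressed quartic $a_0 + a_1 x - x^3 + x^4$ is real rooted if and only if its discriminant is nonnegative}, together with carrying the side condition $a_0 < 0$ through verbatim.

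The plan is therefore as follows. First, I would invoke the classical fact that a real univariate polynomial of degree $4$ with real coefficients is real rooted (all four roots real, counted with multiplicity) if and only if its discriminant $\Delta$ is $\ge 0$ \emph{and} two auxiliary sign conditions hold (the standard statement: $\Delta \ge 0$, $P \le 0$, and $D \le 0$ in the usual notation, to rule out the two-real-two-complex case which can also have $\Delta > 0$). So the second step is to check that for our specific family — depressed, with leading coefficient $1$, quadratic coefficient $0$, cubic coefficient $-1$, and \emph{with the extra hypothesis} $a_0 < 0$ — those auxiliary conditions are automatically implied by $\Delta \ge 0$, so that they collapse and only $\Delta \ge 0$ survives. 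Concretely, for $x^4 - x^3 + a_1 x + a_0$ one computes the subresidual quantities $P = 8 a_2 - 3 a_3^2 = -3 < 0$ (already negative, handling one case) and $D = 64 a_0 \cdots$ type expressions; with $a_2 = 0$, $a_3 = -1$, and $a_0 < 0$ one should find the relevant auxiliary inequality is forced, so that the two-complex-root branch is excluded purely by sign. The third step is the explicit discriminant computation: plug $a_2 = 0$, $a_3 = -1$ into the general quartic discriminant formula and simplify to obtain $\Delta = -4a_1^3 - 27 a_1^4 - 6 a_1^2 a_0 - 27 a_0^2 - 192 a_1 a_0^2 + 256 a_0^3$, matching the stated expression. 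This is routine but must be done carefully; one can cross-check it against the known cases $e_2^4$-boundary examples like $G(4,2)$ and its diagonal congruences.

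The main obstacle is the second step — verifying that under the extra constraints $a_2 = 0$, $a_3 = -1$, $a_0 < 0$ the full real-rootedness criterion for quartics genuinely reduces to $\Delta \ge 0$ alone, rather than needing to also assert the auxiliary inequalities. A quartic with positive discriminant has either four real roots or zero real roots (two conjugate complex pairs), and one must show the latter cannot occur here. The cleanest route: if $p$ had two complex-conjugate pairs, write $p = (x^2 + bx + c)(x^2 + b'x + c')$ with $c, c' > 0$ (positive products within each complex pair), hence $a_0 = cc' > 0$, contradicting $a_0 < 0$. That single observation, once stated, does all the work and avoids wheeling out the auxiliary subdiscriminants entirely. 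So in the write-up I would (a) argue $a_0 < 0$ forces that if the quartic factors into two real quadratics then exactly one has negative constant term, giving one or three sign changes and hence at least one real root, and combine with the discriminant to pin down ``all real''; more precisely (b) observe $\Delta \ge 0$ for a quartic means the number of non-real roots is $0$ or $4$, and $a_0<0$ rules out $4$ (as $a_0$ would then be a product of two positive numbers). Hence $\Delta \ge 0 \iff$ four real roots, i.e. real-rootedness, completing the proof modulo the mechanical discriminant simplification.
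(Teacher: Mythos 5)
Your overall strategy is the same as the paper's: identify the displayed expression as the discriminant of the quartic, then argue that under $a_0<0$ the discriminant being nonnegative is equivalent to all four roots being real. However, your cleaned-up formulation (b) contains a genuine error. You assert that ``$\Delta \ge 0$ for a quartic means the number of non-real roots is $0$ or $4$,'' but this is only true for $\Delta>0$. When $\Delta=0$ the quartic has a repeated root, and one of the allowed configurations is a real double root together with a single pair of (simple) complex-conjugate roots --- that is, exactly two non-real roots. Since $\Delta=0$ genuinely occurs in this family (for instance, the characteristic polynomial of any scaling of $G(4,2)$ has a triple root), you cannot silently discard this case. Your factorization $p=(x^2+bx+c)(x^2+b'x+c')$ with $c,c'>0$ only covers the two-conjugate-pairs branch, not the double-real-root-plus-complex-pair branch, so as written the argument leaves a hole.

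The fix is short and is exactly what the paper does: handle the boundary case $\Delta=0$ separately by noting that a quartic of the form $(x-r)^2(x^2+b'x+c')$ with non-real conjugate pair forces $c'>0$, so $a_0=r^2c'\ge 0$, again contradicting $a_0<0$. Alternatively, your approach (a) --- factor into two real monic quadratics, observe $a_0<0$ forces exactly one factor to have negative constant term and hence two distinct real roots, and then note that if the other factor had complex roots the quartic would have exactly two simple real roots and two simple non-real roots, forcing $\Delta<0$ --- would also close the gap cleanly; but your sketch of (a) (``one or three sign changes\dots pin down all real'') is too loose to carry the argument on its own, and you then supersede it with the incorrect (b). Either patch suffices; what is not acceptable is the blanket claim that $\Delta\ge 0$ rules out exactly two non-real roots.
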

\begin{proof}
This polynomial $-4 a_1^3 - 27 a_1^4 - 6 a_1^2 a_0 - 27 a_0^2 - 192 a_1 a_0^2 + 256 a_0^3$ is the discriminant of $p$, which is nonnegative if and only if the number of real roots of $p$ is a multiple of $4$, or $p$ has a double root.

If $p$ has 4 nonreal roots, say $r_1, r_2, r_3, r_4$, then they must come in conjugate pairs, so that, say, $r_1 = \bar{r_2}$ and $r_3 = \bar{r_4}$, which would imply that then 
\[
a_0 = r_1r_2r_3r_4 = \change{|r_1|^2|r_3|^2},
\]
is nonnegative, a contradiction.

Similarly, if $p$ has a double root, say $r_3 = r_4$, and a pair of complex conjugate roots, say $r_1 = \bar{r_2}$ then we see that 
\[
    a_0 = r_3^2\change{|r_1|^2} \ge 0,
\]
which is a contradiction.
\end{proof} 

\begin{lemma}\label{lmma:almost_inequalities}
$ p = a_0 + a_1 x - x^3 + x^4$ has almost-nonnegative roots if and only if $a_0 < 0$, $a_1 < 0$, and there is $k > 0$, so that \change{the }following 4 inequalities are satisfied:

\begin{align*}
    %Equation 1
    \Big(-\frac{27 a_1^4}{256}-\frac{9 a_1^3 k}{32}+\frac{a_1^3}{16}-\frac{9}{16} a_1^2 a_0 k+\frac{3 a_1^2 a_0}{128}-\frac{a_1^2 k^3}{4}+\frac{a_1^2 k^2}{16}+\frac{3 a_1 a_0^2}{16}\\
    -\frac{5}{4} a_1 a_0 k^2+\frac{9 a_1 a_0 k}{32}-\frac{a_0^3}{16}-\frac{a_0^2 k^2}{2}+\frac{9 a_0^2 k}{16}
    -\frac{27 a_0^2}{256}-a_0 k^4+\frac{a_0 k^3}{4}\Big)\ge 0,\\\\
    %Equation 2
    \Big(\frac{27 a_1^4}{256}+\frac{9 a_1^3 k}{32}+\frac{a_1^3}{8}+\frac{45}{128} a_1^2 a_0 k-\frac{9 a_1^2 a_0}{128}+\frac{a_1^2 k^3}{4}+
    \frac{45 a_1^2 k^2}{16}+a_1^2 k+\\
    \frac{37 a_1^2}{8}-\frac{9 a_1 a_0^2}{128}+\frac{11}{16} a_1 a_0 k^2-\frac{43 a_1 a_0 k}{32}-\frac{53 a_1 a_0}{32}+9 a_1 k^3+\frac{27 a_1 k}{2}-\\
    3 a_1+\frac{a_0^3}{64}+\frac{3 a_0^2 k^2}{16}+ \frac{23 a_0^2 k}{128}+\frac{77 a_0^2}{256}+\frac{a_0 k^4}{2}+\frac{27 a_0 k^3}{8}-
    3 a_0 k^2+ \frac{3 a_0 k}{8}-\\
    \frac{3 a_0}{8}+8 k^5-2 k^4+16 k^3-4 k^2\Big)\le 0,\\\\
    % Equation 3
    \Big(-\frac{3 a_1^3}{16}+\frac{3 a_1^2 a_0}{64}-\frac{19 a_1^2 k^2}{16}-a_1^2 k-\frac{17 a_1^2}{16}+ \frac{11 a_1 a_0 k}{32}+\frac{17 a_1 a_0}{32}-\\
    \frac{9 a_1 k^3}{2}-a_1 k^2+3 a_1 k-\frac{15 a_1}{2}-
    \frac{3 a_0^2 k}{32}-\frac{17 a_0^2}{256}-\frac{5 a_0 k^3}{4}+\frac{9 a_0 k^2}{8}-\\
    \frac{11 a_0 k}{8}+\frac{21 a_0}{8}-4 k^5+k^4-16 k^3+33 k^2-38 k+9\Big)\ge 0,\\\\
    % Equation 4
    \left(-\frac{3 a_1^2}{16}-3 a_1 k+\frac{5 a_1}{2}+\frac{3 a_0 k}{8}-\frac{5 a_0}{8}+2 k^3-11 k^2+12 k-7\right)\le 0.\\
\end{align*}
\end{lemma}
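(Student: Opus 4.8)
\medskip
\noindent\textbf{Proof plan.}
The idea is to peel off the definitional layers until the statement becomes a purely algebraic question about one explicit one-parameter family of quartics, and then apply a standard real-root classification. Unwinding the definition of almost-nonnegative rootedness (and using the normalization $a_3 = -1$ already built into $p = a_0 + a_1 x - x^3 + x^4$), $p$ is almost-nonnegative rooted if and only if there is a $k \in \R$ for which the monic quartic
\[
    q_k(x) \;=\; x^4 - x^3 + k x^2 - \frac{a_1}{4}\,x - \frac{a_0}{16}
\]
has four real roots, all of them nonnegative. First I would trade ``four nonnegative real roots'' for ``four real roots'' at the cost of sign conditions. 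Since the coefficients of a monic quartic are $\pm$ the elementary symmetric functions of its roots, if $q_k$ has four nonnegative roots (which, in the identification of Lemma~\ref{lem:almost_nonnegative_roots} with $DG(4,2)D$ for a nonsingular diagonal $D$, are in fact positive) then its coefficients alternate in sign, which is exactly the sign information on $a_0$, $a_1$, $k$ asserted in the statement. Conversely, once those sign conditions hold, every monomial of $q_k(x)$ is nonnegative for $x \le 0$ and the constant term is strictly positive, so $q_k$ has no root $\le 0$; hence for such $(a_0, a_1, k)$ the condition ``$q_k$ has four real roots'' already forces ``$q_k$ has four positive roots.'' So the lemma reduces to: under these sign conditions, describe by polynomial inequalities in $(a_0, a_1, k)$ exactly when $q_k$ has four real roots.

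For that last step I would invoke the classical semialgebraic description of hyperbolicity (all roots real, counted with multiplicity) of a univariate polynomial: $q_k$ has four real roots if and only if Hermite's quadratic form of $q_k$ --- the Hankel matrix formed from the power sums of its roots --- is positive semidefinite, equivalently if and only if the discriminant of $q_k$ together with the lower principal subresultant coefficients $\mathrm{sRes}_j(q_k, q_k')$ satisfy the appropriate sign pattern. Because $q_k$ is written out explicitly with coefficients affine in $(a_0, a_1, k)$, each of these invariants is a concrete polynomial in $(a_0, a_1, k)$ (the top one being essentially the discriminant of $q_k$, computed just as the discriminant of $p$ was in the proof of Lemma~\ref{lmma:good_inequalities}, the others of smaller degree); carrying out this symbolic computation and clearing denominators produces precisely the four displayed expressions, with the alternating ``$\ge 0$'', ``$\le 0$'' pattern reflecting the standard sign normalization of the subresultant (Sturm--Habicht) sequence. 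Assembling the pieces then gives the stated equivalence: $p$ is almost-nonnegative rooted exactly when $a_0 < 0$, $a_1$ has the sign recorded in the statement, and there is some $k > 0$ satisfying all four inequalities.

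The step I expect to be the main obstacle is not the symbolic subresultant computation (routine for a computer algebra system, and the same kind the paper carries out elsewhere) but making the final equivalence an honest ``if and only if'' across the degenerate strata where $q_k$ has a repeated root --- most conspicuously when $D$ is a scalar multiple of the identity, so that $DG(4,2)D$ is a multiple of $G(4,2)$ and $q_k$ acquires a triple root. There the reduced Sturm sequence of $q_k$ drops in degree and real-rootedness cannot be read off naively, which is exactly why I would phrase the criterion through the subresultant / Hermite-form formulation: it varies continuously across these strata and detects positive \emph{semi}definiteness, so allowing equality in the four inequalities should capture precisely these boundary cases. A secondary bookkeeping concern is tracking sign conventions carefully enough that ``all roots real'' corresponds to exactly the displayed directions of the four inequalities.
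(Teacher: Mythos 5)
Your plan tracks the paper's proof closely: both reduce the question to a semialgebraic description of when the auxiliary quartic $q_k$ is real-rooted, invoke the Hermite--Sylvester criterion (positive semidefiniteness of the $4\times 4$ Hankel matrix of power sums, computed via Newton's identities), and read off the four displayed inequalities as the alternating-sign conditions on the coefficients of that Hankel matrix's characteristic polynomial. Your instinct to prefer the Hermite/subresultant formulation over a naive Sturm count so as to handle the degenerate strata is precisely what the paper's weak inequalities already accomplish --- positive \emph{semi}definiteness absorbs the repeated-root boundary automatically --- so there is no genuine divergence in method.

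There is, however, a concrete gap in the step where you convert ``four real roots'' into ``four nonnegative roots.'' You assert that with $a_0 < 0$, $a_1 < 0$, $k > 0$, ``every monomial of $q_k(x)$ is nonnegative for $x \le 0$.'' But the linear monomial of $q_k$ is $-\tfrac{a_1}{4}\,x$, and when $a_1 < 0$ its coefficient $-\tfrac{a_1}{4}$ is \emph{positive}, so that monomial is $\le 0$ on $x \le 0$; your conclusion that $q_k$ has no nonpositive root therefore does not follow from the stated sign conditions. The paper's proof exhibits the same defect: it asserts that the coefficients of $q$ alternate in sign under $a_0<0$, $a_1<0$, $k>0$, but the actual pattern is $+,+,+,-,+$. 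A direct computation $a_1 = -c_3^4\bigl(DG(4,2)D\bigr) = -\det\bigl(G(4,2)|_{[3]}\bigr)\,c_3^4(D^2) = 4\,c_3^4(D^2) > 0$ for nonsingular diagonal $D$ shows the correct sign condition is $a_1 > 0$, under which both your monomial-positivity argument and the paper's Descartes argument become valid. So the issue appears to originate in a sign typo in the lemma statement that you inherited, but as written your reduction step does not close, and you should verify the sign of $a_1$ rather than take it on faith from the statement.
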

\begin{proof}
The classical results that we need about real rooted univariate polynomials, such as the Newton identities and the Hermite-Sylvester conditions can be found at \cite[Section 3.1]{convex_algebraic_geometry}.

If we have the sign conditions on the coefficients, $a_0 < 0$, $a_1 < 0$, $k > 0$, then the polynomial 
$q = \frac{a_0}{-16} + \frac{a_1}{-4}x + kx^2 - x^3+x^4$ has coefficients which alternate in sign. If $q$ is real rooted, then we can apply Descartes' rule of signs to conclude that $q$ has nonnegative real roots.

The remaining inequalities cut out the space of real-rooted polynomials. This follows from the Hermite-Sylvester criterion for the polynomial having real roots. It states that if we let $m_k = \sum_{i=1}^4 r_i^k$, where $r_1, r_2, r_3, r_4$ are the roots of $q$, then $p$ has nonnegative real roots if and only if the $4\times 4$ matrix $M$ given by
\[
M_{ij} = m_{i+j}.
\]
is positive semidefinite.

We can then use the Newton identities
to determine the $m_k$ in terms of $a_0$, $a_1$ and $k$.

Once $M$ has been computed, the 4 polynomials above are the 4 coefficients of the characteristic polynomial of $M$. $M$ being positive semidefinite is equivalent to these 4 polynomials alternating in sign, which results in the four inequalities listed.
\end{proof} 

\begin{proof}(Of lemma \ref{lem:good_implies_almost_nonnegative})

We want to show that for all $a_0$ and $a_1$ satisfying the conditions of \ref{lmma:good_inequalities}, there exists $k$ satisfying the conditions of \ref{lmma:almost_inequalities}.

We are now at the point where we can directly apply any quantifier elimination algorithm to solve this problem, say the one included in Mathematica\cite{Mathematica}.
The results of this computation show that the lemma holds.
\end{proof}

\end{document}